\documentclass[12pt]{article}
\usepackage[utf8]{inputenc}

\usepackage[left=2.5cm,right=2.5cm, top=2.5cm,bottom=2.5cm,bindingoffset=0cm]{geometry}

\usepackage{tikz, tikz-cd}
\usetikzlibrary{calc,intersections,through,backgrounds}
\usetikzlibrary{angles,quotes,decorations.markings}

\usepackage{amsmath, amssymb}
\usepackage{amsthm, mathtools, hyperref}

\usepackage[export]{adjustbox}

\mathtoolsset{showonlyrefs}

\usepackage[sort&compress]{natbib}

\usepackage{float}

\newtheorem{lemma}{Lemma}[section]
\newtheorem{proposition}[lemma]{Proposition}
\newtheorem{theorem}[lemma]{Theorem}
\newtheorem{corollary}[lemma]{Corollary}
\newtheorem{problem}[lemma]{Problem}

\theoremstyle{definition}
\newtheorem{remark}[lemma]{Remark}
\newtheorem{definition}[lemma]{Definition}

\usepackage{subcaption}

\usetikzlibrary{arrows.meta}

\usetikzlibrary{positioning}
\usetikzlibrary{decorations.text}
\usetikzlibrary{decorations.pathmorphing}
\usetikzlibrary{decorations.markings}

\usepackage{tkz-euclide}

\usepackage{caption}
\usepackage[nottoc]{tocbibind}

\newcommand{\per}{\mathbf T}
\newcommand{\hp}{{T_1}}
\newcommand{\vp}{{T_2}}

\newcommand{\C}{\mathbb {C}}
\newcommand{\CP}{\mathbb {CP}}
\newcommand{\Z}{\mathbb Z}

\renewcommand{\S}{{{\mathcal S}}}

\newcommand{\PGL}{\mathrm {PGL}}
\newcommand{\GL}{\mathrm {GL}}
\newcommand{\Aff}{\mathrm {Aff}}

\newcommand{\tr}{\mathrm{tr}\,}
\newcommand{\eps}{\varepsilon}
\newcommand{\proj}{ {P}}
\newcommand{\id}{\mathbf{1}}

\newcommand{\FG}{G}
\newcommand{\Mat}{\mathrm{Mat}}
\graphicspath{{figs//}}

\title{{Integrability of Cauchy problems for discrete conformal maps and circle patterns}}
\author{Maxim Arnold\thanks{Department of Mathematical Sciences, University of Texas at Dallas, e-mail: \tt{Maxim.Arnold@utdallas.edu}}~
and Anton Izosimov\thanks{School of Mathematics and Statistics, University of Glasgow, e-mail: \tt{Anton.Izosimov@glasgow.ac.uk}}}
\date{}

\begin{document}

\maketitle
\vspace{-7mm}
\abstract{
A map from a square lattice to the Riemann sphere is called discrete conformal if the image of every elementary square is a harmonic quadrilateral. We prove that the initial value problem for discrete conformal maps with quasi-periodic boundary conditions is Liouville integrable. We also show that the image of the embedding of Schramm's orthogonal square grid circle patterns into the space of discrete conformal maps is the real part of a symplectic leaf. As a consequence, we obtain the integrability of the corresponding Cauchy problem for circle patterns.
}

\tableofcontents

\section{Introduction}

Discretization of classical complex analysis is one of the central problems in discrete differential geometry. While there exist different approaches to constructing such discretizations \cite{bobenko2005linear}, in this paper we are concerned with the so-called \textit{nonlinear theory} of discrete conformal maps.

A map \(f \colon \mathbb{Z}^2 \to \mathbb{CP}^1\) is called \textit{discrete conformal} if the image of every elementary lattice square is a harmonic quadrilateral. In other words, \(f\) is discrete conformal if it satisfies the \textit{cross-ratio equation}
\begin{equation}\label{eq:CRE}
[f_{i,j}, f_{i+1,j}, f_{i+1,j+1}, f_{i,j+1}] = -1
\quad \text{for all } i,j \in \mathbb{Z},
\end{equation}
where
\begin{equation}\label{eq:cross-ratio}
[a,b,c,d] := \frac{(a-b)(c-d)}{(b-c)(d-a)}.
\end{equation}
is the cross-ratio (here and below we use index notation for maps defined on \(\mathbb{Z}^d\)). The motivation for this definition is that, in the continuous setting, conformal maps can be characterized as those that map infinitesimal squares to harmonic quadrilaterals.

The cross-ratio equation \eqref{eq:CRE} first appears in~\cite{nijhoff1995discrete} in connection with the discrete KdV equation. In~\cite{bobenko1996discrete}, maps satisfying the cross-ratio equations arise in the context of discrete isothermic surfaces and are called \textit{discrete holomorphic functions}. The name \textit{discrete conformal maps} appears in~\cite{bobenko1999discrete}, where such maps are related to Schramm's \textit{square grid circle patterns} \cite{schramm1997circle}. The latter can themselves be regarded as discretizations of conformal maps and are discussed in detail below.

The topic of the present paper is the \emph{integrability} of the
cross-ratio equation. It is well known that the cross-ratio equation is
\emph{3D consistent} \cite{bobenko2002integrable} and therefore admits a
zero-curvature formulation, which is often taken as a definition of
integrability. Here, however, we are concerned with the integrability of
the \emph{initial value problem} in the Arnold--Liouville sense.

The natural way to pose an initial value problem for the cross-ratio
equation is to prescribe initial data on a \emph{zigzag}
\cite{adler2004cauchy}, that is, a lattice path in \(\mathbb{Z}^2\) whose
projections onto both coordinate axes are bijections.
The initial value problem on a zigzag is well posed: for a full-measure
subset of initial data prescribed on \(\zeta\), there exists a unique
discrete conformal map extending these data to the entire lattice
\(\mathbb Z^2\).

In the present paper we consider quasi-periodic initial data. Let
\[
\per=(\hp,\vp)\in\mathbb Z^2\setminus\{0\},
\qquad
n:=|\hp|+|\vp|.
\]
We assume that the zigzag is \(\per\)-periodic, that is,
\[
\zeta_{i+n}-\zeta_i=\per,
\qquad i\in\mathbb Z.
\]
See Figure~\ref{fig:zig}. Such zigzags exist if and only if both
\(\hp\) and \(\vp\) are nonzero. Since the cross-ratio equation is invariant
under reversing the coordinate directions, we may assume without loss of
generality that
$
\hp,\vp>0,
$
or equivalently, \(\per\in\mathbb Z_{>0}^2\).

\begin{figure}[htb]
\centering

\begin{tikzpicture}
\draw[step=0.5cm,color=gray] (0,0) grid (3,2);
\draw [very thick] (0,0) -- (0,0.5) -- (1,0.5) -- (1,1) -- (1.5,1);
\begin{scope}[shift={(1.5,1)}]
    \draw [very thick] (0,0) -- (0,0.5) -- (1,0.5) -- (1,1) -- (1.5,1);
\end{scope}
\end{tikzpicture}
\caption{A $(3,2)$-periodic zigzag.}
\label{fig:zig}
\end{figure}

Initial data for a discrete conformal map \(f\) on a zigzag \(\zeta\)
are said to be \emph{quasi-periodic} with period \(\per\) and \emph{monodromy}
\(\Delta\in\mathbb C\) if
\begin{equation}\label{eq:QP}
f_{i+\hp,j+\vp}=f_{i,j}+\Delta
\end{equation}
for every lattice point \((i,j)\in\zeta(\mathbb Z)\). Since the
cross-ratio equation is translation invariant, every discrete conformal
map with quasi-periodic initial data is itself quasi-periodic, in the
sense that \eqref{eq:QP} holds for all \((i,j)\in\mathbb Z^2\).

Quasi-periodic discrete conformal maps with period
\(\per=(\hp,\vp)\) and monodromy \(\Delta\) may be viewed as discrete
conformal maps
\[
\mathbb Z^2/\mathbb Z\per
\longrightarrow
\mathbb C/\mathbb Z\Delta,
\]
from the discrete cylinder to the complex cylinder.


The initial value problem for quasi-periodic discrete conformal maps can
now be formulated as follows. Restricting a quasi-periodic discrete
conformal map with period \(\per=(\hp,\vp) \in \Z^2_{>0}\) to a
\(\per\)-periodic zigzag yields a function
$
p\colon\mathbb Z\to\mathbb C
$
satisfying
\[
p_{i+n}=p_i+\Delta,
\qquad
n:=T_1 + T_2.
\]
We refer to such functions as \emph{quasi-periodic \(n\)-gons}. Equivalently,
they are \emph{twisted polygons} whose monodromy is a translation; see, for
example, \cite{arnold2022cross} for the general definition of twisted
polygons. We denote by
$
\mathcal P_n
$
the space of quasi-periodic \(n\)-gons with arbitrary translational monodromy~\(\Delta\).

Thus, solving the Cauchy problem amounts to recovering a
quasi-periodic discrete conformal map from its restriction
\[
 p = f|_\zeta\in\mathcal P_n
\]
to a \(\per\)-periodic zigzag. This problem admits a unique solution for
a full-measure subset of initial data. Explicitly, the solution is
obtained by iterating the birational \emph{solution map}
\[
\mathcal S\colon\mathcal P_n\dashrightarrow\mathcal P_n,
\]
which assigns to the initial data on a zigzag the corresponding initial
data on the adjacent parallel zigzag; see
Figure~\ref{fig:pzig}. Iterating \(\mathcal S\) recovers the values of
the discrete conformal map at every lattice point. Accordingly,
throughout this paper, integrability of the Cauchy problem means
integrability of the birational map \(\mathcal S\).

The space \(\mathcal P_n\) of quasi-periodic \(n\)-gons carries a
diagonal action of the affine group
\[
\Aff(\mathbb C)
=
\{\,z\mapsto az+b\mid a\in\mathbb C^\times,\ b\in\mathbb C\,\},
\]
given by
\[
(p_i) \mapsto (ap_i + b).
\]
It is therefore natural to consider the
quotient
$
\mathcal P_n/\Aff(\mathbb C),
$
and hence to regard the solution map
\(\mathcal S\) as acting on this quotient.

\begin{figure}[!htb]
\centering
\begin{tikzpicture}
\draw[step=0.5cm,color=gray] (-0.5,-0.5) grid (3.5,2.5);
\draw [very thick](-0.5,-0.5) --  (-0.5,0) -- (0,0) -- (0,0.5) -- (1,0.5) -- (1,1) -- (1.5,1);
\begin{scope}[shift={(1.5,1)}]
    \draw [very thick] (0,0) -- (0,0.5) -- (1,0.5) -- (1,1) -- (1.5,1) --  (1.5,1.5) -- (2,1.5);
\end{scope}
\begin{scope}[shift={(0.5,-0.5)}]
\draw [very thick] (-0.5,0) -- (0,0) -- (0,0.5) -- (1,0.5) -- (1,1) -- (1.5,1);
\begin{scope}[shift={(1.5,1)}]
    \draw [very thick] (0,0) -- (0,0.5) -- (1,0.5) -- (1,1) -- (1.5,1) -- (1.5,1.5);
\end{scope}
\end{scope}
\end{tikzpicture}
\caption{Adjacent parallel zigzags.}
\label{fig:pzig}
\end{figure}

\begin{theorem}\label{thm1}
For every \(\per = (\hp, \vp)\in\mathbb Z_{>0}^2\), the Cauchy problem for
quasi-periodic discrete conformal maps with period \(\per\) is
Arnold--Liouville integrable. Equivalently, letting \(n=\hp + \vp\), the
corresponding initial data space
$
\mathcal P_n/\Aff(\mathbb C)
$
carries a Poisson structure invariant under the solution map
\(\mathcal S\), together with a maximal collection of functionally
independent Poisson-commuting functions preserved by \(\mathcal S\).
\end{theorem}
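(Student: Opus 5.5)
We will realize \(\mathcal S\) as a cluster-type map, or equivalently as a refactorization map on a Poisson–Lie group, and extract invariants from a spectral curve. We begin by passing to edge variables. For a zigzag \(\zeta\), set \(z_i = p_{i+1}-p_i\). Then \((z_1,\dots,z_n)\) with \(z_{i+n}=z_i\) parametrizes \(\mathcal P_n\) modulo translations, and dividing by the scaling action turns \(\mathcal P_n/\Aff(\mathbb C)\) into \(\mathbb P^{n-1}\), or into ratios of the \(z_i\). Each elementary square adjacent to \(\zeta\) has its fourth vertex determined by the cross-ratio equation \eqref{eq:CRE}. Rewritten in edge variables, this is the familiar discrete KdV / Hirota-type local move, which updates a pair of consecutive edges \((z_i,z_{i+1})\) at a corner of \(\zeta\). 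Hence \(\mathcal S\) is a composition of local birational moves, one at each corner of the zigzag where the flip happens.

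The next step is a zero-curvature (Lax) representation, which comes from 3D consistency \cite{bobenko2002integrable}. Attach to a horizontal edge with difference \(z\) the matrix \(L(z,\lambda)=\begin{pmatrix}1 & z\\ \lambda/z & 1\end{pmatrix}\), and to a vertical edge an analogous matrix involving the other lattice parameter; for \eqref{eq:CRE} with cross-ratio \(-1\), the two parameters differ only by a sign. The local move is then equivalent to the refactorization \(L(z_i,\lambda)L'(z_{i+1},\lambda)=L'(\tilde z_{i+1},\lambda)L(\tilde z_i,\lambda)\). Consequently the monodromy \(M(\lambda)=\prod_{i=1}^n L_i(\lambda)\) along one period of \(\zeta\) changes under \(\mathcal S\) only by conjugation, possibly composed with a cyclic shift of indices. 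It follows that the coefficients of \(\tr M(\lambda)\), made scale-invariant by normalizing with appropriate powers of \(\prod z_i\) or by taking ratios, are \(\mathcal S\)-invariant functions on \(\mathcal P_n/\Aff(\mathbb C)\).

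For the Poisson structure, we would place the quadratic Sklyanin bracket on each \(L\)-factor; in edge variables this becomes a log-canonical bracket \(\{z_i,z_j\}=\epsilon_{ij}z_iz_j\), where \(\epsilon\) is read off from a bipartite graph / quiver on the cylinder determined by the zigzag. An \(r\)-matrix argument, or equivalently the fact that each local move is a cluster mutation of this quiver (as in the Goncharov–Kenyon and Gekhtman–Shapiro–Vainshtein frameworks), shows that refactorization is Poisson. Hence \(\mathcal S\) is Poisson, and the spectral invariants Poisson-commute. The bracket is invariant under scaling, so it descends to the quotient by \(\Aff(\mathbb C)\).

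Finally we count, and this is the main obstacle. We must determine the rank of the bracket, that is, identify its Casimirs (among them the monodromy \(\Delta\)-related quantities and the products of \(z_i\) over alternating classes), and then verify that the number of independent nonconstant, non-Casimir spectral invariants equals half that rank. We would do this by computing the Newton polygon of \(\det(M(\lambda)-\mu)\), whose interior points should give the Hamiltonians and whose boundary should give the Casimirs, as \(\gcd(T_1,T_2)\) and similar data enter. Independence is then checked at a single convenient point, for instance a degenerate point where \(M\) becomes nearly triangular and the leading monomials of the invariants are visibly distinct. Cases with \(\gcd(T_1,T_2)>1\) may need separate handling of the Casimirs.
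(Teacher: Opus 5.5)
Your overall architecture matches the paper's. Your Lax matrices are, up to transposition and $t=\lambda^{-1}$, the paper's $Z_i(t)$. The refactorization is flatness of the connection around a square. The coefficients of $\tr\mathbf Z(t)$ are invariant, and they are automatically scale invariant, because $z\mapsto az$ acts by conjugation with a constant diagonal matrix, so no normalization is needed. An $r$-matrix argument then gives involutivity.

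The gap is that everything which turns this into Arnold--Liouville integrability is left as a plan. First, you never specify the Poisson structure on $\mathcal P_n/\Aff(\C)$, and the recipe ``put the Sklyanin bracket on each factor and read it in edge variables'' does not work literally. The unit-diagonal factors do not form a Poisson submanifold: for factors $\begin{pmatrix}a&b\lambda^{-1}\\ c&d\end{pmatrix}$ one has $\{a,c\}=\tfrac12ac\neq0$, so $a\equiv1$ is not preserved. The paper instead proceeds as follows. It takes general factors subject to the Casimir constraint $bc/(ad)=\pm1$ and reduces by the diagonal gauge group $(\C^\times)^{2n}$. This yields a bracket on the gauge invariants $y_i=z_i/z_{i-1}$, plus a Casimir $\rho$ that is then set to $1$. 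The result is the cyclic nearest-neighbour bracket $\{y_i,y_{i+1}\}=y_iy_{i+1}$, which does not depend on the zigzag. That $\mathcal S$ preserves it is then a direct check on each folding $F_j$.

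Second, the count and the independence, which you yourself call the main obstacle, are not carried out, and your predictions about them are partly wrong.
\begin{itemize}
\item The number of invariants comes from the degree bound $\deg\tr\mathbf Z(t)\le\lfloor n/2\rfloor$ together with $I_0=2$.
\item For the bracket above, on $y_1\cdots y_n=1$ the structure is nondegenerate for odd $n$. For even $n$ it has a single Casimir $y_1y_3\cdots y_{n-1}$, of which the top coefficient $I_{n/2}$ is a function. Hence $\tfrac12\operatorname{rank}+\operatorname{corank}=\lfloor n/2\rfloor$ in all cases.
\item Only the parity of $n$ matters; $\gcd(\hp,\vp)$ plays no role.
\item $\Delta$ is not a function on the quotient. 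Moreover $\Delta\Delta^*=I_1+\hp-\vp$ is a genuine Hamiltonian for $n\ge3$, not a Casimir.
\item In Newton-polygon language, $\det(M-\mu)=\mu^2-\tr M\,\mu+\det M$ with $\det M$ a fixed polynomial. So the bottom edge carries constants, not Casimirs.
\end{itemize}
Finally, ``check independence at one convenient point'' is only asserted. You would need to exhibit such a point and verify full rank of the Jacobian for every $n$ and every sign pattern $\eps$. The paper avoids this with a dominance argument. It factors $z\mapsto(I_1,\dots,I_{\lfloor n/2\rfloor})$ through the variety $\mathcal Z_n$ of matrix polynomials with the prescribed degree bounds, unitriangular at $t=0$ and with fixed determinant. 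The trace map $\mathcal Z_n\to\C^{\lfloor n/2\rfloor}$ is surjective by an explicit choice of entries. The product map is dominant because a generic element of $\mathcal Z_n$ can be factored by inductively peeling off $Z_n(t)$ using $\ker\mathbf Z(\eps_n)$.
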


\begin{remark}
The solution map \(\mathcal S\) depends on the choice of a
\(\per\)-periodic zigzag \(\zeta\). However, if \(\zeta\) and
\(\zeta'\) are two such zigzags, then the corresponding solution maps
are birationally conjugate via the birational identification of the
associated spaces of initial data induced by the cross-ratio equation.
Consequently, Arnold--Liouville integrability is independent of the
choice of zigzag.
\end{remark}

\begin{remark}
Integrability of initial value problems for discrete conformal maps with horizontal
periods, $\per = (\hp, 0)$, 
was studied in
\cite{hetrich2001periodic,arnold2022cross}. In contrast to the
 setting considered here, the corresponding initial value
problem is not well posed: it gives rise to an algebraic correspondence
rather than a birational map.
\end{remark}
\begin{remark}Functions on the space
$
\mathcal P_n/\Aff(\mathbb C)
$
that are invariant under the solution map \(\mathcal S\) have been
known previously. They arise from the zero-curvature representation of
\(\mathcal S\), which in turn is a consequence of the
three-dimensional consistency of the cross-ratio equation
\cite{bobenko2002integrable}. What we prove here is that these invariants are algebraically independent and
Poisson commute with respect to a natural Poisson structure on
\(\mathcal P_n/\Aff(\mathbb C)\). This establishes 
Arnold--Liouville integrability of the Cauchy problem.
\end{remark}





Next, we apply these results to the study of Schramm's orthogonal
square-grid circle patterns~\cite{schramm1997circle}. By definition,
such a pattern consists of two circle packings in the complex plane,
each with the combinatorics of the square grid, that intersect
orthogonally; see Figure~\ref{fig:schramm}.

\begin{figure}[H]
    \centering

        \includegraphics[width=0.4\linewidth]{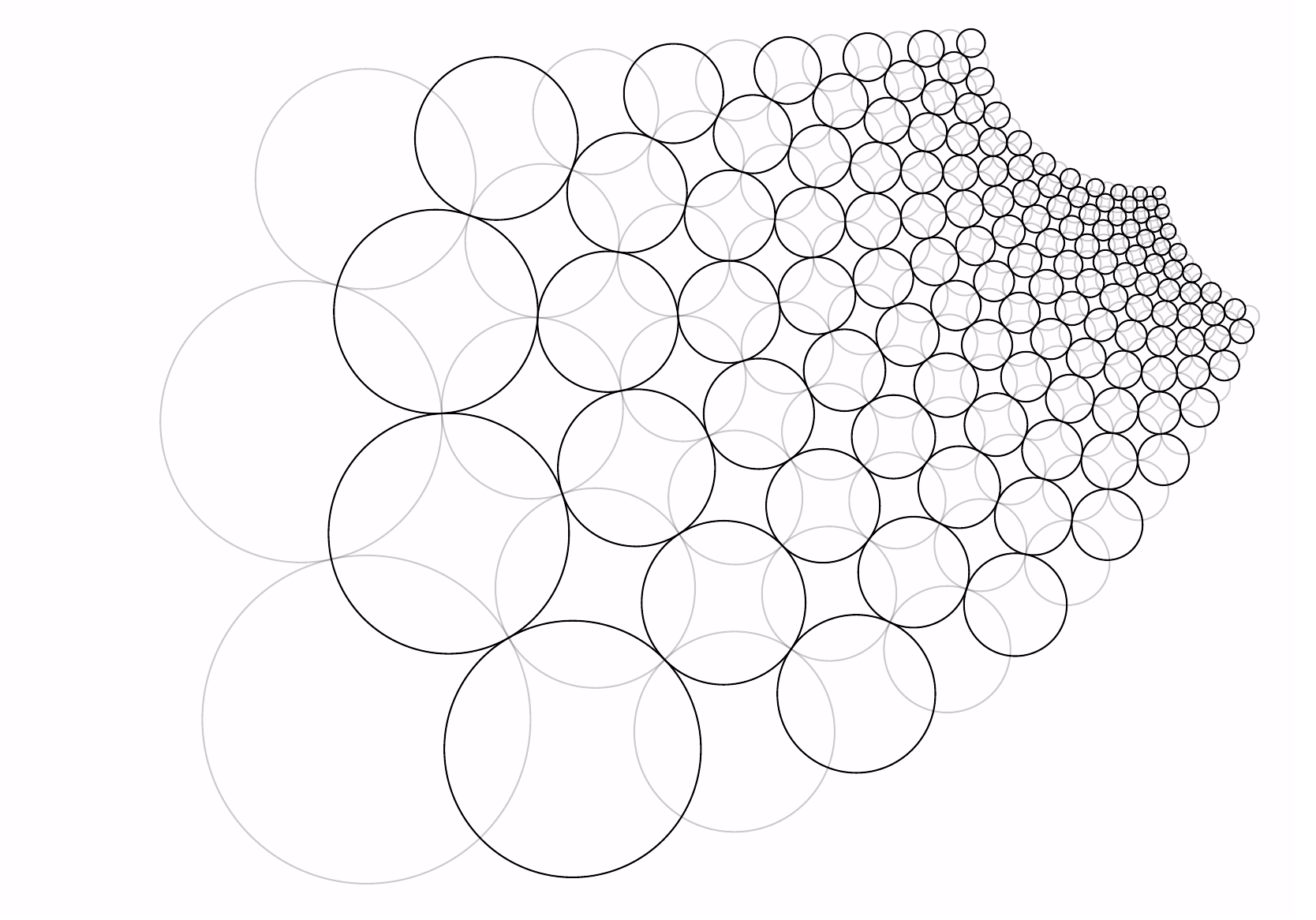}
        \caption{Schramm's circle pattern.}
        \label{fig:schramm}
    \end{figure}



We consider circle patterns that are quasi-periodic, that is, invariant
under translation by a nonzero complex number \(\Delta\); see
Figure~\ref{fig:circle_pattern}. Their initial data consist of a
periodic sequence of mutually orthogonal circles; see
Figure~\ref{fig:Sivp}. Our second main result is the integrability of
the corresponding Cauchy problem.

    \begin{figure}[!htb]
       \centering
    \begin{subfigure}{0.4\linewidth}
        \centering
                 {\includegraphics[width=\linewidth]{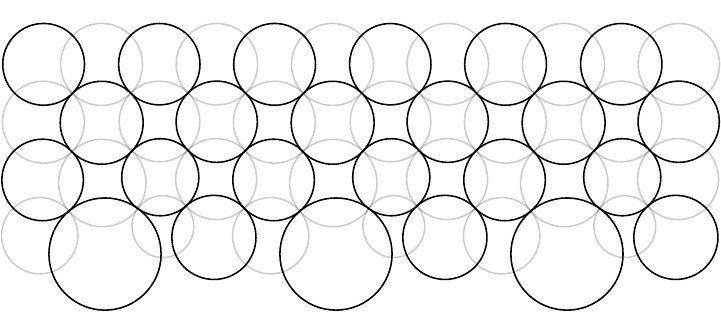}}
        \caption{A quasi-periodic Schramm circle pattern.}
        \label{fig:circle_pattern}
\end{subfigure}
\qquad
 \begin{subfigure}{0.4\linewidth}
\centering

\includegraphics[width=\linewidth,valign=c]{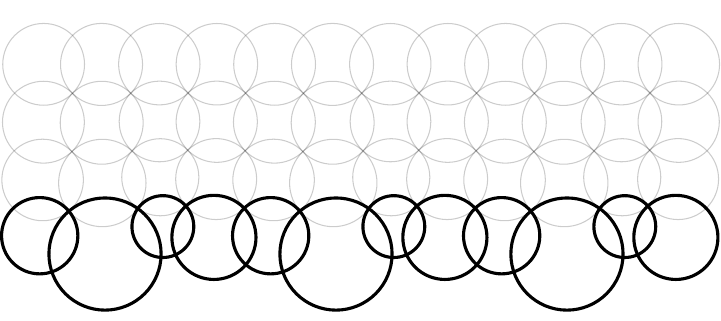}

    \caption{An initial value problem for a Schramm circle pattern. The row of black circles constitutes the initial data. }
    \label{fig:Sivp}
    \end{subfigure}
    \caption{}
\end{figure}

Our approach relies on the correspondence between Schramm's circle
patterns and discrete conformal maps described in
\cite[Section~4]{bobenko1999discrete}. Namely, the circle centers,
together with the intersection points of adjacent circles, form a
discrete conformal map; see Figure~\ref{fig:kites}.

\begin{figure}[!htb]
    \centering
    \includegraphics[width=0.25\linewidth]{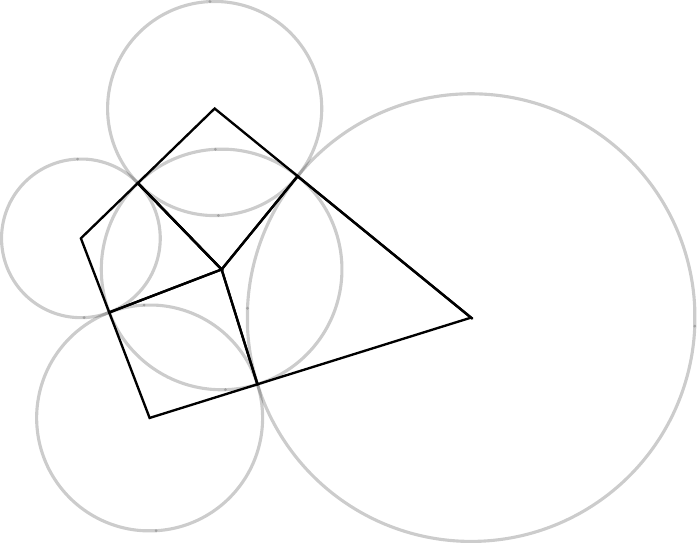}
    \caption{A lattice formed by circle centers and intersection points of adjacent circles in a Schramm circle pattern. Each quadrilateral of the lattice is a right kite and hence harmonic.}
    \label{fig:kites}
\end{figure}

In the quasi-periodic setting, this construction identifies the space of
initial data for Schramm circle patterns with a distinguished subspace of
the space $\mathcal P_{2n}$ of initial data for quasi-periodic discrete
conformal maps, namely the subspace of polygons in which every second side
is equal in length to the preceding side and orthogonal to the following
side; see Figure~\ref{fig:Sivp-ivp}.

\begin{figure}[!htb]
    \centering
   {\includegraphics[width=0.5\linewidth,valign=c]{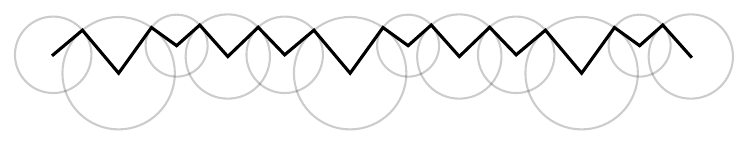}}
   
    \caption{Circle-pattern initial data viewed as a special class of initial data for the cross-ratio equation.}
    \label{fig:Sivp-ivp}

\end{figure}

We prove that the quotient
$
\mathcal K_{2n}/\Aff(\mathbb C),
$
where
$
\mathcal K_{2n}\subset\mathcal P_{2n}
$
denotes this distinguished subspace, is the \emph{real locus of a symplectic
leaf} of
$
\mathcal P_{2n}/\Aff(\mathbb C)
$
with respect to a non-standard real structure. Furthermore, the commuting first integrals from
Theorem~\ref{thm1} are compatible with the corresponding real structure
and therefore restrict to commuting first integrals on
$
\mathcal K_{2n}/\Aff(\mathbb C).
$
This yields the following theorem.

\begin{theorem}\label{thm2}
The Cauchy problem for
quasi-periodic Schramm circle patterns is Arnold--Liouville
integrable. Equivalently, for every \(n\in\mathbb Z_{>0}\), the corresponding initial data space
$
\mathcal K_{2n}/\Aff(\mathbb C)
$
carries a symplectic structure invariant under the solution map
\(\mathcal S\), together with a maximal collection of functionally
independent Poisson-commuting functions preserved by~\(\mathcal S\).
\end{theorem}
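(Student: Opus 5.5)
We will deduce Theorem~\ref{thm2} from Theorem~\ref{thm1}, applied with period $\per=(n,n)$ so that the zigzag has $2n$ vertices and the initial data space is $\mathcal P_{2n}/\Aff(\mathbb C)$. The argument has four steps: identify $\mathcal K_{2n}/\Aff(\mathbb C)$ inside the complex phase space, show it is the real locus of an anti-holomorphic involution on a symplectic leaf, check that the integrals of Theorem~\ref{thm1} are compatible with that involution, and count dimensions.

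\textbf{Step 1: coordinates and invariance.} Write a polygon $p\in\mathcal P_{2n}$ through its side vectors $v_k=p_{k+1}-p_k$; modulo $\Aff(\mathbb C)$, only the ratios $v_{k+1}/v_k$ matter. The kite condition defining $\mathcal K_{2n}$ reads
\[
v_{2k+1}=\pm i\,v_{2k}\cdot\frac{|v_{2k}|}{|v_{2k}|}\quad\text{(equal length)},\qquad v_{2k+2}\perp v_{2k+1}.
\]
Since every quadrilateral of the lattice built from a Schramm pattern is a right kite, the solution map $\mathcal S$ preserves $\mathcal K_{2n}$. This is the correspondence of \cite[Section~4]{bobenko1999discrete}: applying $\mathcal S$ once produces intersection points and centers of the next row of circles.

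\textbf{Step 2: the leaf and the real structure.} The Poisson structure of Theorem~\ref{thm1} has Casimirs, presumably built from the monodromy data, for example products of alternate side ratios. We will show that on $\mathcal K_{2n}$ the relevant side ratios $v_{2k+1}/v_{2k}$ are the constants $\pm i$. Hence $\mathcal K_{2n}/\Aff(\mathbb C)$ lies in a fixed level set of the Casimirs, and, after a dimension count, forms an open real part of that level set, which is a symplectic leaf $L$.

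The remaining ratios $v_{2k+2}/v_{2k+1}$ must be real (orthogonality combined with the $\pm i$ ratio). This suggests the anti-holomorphic involution $\sigma$ that conjugates these cross-ratio-type coordinates, possibly composed with a reversal or shift of indices. This is the ``non-standard'' real structure. We will check that $\sigma$ preserves $L$, that $L^\sigma=\mathcal K_{2n}/\Aff(\mathbb C)$, and that $\sigma$ is anti-Poisson: $\sigma^*\{F,G\}=\overline{\{\sigma^*F,\sigma^*G\}}$, possibly up to a sign. With these facts, the restriction of the symplectic form to the real locus is a real symplectic form, and it is $\mathcal S$-invariant because $\mathcal S$ commutes with $\sigma$.

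\textbf{Step 3: the integrals.} Take the invariants of Theorem~\ref{thm1}, which come from the spectral curve of the zero-curvature monodromy. We will show that $\sigma$ maps the monodromy matrix to a conjugate of the complex conjugate of the monodromy at a conjugated spectral parameter. Then the real and imaginary parts, or suitable real combinations, of the integrals restrict to real functions on $L^\sigma$. These restrictions Poisson commute because $\sigma$ is anti-Poisson, and they are $\mathcal S$-invariant.

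\textbf{Step 4: independence (main obstacle).} We must prove functional independence of $\tfrac12\dim_{\mathbb R}L^\sigma$ of these restricted functions. Independence on the complex leaf $L$ is not automatic: Theorem~\ref{thm1} gives independence on all of $\mathcal P_{2n}/\Aff(\mathbb C)$, and the leaf $L$ is special. We will first try to exhibit one explicit point of $\mathcal K_{2n}$, such as the standard square-grid pattern or a nearby perturbation, where the differentials restricted to $TL$ are independent. We will do this by computing the spectral curve near a degeneration, where the integrals become elementary symmetric functions of simple local quantities. Independence at one point of $L^\sigma$ then gives generic independence, since $L^\sigma$ is totally real of maximal dimension in $L$. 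The point that needs the most care is verifying that $\dim L$ matches twice the number of integrals that survive after the Casimirs are frozen.
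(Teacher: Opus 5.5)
Your four-step outline has the right shape, but Step~2 rests on a misreading of the multikite condition, and this derails the leaf, the real structure and the dimension count. The conditions \eqref{eq:staircase} are $z_{2k-1}\perp z_{2k}$ and $|z_{2k}|=|z_{2k+1}|$, imposed at \emph{different} vertices. In the coordinates $y_i=z_i/z_{i-1}$ they read $y_{2k}\in\sqrt{-1}\,\mathbb R$ and $|y_{2k+1}|=1$. So the ratios $y_{2k+1}$ that you take to be $\pm\sqrt{-1}$ range over the whole unit circle, since equal length imposes no angle. The ratios $y_{2k+2}$ that you take to be real are in fact purely imaginary, since that is what $\perp$ means. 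The locus you describe has real dimension at most $n-1$, whereas $\mathcal K_{2n}/\Aff(\C)$ has real dimension $2n-2$. It therefore cannot be the real part of a leaf, and no conjugation of the coordinates, even combined with an index permutation, has $\mathcal K_{2n}/\Aff(\C)$ as its fixed set.

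The missing idea is the following. The unique Casimir $C=y_1y_3\cdots y_{2n-1}$ is unimodular on $\mathcal K_{2n}$. On the other hand, $C^{-1}=y_2y_4\cdots y_{2n}\in(\sqrt{-1})^n\,\mathbb R$ because $y_1\cdots y_{2n}=1$. Together these force $C=\pm(\sqrt{-1})^n$. The correct real structure is $\tau\colon y_i\mapsto\bar y_i^{-1}$ for odd $i$ and $y_i\mapsto-\bar y_i$ for even $i$. It preserves the leaf (this uses $|C|=1$), and its fixed set is exactly the multikite locus. It also reverses the sign of the log-canonical bracket, so the real symplectic structure is induced by $\sqrt{-1}\{\cdot,\cdot\}$. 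Separately, in Step~1 the $\mathcal S$-invariance of $\mathcal K_{2n}$ does not follow from the circle-pattern correspondence alone, which presupposes it. You need that a harmonic quadrilateral with a right angle, or with two equal adjacent sides, is a right kite; this propagates the multikite condition from one staircase to the next.

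In Step~3 the spectral parameter is inverted, not conjugated. Write $\mathbf Y(t)$ as a product of the blocks $Y_{2k-1}(t)Y_{2k}(t)$ and set $\hat\tau(f)=\overline{\tau^*f}$. Then $t^nC\,\hat\tau\bigl(\mathbf Y(t^{-1})\bigr)=D\,\mathbf Y(t)\,D^{-1}$ with $D=\operatorname{diag}(1,t)$, whence $\hat\tau(I_k)=C^{-1}I_{n-k}$. Consequently $\overline{I_k}=C^{-1}I_{n-k}$ on $\mathcal K_{2n}$. The real and imaginary parts of $I_1,\dots,I_{n-1}$ thus span only an $(n-1)$-dimensional real space, and the right choice is a real basis of the $\hat\tau$-stable span of $I_1,\dots,I_{n-1}$. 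The count you flag is settled by $I_n=C+(-1)^nC^{-1}$: the top integral is the Casimir, constant on the leaf. This leaves $n-1$ integrals on a space of dimension $2n-2$.

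Step~4, the crux, is not carried out, and the standard grid itself cannot serve as the test point. It is fixed by the index shift $y_i\mapsto y_{i+2}$, which preserves the leaf and every $I_k$. Hence there each $dI_k$ is a combination of $\sum_k d\log y_{2k-1}$ and $\sum_k d\log y_{2k}$, and both vanish on the tangent space of the leaf. No explicit point is needed. Fix $\nu=\pm1$ and rerun the factorization argument of Section~\ref{sec:iqp} on the slice $\{I_n=2\nu(\sqrt{-1})^n\}$ of the space of holonomy matrices:
\begin{itemize}
\item The trace map from the slice to $\C^{n-1}$ is still surjective, being the restriction of the surjection of Lemma~\ref{lem:trace-surjectivity} to a level set of its last component.
\item The factorizable locus in the slice is open by Lemma~\ref{lem:openness}.
\item The factorizable locus is nonempty. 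The preimage $H\cong(\C^\times)^{2n-1}$ of the leaf component in edge coordinates is irreducible. Both $\{\mathbf Z(1)\neq0\}$ and $\{\mathbf Z(-1)\neq0\}$ are nonempty and open in $H$, by the invertibility of $Z_1(1)$ and $Z_{2n}(-1)$ together with Lemma~\ref{lem:nonemptiness}.
\end{itemize}
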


\textbf{Acknowledgments.} A.I. is grateful to Max Planck Institute for Mathematics in Bonn for
its hospitality and financial support. A.I. was partially supported by the Simons Foundation
through its Travel Support for Mathematicians program. M.A. was partially supported by the Simons Foundation grant MPS-TSM-00013259.


\section{Discrete conformal maps}
\subsection{Polygons and polygon spaces}

\begin{definition}
A \emph{quasi-periodic \(n\)-gon} is a map
\[
p\colon\mathbb Z\to\mathbb C, \qquad i \mapsto p_i,
\]
such that
\[
p_i\neq p_{i+1}
\qquad\text{for all }i\in\mathbb Z,
\]
and
\[
p_{i+n}=p_i+\Delta
\qquad\text{for all }i\in\mathbb Z,
\]
for some \(\Delta\in\mathbb C\). The complex number \(\Delta\) is called
the \emph{monodromy} of the polygon. We denote the space of quasi-periodic
\(n\)-gons by
$
\mathcal P_n.
$
\end{definition}

The group of translations of \(\mathbb C\) acts freely on
\(\mathcal P_n\). The quotient
\(
\mathcal P_n/\mathbb C
\)
is naturally identified with the torus
$
(\mathbb C^\times)^n,
$
via the edge coordinates
\[
z_i:=p_{i+1}-p_i.
\]
Indeed, the edge vectors are invariant under translations and satisfy
\(
z_{i+n}=z_i.
\)
Conversely, every point of \((\mathbb C^\times)^n\) determines a
quasi-periodic polygon uniquely up to translation.

Similarly, the affine group
\[
\Aff(\mathbb C)
=
\{z\mapsto az+b\mid a\in\mathbb C^\times,\ b\in\mathbb C\}
\]
acts freely on \(\mathcal P_n\). Since translations act trivially on
the edge vectors, the quotient
\(
\mathcal P_n/\Aff(\mathbb C)
\)
is obtained from \((\mathbb C^\times)^n\) by quotienting by the
simultaneous rescaling
\[
(z_1,\dots,z_n)
\longmapsto
(a z_1,\dots,a z_n).
\]
The quotient is therefore naturally identified with the affine variety
\[
\left\{
(y_1,\dots,y_n)\in(\mathbb C^\times)^n
\;\middle|\;
y_1\cdots y_n=1
\right\},
\]
via the coordinates
\[
y_i:=\frac{z_i}{z_{i-1}}.
\]
\subsection{The solution map as a Coxeter element}

In this section, we define the solution map
\(\mathcal S\colon\mathcal P_n\dashrightarrow\mathcal P_n\)
and show that it admits a factorization as a Coxeter element of a
certain Coxeter group acting on quasi-periodic polygons. The generators
of this group are involutive local transformations of polygons, called
\emph{foldings}.

\begin{definition}
Let
\[
\per=(\hp,\vp)\in\mathbb Z_{>0}^2,
\qquad
n=\hp+\vp.
\]
A \emph{\(\per\)-periodic zigzag} is a map
\[
\zeta\colon\mathbb Z\to\mathbb Z^2, \qquad i \mapsto \zeta_i,
\]
such that
\[
\zeta_{i+1}-\zeta_i\in\{(1,0),(0,1)\},
\qquad
\zeta_{i+n}-\zeta_i=\per,
\]
for all \(i\in\mathbb Z\).
\end{definition}

Given $\per=(\hp,\vp)$ and a \(\per\)-periodic zigzag
\(
\zeta\colon\mathbb Z\to\mathbb Z^2,
\)
restriction to \(\zeta\) identifies the space of quasi-periodic
discrete conformal maps of period \(\per\) with a full-measure subset of
the space \(\mathcal P_n\) of quasi-periodic \(n\)-gons, where
\(
n=\hp+\vp.
\)
This identification is a map
\[
r_\zeta\colon
\{\text{quasi-periodic discrete conformal maps of period }\per\}
\longrightarrow
\mathcal P_n.
\]
Let
\[
\zeta'=\zeta+(1,-1)
\]
be the adjacent parallel zigzag. Restriction to \(\zeta'\) similarly
defines a map
\[
r_{\zeta'}\colon
\{\text{quasi-periodic discrete conformal maps of period }\per\}
\longrightarrow
\mathcal P_n.
\]
\begin{definition}
    The \emph{solution map} is defined by
\[
\mathcal S
=
r_{\zeta'}\circ r_\zeta^{-1}
\colon
\mathcal P_n
\dashrightarrow
\mathcal P_n.
\]
\end{definition}
In other words, \(\mathcal S\) sends the restriction of a
quasi-periodic discrete conformal map to a zigzag to its restriction
to the adjacent parallel zigzag. The map \(\mathcal S\) is birational,
since both \(\mathcal S\) and its inverse are obtained by iteratively
solving the cross-ratio equation.

\medskip

We now show that the solution map $\S$ can be realized as a Coxeter element
of a suitably defined Coxeter group acting on quasi-periodic polygons. Observe that the passage from a zigzag to an adjacent parallel zigzag can be decomposed into a sequence of elementary local moves. Each such move replaces a corner of type $\ulcorner$ by a corner of type $\lrcorner$. In order to preserve periodicity, this move must be performed simultaneously at all corners related by translation by the period; see Figure \ref{fig:zzfolding}.
Accordingly, the shift map $\mathcal S$ can be decomposed into a sequence of elementary transformations. Each such transformation determines the value of a discrete conformal map at the fourth vertex of an elementary quadrilateral from its values at the other three vertices by means of the cross-ratio equation. We call these elementary transformations \emph{foldings}.

\begin{figure}[!hbt]
\centering
\begin{tikzpicture}[>=latex]

\begin{scope}

\fill[gray!20] (0,0) rectangle (0.5,0.5);
\fill[gray!20] (1.5,1) rectangle (2,1.5);

\draw[step=0.5cm,color=gray!40] (0,0) grid (3,2);

\draw[very thick]
(0,0) -- (0,0.5) -- (1,0.5) -- (1,1) -- (1.5,1)
-- (1.5,1.5) -- (2.5,1.5) -- (2.5,2) -- (3,2);

\end{scope}

\draw[->,thick] (3.8,1) -- (5.2,1);

\begin{scope}[shift={(6,0)}]

\fill[gray!20] (0,0) rectangle (0.5,0.5);
\fill[gray!20] (1.5,1) rectangle (2,1.5);

\draw[step=0.5cm,color=gray!40] (0,0) grid (3,2);

\draw[very thick]
(0,0) -- (0.5,0) -- (0.5,0.5) -- (1,0.5) -- (1,1)
-- (1.5,1) -- (2,1) -- (2,1.5) -- (2.5,1.5) -- (2.5,2) -- (3,2);

\end{scope}

\end{tikzpicture}
\caption{An elementary move on a $(3,2)$-periodic zigzag. The move is performed simultaneously in all squares related by the period.}
\label{fig:zzfolding}
\end{figure}

To define foldings, we regard \(\mathbb C\) as the boundary at infinity
of hyperbolic \(3\)-space \(\mathbb H^3\) (with one point removed).
Accordingly, a quasi-periodic polygon in \(\mathbb C\) may be viewed as
an ideal polygon in \(\mathbb H^3\), that is, a polygon whose vertices
lie at infinity. Given a quasi-periodic polygon \((p_i)\), its
\emph{folding} at the \(j\)th vertex is defined by reflecting the vertex
\(p_j\) in the hyperbolic geodesic joining \(p_{j-1}\) and
\(p_{j+1}\); see Figure~\ref{fig:folding}. Since the reflection depends
only on the triple
\(
p_{j-1},p_j,p_{j+1}, 
\)
it extends uniquely to all vertices congruent to \(j\) modulo \(n\),
producing another quasi-periodic \(n\)-gon. This leads to the following
formal definition.

\begin{figure}[ht]
\centering

\begin{tikzpicture}[
  scale=1,
  x={(1cm,0cm)},
  y={(0.45cm,0.25cm)},
  z={(0cm,1cm)},
  line cap=round,
  line join=round
]
\newcommand{\TangentRightAngle}[4][3mm]{%
  \coordinate (raSolid)  at ($(#2)!#1!(#3)$);
  \coordinate (raDashed) at ($(#2)!#1!(#4)$);
  \coordinate (raCorner) at ($(raSolid)+(raDashed)-(#2)$);
  \draw (raSolid) -- (raCorner) -- (raDashed);
}

\pgfmathsetmacro{\a}{2.0}      

\pgfmathsetmacro{\Bx}{1.50}    
\pgfmathsetmacro{\By}{1.65}    

\pgfmathsetmacro{\s}{\Bx*\Bx + \By*\By}

\pgfmathsetmacro{\p}{2*\a*\a*\Bx/(\s + \a*\a)}
\pgfmathsetmacro{\h}{sqrt(\a*\a - \p*\p)}

\pgfmathsetmacro{\qx}{\Bx - \p}
\pgfmathsetmacro{\qy}{\By}
\pgfmathsetmacro{\q}{sqrt(\qx*\qx + \qy*\qy)}
\pgfmathsetmacro{\dx}{\qx/\q}
\pgfmathsetmacro{\dy}{\qy/\q}

\pgfmathsetmacro{\c}{(\q*\q - \h*\h)/(2*\q)}
\pgfmathsetmacro{\R}{sqrt(\c*\c + \h*\h)}

\pgfmathsetmacro{\Bpx}{\p + (\c - \R)*\dx}
\pgfmathsetmacro{\Bpy}{(\c - \R)*\dy}

\pgfmathsetmacro{\Mx}{\p + \c*\dx}
\pgfmathsetmacro{\My}{\c*\dy}

\fill[gray!12] (-2.8,-3.8,0) -- (3.5,-3.8,0) -- (3.5,2.2,0) -- (-2.8,2.2,0) -- cycle;
\draw[gray!55] (-2.8,-3.8,0) -- (3.5,-3.8,0) -- (3.5,2.2,0) -- (-2.8,2.2,0) -- cycle;


\coordinate (A)  at (-\a,0,0);
\coordinate (C)  at ( \a,0,0);
\coordinate (B)  at (\Bx,\By,0);
\coordinate (Bp) at (\Bpx,\Bpy,0);
\coordinate (P)  at (\p,0,\h);


\draw[very thick,black!70!black]
  plot[domain=0:180,samples=100,variable=\t]
  ({\a*cos(\t)},0,{\a*sin(\t)});

\draw[very thick,dashed,black!75!black]
  plot[domain=0:180,samples=100,variable=\t]
  ({\p + (\c + \R*cos(\t))*\dx},
   {(\c + \R*cos(\t))*\dy},
   {\R*sin(\t)});


\pgfmathsetmacro{\tsX}{-\h/\a}
\pgfmathsetmacro{\tsY}{0}
\pgfmathsetmacro{\tsZ}{\p/\a}

\pgfmathsetmacro{\tdX}{-(\h/\R)*\dx}
\pgfmathsetmacro{\tdY}{-(\h/\R)*\dy}
\pgfmathsetmacro{\tdZ}{-\c/\R}

\pgfmathsetmacro{\ra}{0.2}

\coordinate (RA1) at ({\p+\ra*\tsX},{0+\ra*\tsY},{\h+\ra*\tsZ});
\coordinate (RA2) at ({\p+\ra*\tdX},{0+\ra*\tdY},{\h+\ra*\tdZ});
\coordinate (RA3) at ({\p+\ra*\tsX+\ra*\tdX},
                     {0+\ra*\tsY+\ra*\tdY},
                     {\h+\ra*\tsZ+\ra*\tdZ});

\draw[thin] (RA2) -- (RA3) -- (RA1);

\fill (A) circle (1.5pt) node[below left] {$p_{i-1}$};
\fill (C) circle (1.5pt) node[below right] {$p_{i+1}$};
\fill (B) circle (1.5pt) node[right] {$p_i$};
\fill (Bp) circle (1.5pt) node[below right] {$p_i'$};


\end{tikzpicture}

\caption{Folding at $p_i$ for an ideal polygon in $\mathbb{H}^3$. }\label{fig:folding}

\end{figure}

\begin{definition}
The folding of a quasi-periodic \(n\)-gon \((p_i)\) at the
\(j\)th vertex is the polygon \((p_i')\) defined by
\[
p_k' = p_k \quad \text{for } k \not\equiv j \pmod n,
\]
and, for all \(k \equiv j \pmod n\), by the condition
\[
[p_{k-1}, p_k, p_{k+1}, p_k'] = -1,
\]
where \([\cdot,\cdot,\cdot,\cdot]\) denotes the cross-ratio
\eqref{eq:cross-ratio}.
\end{definition}

The resulting polygon is again quasi-periodic with the same monodromy.
Thus, folding at the \(j\)th vertex defines a birational self-map
\[
F_j\colon\mathcal P_n\dashrightarrow\mathcal P_n.
\]

\begin{remark}
We use the term \emph{folding} by analogy with the Euclidean setting,
where a folding is the reflection of a vertex \(p_j\) of a polygon
\((p_i\in\mathbb R^2)\) in the diagonal joining its neighboring
vertices \(p_{j-1}\) and \(p_{j+1}\); see
\cite{Izmestiev2023}. For quadrilaterals, the composition of foldings
at two adjacent vertices is an integrable system also known as the
\emph{Darboux transformation}; see
\cite{DragovicRadnovic2025}.
\end{remark}
As we pass from one zigzag to another by a sequence of elementary moves as in Figure~\ref{fig:zzfolding}, the corresponding transformation of the initial data is given by a composition of the folding maps $F_j$. In particular, the solution map $\mathcal S$, which corresponds to passing from a zigzag to the adjacent parallel zigzag, can be written as a composition of foldings. This composition can be described explicitly in terms of a suitable Coxeter group. We therefore begin with a brief review of the necessary Coxeter-group background.
\begin{definition}
    A \emph{Coxeter group} is a group generated by involutions
$\sigma_1,\dots,\sigma_n$ subject to relations of the form
\[
(\sigma_i\sigma_j)^{m_{ij}}=1,
\]
where $m_{ij}\in\{2,3,\dots,\infty\}$ and $m_{ij}=\infty$ means that no
relation is imposed. 
\end{definition}  A {Coxeter group} can be depicted by its \emph{Coxeter diagram}.
The vertices of the Coxeter diagram correspond to the generators
$\sigma_i$. Two vertices are connected by an edge if and only if the
corresponding generators $\sigma_i$ and $\sigma_j$ do not commute, i.e.,
$m_{ij}>2$. Additionally, edges are labeled by the integers $m_{ij}$. 

\begin{definition}
    Let $W=\langle \sigma_1,\dots,\sigma_n\rangle$ be a Coxeter group.
A \emph{Coxeter element} of $W$ is the product of all generators
taken in some order:
\[
c=\sigma_{\pi(1)}\sigma_{\pi(2)}\cdots\sigma_{\pi(n)},
\]
where $\pi$ is a permutation of $\{1,\dots,n\}$.\end{definition} Since commuting generators can be interchanged without affecting the
resulting product, Coxeter elements are naturally parametrized by acyclic
orientations of the Coxeter diagram. Given a Coxeter element
$c=\sigma_{\pi(1)}\cdots\sigma_{\pi(n)}$, one obtains an orientation of the
Coxeter diagram by directing the edge joining $\sigma_i$ and $\sigma_j$
(if such an edge exists) from $\sigma_i$ to $\sigma_j$ whenever
$\sigma_j$ precedes $\sigma_i$ in the product. Conversely, every acyclic
orientation arises in this way; see, e.g., \cite[Lemma 4.1]{Coxeterquiver_Tomas}.


\medskip

Returning to our setting, consider the group $\FG_n$ generated by
the foldings $F_1,\dots,F_n$ acting on the space of quasi-periodic $n$-gons.
Each folding is an involution, and foldings at the $i$th and $j$th
vertices commute unless $i$ and $j$ are adjacent in the cyclic order on
$\{1,\dots,n\}$. Thus, $\FG_n$ is a Coxeter group whose Coxeter
diagram is the necklace graph shown in Figure~\ref{fig:Coxeter-pentagon}
(all edge labels are equal to $\infty$ and are therefore omitted).
Consequently, Coxeter elements of $\FG_n$ are in bijection with
acyclic orientations of the necklace graph; see
Figure~\ref{fig:Coxeter-oriented}.

\begin{figure}[ht]
\centering

\begin{subfigure}{0.4\linewidth}
\centering
\begin{tikzpicture}[scale=1.3,
  edge/.style={thick,shorten <=3mm,shorten >=3mm},
  curved/.style={bend #1=30,looseness=0.75}]
\tikzset{dot/.style={circle,inner sep=5pt,draw,name=#1}}

\coordinate (c0) at (0.5,2.0);
\coordinate (c1) at (1.5,2.0);
\coordinate (c2) at (2.5,2.0);
\coordinate (c3) at (3.5,2.0);
\coordinate (c4) at (4.5,2.0);

\node[dot=$1$] at (c0){}; \node at (c0) {\footnotesize$1$};
\node[dot=$1$] at (c1){}; \node at (c1) {\footnotesize$2$};
\node[dot=$2$] at (c2){}; \node at (c2) {\footnotesize$3$};
\node[dot=$3$] at (c3){}; \node at (c3) {\footnotesize$4$};
\node[dot=$4$] at (c4){}; \node at (c4) {\footnotesize$5$};

\draw[edge] (c4) to[curved=right] (c0);
\draw[edge] (c1) -- (c0);
\draw[edge] (c1) -- (c2);
\draw[edge] (c2) -- (c3);
\draw[edge] (c4) -- (c3);

\end{tikzpicture}
\caption{The Coxeter diagram of the group of foldings of pentagons.}
\label{fig:Coxeter-pentagon}
\end{subfigure}
\qquad
\begin{subfigure}{0.4\linewidth}
\centering
\begin{tikzpicture}[scale=1.3,
  myarrow/.style={->,thick,shorten <=3mm,shorten >=3mm},
  curved/.style={bend #1=30,looseness=0.75}]
\tikzset{dot/.style={circle,inner sep=5pt,draw,name=#1}}

\coordinate (c0) at (0.5,2.0);
\coordinate (c1) at (1.5,2.0);
\coordinate (c2) at (2.5,2.0);
\coordinate (c3) at (3.5,2.0);
\coordinate (c4) at (4.5,2.0);

\node[dot=$1$] at (c0){}; \node at (c0) {\footnotesize$1$};
\node[dot=$1$] at (c1){}; \node at (c1) {\footnotesize$2$};
\node[dot=$2$] at (c2){}; \node at (c2) {\footnotesize$3$};
\node[dot=$3$] at (c3){}; \node at (c3) {\footnotesize$4$};
\node[dot=$4$] at (c4){}; \node at (c4) {\footnotesize$5$};

\draw[myarrow] (c4) to[curved=right] (c0);
\draw[myarrow] (c1) -- (c0);
\draw[myarrow] (c1) -- (c2);
\draw[myarrow] (c2) -- (c3);
\draw[myarrow] (c4) -- (c3);

\end{tikzpicture}
\caption{The oriented Coxeter diagram corresponding to the folding sequence
$F_4\circ F_1\circ F_3\circ F_5\circ F_2$.}
\label{fig:Coxeter-oriented}
\end{subfigure}

\caption{}
\label{fig:Coxeter}
\end{figure}

On the other hand, acyclic orientations of the necklace graph with $n$
vertices are naturally in bijection with translation classes of
$n$-periodic zigzags in the square lattice $\mathbb Z^2$. Given an orientation, we
declare the $j$th edge of the zigzag to be horizontal if the edge joining
the vertices $j$ and $j+1$ of the necklace graph is oriented towards
$j+1$, and vertical otherwise. Conversely, every zigzag determines an
acyclic orientation of the necklace graph. For example, the orientation
shown in Figure~\ref{fig:Coxeter-oriented} corresponds to the zigzag shown
in Figure~\ref{fig:zig}.

To summarize, we have the following bijections:
\[
\left\{
\parbox{3.2cm}{\centering
Coxeter elements of\\
the folding group $\FG_n$}
\right\}
\longleftrightarrow
\left\{
\parbox{3.2cm}{\centering
acyclic orientations of the\\
necklace graph on $n$ vertices}
\right\}
\longleftrightarrow
\left\{
\parbox{3.2cm}{\centering
translation classes of\\
$n$-periodic zigzags}
\right\}.
\]

\begin{proposition}\label{prop:shiftfold}
Let $\zeta$ be an $n$-periodic zigzag. Under the correspondence between
translation classes of $n$-periodic zigzags and Coxeter elements of the
folding group $\FG_n$, the solution map
\[
\mathcal S \colon \mathcal P_n \dashrightarrow \mathcal P_n
\]
associated with $\zeta$ is precisely the Coxeter element corresponding to
$\zeta$.
\end{proposition}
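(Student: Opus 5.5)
We prove the statement by decomposing the passage from $\zeta$ to $\zeta'=\zeta+(1,-1)$ into elementary corner moves, as in Figure~\ref{fig:zzfolding}, and identifying each move with a folding. Index the vertices of $\zeta$ so that $p_i=f(\zeta_i)$, and call $i$ a \emph{$\ulcorner$-corner} if $\zeta_i-\zeta_{i-1}=(0,1)$ and $\zeta_{i+1}-\zeta_i=(1,0)$. Replacing $\zeta_i$ by $\zeta_i+(1,-1)=\zeta_{i-1}+(1,0)$ at such a corner gives a new zigzag. The four points $\zeta_{i-1},\zeta_i,\zeta_{i+1},\zeta_i+(1,-1)$ form an elementary square, traversed in the order $(a,b),(a,b+1),(a+1,b+1),(a+1,b)$.

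The first step is to show that the new value at this corner is given by the folding $F_i$. The cross-ratio equation \eqref{eq:CRE} for this square, after a cyclic relabelling, reads
\[
[f_{a,b},f_{a,b+1},f_{a+1,b+1},f_{a+1,b}]=-1 .
\]
The cross-ratio \eqref{eq:cross-ratio} satisfies $[a,b,c,d]=[b,c,d,a]^{-1}$ and $[a,b,c,d]=[d,c,b,a]$, so the value $-1$ is invariant under all dihedral relabellings. The equation is therefore exactly $[p_{i-1},p_i,p_{i+1},p_i']=-1$, which is the definition of $F_i$. By quasi-periodicity, performing the move simultaneously at all corners $i+kn$ corresponds to the single map $F_i$ on $\mathcal P_n$.

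The second step is combinatorial. We check that $\zeta'$ is obtained from $\zeta$ by applying the move exactly once at each residue class $j\in\mathbb Z/n$, and we determine the order. Every vertex moves by $(1,-1)$ exactly once. Vertex $j$ becomes a $\ulcorner$-corner, and can be moved, only after the moves that make its incident edges point in the right directions. If edge $j$ (from $\zeta_j$ to $\zeta_{j+1}$) is horizontal, then vertex $j$ can only move once $j+1$ has already moved. If edge $j$ is vertical, then $j+1$ must move after $j$.

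The third step compares this with the orientation convention. The paper orients the necklace edge $\{j,j+1\}$ towards $j+1$ exactly when zigzag edge $j$ is horizontal, and an arrow $\sigma_i\to\sigma_j$ means that $\sigma_j$ precedes $\sigma_i$ in the product. Reading the composition $F_{\pi(1)}\circ\cdots\circ F_{\pi(n)}$ as applying $F_{\pi(n)}$ first, "precedes in the product" means "applied later". So for a horizontal edge $j$ the arrow is $j\to j+1$: $F_{j+1}$ is written to the left of $F_j$ and applied after it. We must check that this agrees with the precedence found in the second step, and correct the direction of one of the two conventions if it does not. The example $F_4\circ F_1\circ F_3\circ F_5\circ F_2$ for the $(3,2)$-zigzag of Figure~\ref{fig:zig} fixes the normalisation, and we will verify the general claim against it.

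The acyclic orientation determines the Coxeter element only up to commutations, which are harmless because non-adjacent foldings commute. Any linear extension of the precedence partial order therefore yields the same map, and this map is $\mathcal S=r_{\zeta'}\circ r_\zeta^{-1}$.

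The main obstacle is this bookkeeping of directions: which endpoint must move first, and how it matches the orientation and composition-order conventions. We expect to settle it by checking a single corner and then the $(3,2)$ example.
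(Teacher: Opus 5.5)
Your outline matches the paper's proof: each corner move is a folding, every residue class moves exactly once, and the order of the factors is read off from the edge directions. Your Step~1, including the dihedral invariance of the value $-1$, is correct. The gap is in Step~2, which carries the actual content of the proposition, because the precedence rule you state is reversed. By your own definition, vertex $j$ can move only when edge $j-1$ is vertical and edge $j$ is horizontal, and the move turns edge $j-1$ horizontal and edge $j$ vertical. Edge $j$ is changed only by the moves at $j$ and $j+1$: the move at $j$ needs it horizontal, the move at $j+1$ needs it vertical, and each move flips it. Hence if edge $j$ is horizontal, vertex $j$ moves \emph{before} $j+1$, and if it is vertical, $j+1$ moves \emph{before} $j$. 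Your rule is in fact the precedence for passing from $\zeta$ to $\zeta+(-1,1)$, i.e.\ for $\mathcal S^{-1}$.

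The $(3,2)$ example confirms the corrected rule. With $\zeta_1=(0,0)$ the edges are vertical, horizontal, horizontal, vertical, horizontal, so the $\ulcorner$-corners are vertices $2$ and $5$. These are exactly the rightmost factors of $F_4\circ F_1\circ F_3\circ F_5\circ F_2$. Your rule would instead make vertices $1$ and $4$ move first, although neither is a $\ulcorner$-corner. With the corrected rule, a horizontal edge $j$ means $F_j$ is applied before $F_{j+1}$, i.e.\ $F_{j+1}$ stands to the left of $F_j$. That is exactly what you extracted from the orientation convention in Step~3, so the two agree without any adjustment.

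Your proposed remedy, to ``correct the direction of one of the two conventions'' if Steps~2 and~3 disagree, is also not available. Both the zigzag--orientation correspondence and the orientation--Coxeter-word correspondence are fixed in the statement. Flipping either one reverses the Coxeter word, and since each $F_i$ is an involution, the reversed word is $c^{-1}$, which is in general a different map from $c$. Had you kept your Step~2 and adjusted a convention, you would have ``proved'' $\mathcal S=c^{-1}$ for the paper's $c$, which is false. The order must be derived from the corner condition as above. Once that is done, with the corrected rule every step of any linear extension of the resulting acyclic order is a legal move, and all linear extensions give the same composition because non-adjacent foldings commute. The rest of your argument then goes through as in the paper.
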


\begin{proof}
As explained above, the solution map is obtained by a sequence of foldings,
each corresponding to an elementary move on zigzags of the form shown in
Figure~\ref{fig:zzfolding}. Collectively, these moves transform a zigzag
$\zeta$ into its adjacent parallel translate $\zeta'$.

To pass from $\zeta$ to $\zeta'$, every vertex of $\zeta$ must be shifted
once in the southeast direction. Consequently, the solution map is a
composition of foldings in which each generator $F_i$ appears exactly
once. In other words, it is represented by a Coxeter element.

It remains to determine the order of the factors. Consider two adjacent
vertices $i$ and $i+1$ of the zigzag. If the edge joining them is
horizontal, then the move at the $i$th vertex must be performed before the
move at the $(i+1)$st vertex. Since compositions are read from right to
left, the corresponding Coxeter word contains $F_{i+1}$ to the left
of~$F_i$.

Similarly, if the edge joining $i$ and $i+1$ is vertical, then the move at
the $(i+1)$st vertex must be performed before the move at the $i$th
vertex, and hence the corresponding Coxeter word contains $F_i$ to the
left of~$F_{i+1}$.

Thus
\[
\mathcal S = F_{i_1}\circ\cdots\circ F_{i_n},
\]
where $(i_1,\dots,i_n)$ is a permutation of $(1,\dots,n)$, and $F_i$
appears to the left of $F_{i+1}$ if and only if the edge of $\zeta$
joining the vertices $i$ and $i+1$ is vertical. In terms of the necklace
graph, the latter condition means precisely that the edge joining the
vertices $i$ and $i+1$ is oriented towards $i$. Therefore, the order of
the generators in the Coxeter word representing $\mathcal S$ is exactly
the one prescribed by the orientation corresponding to $\zeta$. Hence
$\mathcal S$ is the Coxeter element associated with that orientation, or,
equivalently, with the translation class of the zigzag~$\zeta$.
\end{proof}




\subsection{Discrete conformality as zero curvature}
In this section, we show that discrete conformality of a map
\(
f\colon\mathbb Z^2\to\mathbb{CP}^1
\)
is equivalent to flatness of a natural connection on
\(\mathbb Z^2\) associated with \(f\). This provides a
\emph{zero-curvature representation} of the cross-ratio equation. Such
a representation is well known to exist as a consequence of the
three-dimensional consistency of the cross-ratio equation. Here we give
a concise direct derivation that does not rely on three-dimensional
consistency.

Moreover, we define \emph{two} flat connections associated with a discrete
conformal map. One of them, denoted by \(\Pi\), takes values in the loop
group of \(\PGL_2(\mathbb C)\). The other, denoted by \(\pi\), takes
values in the additive group of the Lie algebra
\(\mathfrak{pgl}_2(\mathbb C)\) and may be viewed as the linearization
of \(\Pi\). The former will be used in the following sections to
construct first integrals of the solution map, while the latter serves
as an auxiliary tool.
\begin{definition}
Let $\Gamma$ be a graph, and let $G$ be a group. A \emph{$G$-connection}
on $\Gamma$ is a map
\(
e \mapsto \phi_e
\)
from the set of oriented edges of $\Gamma$ to $G$ such that
\[
\phi_{\bar e}=\phi_e^{-1},
\]
where $\bar e$ denotes the edge $e$ with reversed orientation.

Given a based loop \(e_1,\dots,e_n\) in \(\Gamma\), that is, a sequence of
oriented edges such that the head \(h(e_i)\) of \(e_i\) coincides with
the tail \(t(e_{i+1})\) of \(e_{i+1}\) (with indices understood
cyclically), the corresponding \emph{holonomy} is the product
\[
\phi_{e_1}\cdots\phi_{e_n}.
\]
\end{definition}

\begin{definition}
Suppose that the graph $\Gamma$ is embedded in a $2$-manifold $\Sigma$
so that its \emph{faces}, i.e., the connected components of
$\Sigma\setminus\Gamma$, are topological disks. A $G$-connection $\phi$
on $\Gamma$ is called \emph{flat} if its holonomy along the boundary of
every face of $\Gamma$ is the identity. (Note that the holonomy around a loop depends on the choice of base
point; however, changing the base point conjugates the holonomy, so
whether it is the identity is independent of the choice of base point.)
\end{definition}
We then have the following standard result.

\begin{proposition}
For a flat connection, the holonomy along a based loop depends only on
its homotopy class in $\pi_1(\Sigma)$.
\end{proposition}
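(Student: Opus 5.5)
The plan is to show that, for a flat connection, holonomy is invariant under the elementary moves that generate homotopy of loops in the graph, and then use the standard fact that these moves generate homotopy in \(\Sigma\). Because the faces of \(\Gamma\) are disks, \(\Sigma\) is obtained from \(\Gamma\) by attaching \(2\)-cells. By cellular approximation, two edge loops in \(\Gamma\) are homotopic in \(\Sigma\) (rel base point) if and only if one can be turned into the other by a finite sequence of the following combinatorial moves:
\begin{enumerate}
\item[(a)] inserting or deleting a backtracking pair \(e\bar e\);
\item[(b)] inserting or deleting the boundary loop of a face, traversed from a vertex already on the loop.
\end{enumerate}
This is the standard presentation of \(\pi_1\) of a \(2\)-complex: the fundamental group of the graph is free, and attaching each \(2\)-cell adds the relation given by its boundary word.

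Next I would check that each move preserves holonomy.

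\textbf{Move (a).} The contribution of the inserted pair is \(\phi_e\phi_{\bar e}=\phi_e\phi_e^{-1}=1\). This uses only the axiom \(\phi_{\bar e}=\phi_e^{-1}\).

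\textbf{Move (b).} Suppose the face boundary is inserted at a vertex \(v\) of the loop, and \(v\) lies on the face boundary. Then the inserted factor is the holonomy of the face boundary based at \(v\), which is the identity by flatness. This holds for any base point on the boundary, since changing the base point conjugates the holonomy, as the paper remarks, and a conjugate of \(1\) is \(1\).

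If \(v\) is not on the face boundary, the insertion has the form \(\gamma\,\partial F\,\bar\gamma\), where \(\gamma\) is a path from \(v\) to a vertex of the face. Its holonomy is \(g\cdot 1\cdot g^{-1}=1\), using that the holonomy of a reversed path is the inverse, again by \(\phi_{\bar e}=\phi_e^{-1}\).

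Since each move multiplies the holonomy by the identity in the middle of the product, homotopic based loops have equal holonomy. So holonomy descends to a map \(\pi_1(\Sigma,v)\to G\), and in fact to a homomorphism.

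The only genuinely nontrivial input is the combinatorial description of homotopy in the first step, that is, that \(\pi_1(\Sigma)\) equals \(\pi_1(\Gamma)\) modulo the normal closure of the face boundaries. I would cite this as van Kampen's theorem for attaching \(2\)-cells, together with cellular approximation to replace arbitrary homotopies by ones through the \(2\)-skeleton, rather than reprove it. Everything else is the two one-line computations above.
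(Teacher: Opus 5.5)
Your argument is correct. The paper gives no proof of this proposition and simply calls it a standard result; what you wrote is exactly the standard argument being invoked. Holonomy is unchanged by cancelling a backtrack $e\bar e$ (because $\phi_{\bar e}=\phi_e^{-1}$) and by inserting a conjugated face boundary (by flatness), and $\pi_1(\Sigma)$ is the edge-path group of $\Gamma$ modulo the normal closure of the face boundaries.

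The only implicit input, which the paper's definition of flatness already assumes, is that each face is a $2$-cell attached along a closed boundary walk, so that $\Sigma$ is a CW complex with $1$-skeleton $\Gamma$. This clearly holds for the square lattices in the plane and cylinder used later in the paper.
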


 

Returning to the setting of discrete conformal maps, let
\[
f\colon\mathbb Z^2\to\mathbb{CP}^1
\]
be a map sending adjacent lattice points to distinct points. We now
associate to \(f\) two connections on \(\mathbb Z^2\), denoted by
\(\pi\) and \(\Pi\). 

The first connection, \(\pi\), takes values in the additive group
\(\mathfrak{pgl}_2(\mathbb C)\), that is, the vector space of
\(2\times2\) matrices modulo scalar matrices. For an oriented edge
\(e\), let
\[
a:=f_{t(e)},
\qquad
b:=f_{h(e)},
\]
and define
\[
\pi_e:=\eps_e\proj_a^b,
\]
where \(\eps_e=1\) if \(e\) is horizontal and
\(\eps_e=-1\) if \(e\) is vertical, and where
\(\proj_a^b\) denotes the projection onto the line
\(a\subset\mathbb C^2\) along the line
\(b\subset\mathbb C^2\). Since
\[
\proj_a^b+\proj_b^a=\id=0
\]
in \(\mathfrak{pgl}_2(\mathbb C)\), we have
\[
\pi_{\bar e}=-\pi_e,
\]
so \(\pi\) is indeed a connection.

The second connection, \(\Pi\), takes values in
\(\PGL_2(\mathbb C(t))\), where \(\mathbb C(t)\) denotes the field of
rational functions in an indeterminate \(t\) (the
\emph{spectral parameter}). It is defined by
\begin{equation}\label{eq:Piconn}
\Pi_e(t):=\id-\eps_e t\,\proj_a^b.
\end{equation}
Since
\[
\Pi_{\bar e}(t)
=
\id-\eps_e t\,\proj_b^a,
\]
and
\[
\proj_a^b+\proj_b^a=\id,
\qquad
\proj_a^b\proj_b^a
=
\proj_b^a\proj_a^b
=
0,
\]
we obtain
\[
\Pi_e(t)\,\Pi_{\bar e}(t)
=
(1-\eps_e t)\id.
\]
Thus \(\Pi_e(t)\Pi_{\bar e}(t)\) is the identity in
\(\PGL_2(\mathbb C(t))\), so
\[
\Pi_{\bar e}(t)=\Pi_e(t)^{-1}.
\]
Hence \(\Pi\) is also a connection.






\begin{proposition}
Let
\[
f\colon\mathbb Z^2\to\mathbb{CP}^1
\]
be a map sending adjacent lattice points to distinct points. Then the
following are equivalent:
\begin{enumerate}
\item \(f\) is discrete conformal;
\item the connection \(\pi\) is flat;
\item the connection \(\Pi\) is flat.
\end{enumerate}
\end{proposition}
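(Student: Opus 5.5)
Flatness is a condition on each elementary square separately, so I will work with a single square with vertices $f_{i,j}=a$, $f_{i+1,j}=b$, $f_{i+1,j+1}=c$, $f_{i,j+1}=d$, and compute the holonomy around $a\to b\to c\to d\to a$. The edges $a\to b$ and $c\to d$ are horizontal, with signs $+1$ and (traversed backwards) so that their contributions are $\proj_a^b$ and $\proj_c^d$. The edges $b\to c$ and $d\to a$ are vertical, contributing $-\proj_b^c$ and $-\proj_d^a$. Both flatness conditions are invariant under $\PGL_2$ acting simultaneously on the four points, which conjugates every projection. The cross-ratio is also invariant. So I may normalize, for instance $a=0$, $b=\infty$, $c=1$, $d=x$, with $x$ distinct from the adjacent points, and the cross-ratio condition becomes a condition on $x$ alone.

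For (1)$\Leftrightarrow$(2), I need to check when
\[
\proj_a^b-\proj_b^c+\proj_c^d-\proj_d^a
\]
is a scalar matrix. Using $\proj_d^a=\id-\proj_a^d$ and $\proj_b^c=\id-\proj_c^b$, this becomes $\proj_a^b+\proj_a^d+\proj_c^b+\proj_c^d$ modulo scalars. Each projection $\proj_u^v$ has the explicit form $u\,\alpha_v/\alpha_v(u)$, where $\alpha_v$ is a covector vanishing on $v$, so the sum can be written down directly in the normalized coordinates. I expect the scalar condition to reduce to $x=-1$, which is exactly harmonicity $[0,\infty,1,x]=-1$. A more conceptual route is to note that $\proj_a^b+\proj_a^d$ maps everything into $a$, and $\proj_c^b+\proj_c^d$ maps everything into $c$. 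A sum of a rank-$\le1$ operator with image $a$ and one with image $c$ can be scalar only if each piece acts in a prescribed way on the other line. This should translate into $d$ being the harmonic conjugate of $b$ with respect to $a,c$.

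For (3), the holonomy of $\Pi$ is
\[
H(t)=(\id-t\proj_a^b)(\id+t\proj_b^c)(\id-t\proj_c^d)(\id+t\proj_d^a),
\]
up to the sign conventions fixed by the orientation. Flatness means $H(t)$ is a scalar multiple of $\id$ in $\Mat_2(\mathbb C(t))$. Expanding in powers of $t$, the linear coefficient is minus the $\pi$-holonomy, so (3)$\Rightarrow$(2) is immediate: if $H(t)=\lambda(t)\id$, comparing first-order terms gives flatness of $\pi$. The converse (2)$\Rightarrow$(3), or equivalently (1)$\Rightarrow$(3), is the main obstacle, because the higher-order terms must also be scalar. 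I plan to verify it directly in the normalized coordinates $a=0$, $b=\infty$, $c=1$, $d=-1$. In this chart each factor is an explicit $2\times2$ matrix linear in $t$, and I would check that the product equals $(1-t)(1+t)$, or $(1-t^2)$ times a constant, times $\id$.

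An alternative that avoids the computation is to write $H(t)=\Pi_{ab}\Pi_{bc}\Pi_{cd}\Pi_{da}$ and use $\Pi_e\Pi_{\bar e}=(1-\eps_e t)\id$ to restate flatness as
\[
\Pi_{ab}\Pi_{bc}\propto\Pi_{ad}\Pi_{dc}.
\]
Both sides send the line $c$ to $a$ and have determinant $(1-t)(1+t)$ up to a constant. One would then compare their action on one further line to conclude. In practice I would do the explicit normalized matrix product, since it is short once the chart is fixed.
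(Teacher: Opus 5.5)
You have several pieces right. The $\pi$-holonomy is indeed $P=\proj_a^b-\proj_b^c+\proj_c^d-\proj_d^a$. Your proof of (3)$\Rightarrow$(2), by reading off the linear coefficient of $H(t)$, is valid. Your reformulation $\Pi_{ab}\Pi_{bc}\propto\Pi_{ad}\Pi_{dc}$ is exactly the paper's.

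However, both nontrivial implications are deferred to computations, and the concrete data you commit to is wrong. With the paper's convention, in the chart $a=0$, $b=\infty$, $c=1$ one has $[0,\infty,1,x]=(x-1)/x$. So harmonicity means $x=1/2$, the harmonic conjugate of $\infty$ with respect to $0,1$ (as your own conceptual remark predicts), while $[0,\infty,1,-1]=2$. Representing $z$ by $(z,1)^T$ and $\infty$ by $(1,0)^T$, the off-diagonal entries of $\proj_a^b+\proj_a^d+\proj_c^b+\proj_c^d$ are $1-\tfrac{x}{1-x}$ and $\tfrac{1}{1-x}-\tfrac{1}{x}$, which vanish only at $x=1/2$. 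So the check you plan for (2)$\Rightarrow$(3) at $d=-1$ would fail: there $\pi$ is not flat, and hence, by your own (3)$\Rightarrow$(2), neither is $\Pi$.

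Separately, your normalization requires $a,b,c$ pairwise distinct, but only adjacent vertices are assumed distinct. The cases $a=c$ and/or $b=d$ can occur. Then $[a,b,c,d]=1$, and you must show non-flatness directly, e.g.\ $P|_a=2\,\id_a$ if $a=c$. The paper's coordinate-free argument covers these cases automatically.

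For (2)$\Rightarrow$(3) you are missing the idea that makes any computation unnecessary. Since $\mathrm{im}\,\proj_b^c=b=\ker\proj_a^b$, one has $\proj_a^b\proj_b^c=0$, and likewise $\proj_a^d\proj_d^c=0$. Hence $\Pi_{ab}\Pi_{bc}=\id+t(\proj_b^c-\proj_a^b)$ and $\Pi_{ad}\Pi_{dc}=\id+t(\proj_a^d-\proj_d^c)$ are linear in $t$ with traceless $t$-coefficients. Proportionality therefore forces equality (compare traces), and equality is literally $P=0$. This gives (2)$\Leftrightarrow$(3) in one line. Your suggested alternative rests on a false claim: $\Pi_{ab}\Pi_{bc}\,c=c-t\,\proj_a^b c$, which does not lie in the line $a$.

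For (1)$\Leftrightarrow$(2), your ``conceptual route'' does work once completed. Suppose $a\neq c$ and $A+C$ is scalar, with $\mathrm{im}\,A\subseteq a$ and $\mathrm{im}\,C\subseteq c$. Evaluating on $a$ forces $Ca=0$, i.e.\ $(\proj_c^b+\proj_c^d)|_a=0$. This is equivalent to $[a,b,c,d]=-1$ because $\proj_c^b|_a=[a,b,c,d]\,\proj_c^d|_a$. Conversely, harmonicity, which is symmetric under $a\leftrightarrow c$, gives $Ca=0=Ac$ and hence $A+C=2\,\id$. This is essentially the paper's proof, which restricts the traceless matrix $P$ to the lines $a$ and $b$.
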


\begin{proof}
Let \(a,b,c,d\) be the images under \(f\) of the vertices of an
elementary square of \(\mathbb Z^2\), listed counterclockwise. To prove
the equivalence of (1) and (2), it suffices to show that
\[
[a,b,c,d]=-1
\quad\Longleftrightarrow\quad
P(a,b,c,d)=0,
\]
where
\[
P(a,b,c,d)
:=
\proj_a^b-\proj_b^c+\proj_c^d-\proj_d^a.
\]
(Strictly speaking, flatness of \(\pi\) requires \(P(a,b,c,d)\) to vanish
only modulo scalar matrices. However, \(P(a,b,c,d)\) has trace zero and
is therefore scalar if and only if it is zero.)

The condition
\(
[a,b,c,d]=-1
\)
admits the following interpretation. Consider the restrictions to the
line \(a\) of the projections onto \(c\) along \(b\) and \(d\),
respectively:
\[
\left.\proj_c^b\right|_a,
\,
\left.\proj_c^d\right|_a
\colon
a\to c.
\]
Since \(a\) and \(c\) are one-dimensional, these maps differ by a
scalar, namely the cross-ratio \([a,b,c,d]\). Thus
\[
[a,b,c,d]=-1
\quad\Longleftrightarrow\quad
\left.(\proj_c^b+\proj_c^d)\right|_a=0,
\]
see Figure \ref{fig:crossratio-minus-one}.

\begin{figure}[!ht]
\centering

\begin{tikzpicture}[
  scale=2,
  line cap=round,
  line join=round
]

\coordinate (O)  at (0,0);
\coordinate (x)  at (1,0);
\coordinate (Pbd) at (0,-1);
\coordinate (Pdd) at (0,1);

\draw[] (-1.35,0) -- (1.35,0) node[right] {\footnotesize$a$};
\draw[] (0,-1.35) -- (0,1.35) node[above] {\footnotesize$c$};

\draw[] (-1.15,-1.15) -- (1.15,1.15) node[above right] {\footnotesize$b$};
\draw[] (1.15,-1.15) -- (-1.15,1.15) node[above left] {\footnotesize$d$};

\draw[dashed,] (x) -- (Pbd);
\draw[dashed,] (x) -- (Pdd);

\fill (x) circle (1.3pt) node[below right] {\footnotesize $x$};
\fill (Pbd) circle (1.3pt) node[below right] {\footnotesize$P_c^b(x)$};
\fill (Pdd) circle (1.3pt) node[above right] {\footnotesize$P_c^d(x)$};

\draw[] (-0.035,0.48) -- (0.035,0.48);
\draw[] (-0.035,0.54) -- (0.035,0.54);

\draw[] (-0.035,-0.48) -- (0.035,-0.48);
\draw[] (-0.035,-0.54) -- (0.035,-0.54);

\end{tikzpicture}

\caption{An interpretation of the condition \([a,b,c,d]=-1\).}
\label{fig:crossratio-minus-one}

\end{figure}

Assume first that
\(
[a,b,c,d]=-1.
\)
Restricting \(P(a,b,c,d)\) to \(a\), we obtain
\[
\left.P(a,b,c,d)\vphantom{\id-\proj_b^c+\proj_c^d)}\right|_a
=
\left.(\id-\proj_b^c+\proj_c^d)\right|_a
=
\left.(\proj_c^b+\proj_c^d)\right|_a
=
0.
\]
Since
\(
[b,c,d,a]=-1,
\)
the same argument shows that \(P(a,b,c,d)\) vanishes on \(b\). As \(a\)
and \(b\) are distinct lines, it follows that
\(
P(a,b,c,d)=0.
\)
Conversely, if
\(
P(a,b,c,d)=0,
\)
then restricting to \(a\) gives
\[
\left.(\proj_c^b+\proj_c^d)\right|_a=0,
\]
and hence
\(
[a,b,c,d]=-1. 
\) 

Now we prove equivalence of (2) and (3). Flatness of \(\Pi\) around the same elementary square is
equivalent to
\[
(\id-t\proj_a^b)(\id+t\proj_b^c)
=
(\id+t\proj_a^d)(\id-t\proj_d^c).
\]
Since
\[
\proj_a^b\proj_b^c=0,
\qquad
\proj_a^d\proj_d^c=0,
\]
this is equivalent to
\[
\proj_b^c-\proj_a^b
=
\proj_a^d-\proj_d^c.
\]
Using
\[
\proj_a^d+\proj_d^a=\id,
\qquad
\proj_d^c+\proj_c^d=\id,
\]
we rewrite this as
\[
\proj_a^b-\proj_b^c+\proj_c^d-\proj_d^a=0,
\]
which is precisely the flatness condition for \(\pi\). Hence
\(\pi\) is flat if and only if \(\Pi\) is flat.
\end{proof}

\begin{remark}
The connection \(\Pi\) admits the following geometric interpretation.
Let \(a,b\in\CP^1\) be distinct. For every \(t\in\C\), there is a unique
projective transformation \(g_t\) satisfying
\[
[a,b,x,g_t(x)]=t
\]
for every \(x\in\CP^1\) for which the cross-ratio is defined.
Explicitly, \(g_t=\id-tP_a^b\). Thus, if an oriented edge \(e\) has tail
\(a\) and head \(b\), then \(\Pi_e(t)=g_{\pm t}\), where the sign is
positive for horizontal edges and negative for vertical ones.

Hence the zero-curvature representation of the cross-ratio equation is
itself given by the same equation, with the constant value \(-1\)
replaced by the spectral parameter \(\pm t\). This is a characteristic
feature of three-dimensionally consistent quad-equations.
\end{remark}

\subsection{A periodic zero-curvature representation}

In what follows, we will use a flat connection to construct first
integrals of the solution map. This construction requires the
connection to be periodic. The connection \(\Pi\) constructed in the
previous section is generally not periodic: it is periodic only when
the discrete conformal map itself is periodic. We therefore replace it
by a gauge-equivalent flat connection \(Z\), which is periodic for
every quasi-periodic discrete conformal map.


\begin{definition}
Two $G$-connections $\phi$ and $\psi$ on a graph $\Gamma$ are
called \emph{gauge-equivalent} if there exists a $G$-valued function
$\theta$ on the vertex set of $\Gamma$ such that
\[
\psi_e
=
\theta_{t(e)}^{-1}\phi_e\theta_{h(e)}
\]
for every oriented edge $e$ of $\Gamma$.
\end{definition}

Gauge-equivalent connections have conjugate holonomies. In particular, a
connection that is gauge-equivalent to a flat connection is itself flat.

\begin{proposition}\label{prop:Z-flat}
\begin{enumerate}
\item Let
\(
f\colon \mathbb Z^2\to\C
\)
be a map sending adjacent lattice points to distinct points. Define a
\(\PGL_2(\mathbb C(t))\)-valued connection on \(\mathbb Z^2\) by
setting, for every oriented edge \(e\),
\begin{equation}\label{eq:Z-edge}
Z_e
:=
\begin{pmatrix}
1 & \varepsilon_e t / z_e\\[0.6ex]
z_e & 1
\end{pmatrix},
\qquad
z_e:=f_{h(e)}-f_{t(e)}.
\end{equation}
Then \(f\) is discrete conformal if and only if \(Z\) is flat.

\item The assignment \(f\mapsto Z\) induces a bijection between
discrete conformal maps modulo translations and flat connections of the
form \eqref{eq:Z-edge}.
\end{enumerate}
\end{proposition}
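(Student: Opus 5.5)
Part (1) is best obtained by a gauge transformation from the flatness result already proved for \(\Pi\), with one direct check at the end. In an affine chart, a point \(a\in\C\) corresponds to the line spanned by \((a,1)^T\) in \(\C^2\). One computes
\[
\proj_a^b=\frac{1}{a-b}\begin{pmatrix} a & -ab\\ 1 & -b\end{pmatrix},
\]
so \(\Pi_e(t)=\id-\eps_e t\proj_a^b\) is an explicit matrix. Take the gauge \(\theta_v=\begin{pmatrix}1& f_v\\0&1\end{pmatrix}\) (possibly composed with a fixed constant matrix, e.g.\ swapping coordinates). This moves the frame so that the tail \(a\) sits at \(0\), making the conjugated edge matrix depend only on \(z_e=b-a\). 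Concretely, \(\theta_a^{-1}\Pi_e\theta_b\) should become, after multiplication by a scalar and conjugation by a constant diagonal or antidiagonal matrix, equal to
\[
\begin{pmatrix}1&\eps_e t/z_e\\ z_e&1\end{pmatrix}
\]
up to the substitution \(t\mapsto\) a simple function of \(t\). Scalars and constant conjugations are harmless in \(\PGL_2(\C(t))\). A reparametrization of the spectral parameter is also harmless, since flatness is checked identically in \(t\).

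Since gauge equivalence preserves flatness, the flatness of \(Z\) is then equivalent to the flatness of \(\Pi\). By the previous proposition this is equivalent to \(f\) being discrete conformal, which proves (1). The main obstacle is getting the exact normalization right, since a naive gauge may yield \(\begin{pmatrix}1-t& \dots\\ \dots\end{pmatrix}\) rather than the symmetric form.

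If the algebra does not match cleanly, I will verify (1) directly instead. Take a square with vertices \(a,b,c,d\) and edges \(z_1=b-a\), \(z_2=c-b\), \(z_3=c-d\), \(z_4=d-a\). Flatness means
\[
Z(z_1,t)Z(z_2,-t)=\lambda\, Z(z_4,-t)Z(z_3,t)
\]
in \(\PGL_2\), where the vertical edges carry \(\eps=-1\). Expanding gives the diagonal entries \(1-t z_2/z_1\) and \(1-t z_1/z_2\) on the left, and \(1+t z_3/z_4\) and \(1+t z_4/z_3\) on the right; the off-diagonal entries are linear in \(t\). Comparing coefficients together with the closure relation \(z_1+z_2=z_4+z_3\) reduces flatness to \(z_1z_3=-z_2z_4\)-type relations, which is exactly the equation \([a,b,c,d]=-1\) after substitution.

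For (2), \(Z\) depends only on the differences \(z_e\), so the assignment \(f\mapsto Z\) factors through translations. The map is injective modulo translations because the lower-left entry recovers \(z_e\). For this to be meaningful in \(\PGL_2\), the matrix must be normalized: its diagonal entries are equal, and we scale them to \(1\), after which the lower-left entry is canonically \(z_e\). For surjectivity, suppose \(Z\) is a flat connection of the form \eqref{eq:Z-edge} with some values \(z_e\). Flatness, read off from the computation above, forces \(z_1+z_2=z_4+z_3\) around each face, for instance from the \(t^0\) off-diagonal entry. Hence \(z\) is an exact \(\C\)-valued cochain on \(\mathbb Z^2\), so \(z=df\) for some \(f\), unique up to translation. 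By part (1) this \(f\) is discrete conformal.
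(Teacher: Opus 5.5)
Your proposal is correct and follows the paper's proof: your gauge \(\theta_v\) composed with the coordinate swap is exactly the paper's \(\Theta_v=\begin{pmatrix} f_v & 1\\ 1 & 0\end{pmatrix}\), and \(Z_e=\Theta_{t(e)}^{-1}\Pi_e(t)\Theta_{h(e)}\) holds exactly, with no scalar factor and no reparametrization of \(t\). The fallback computation is therefore unnecessary; its \((1,1)\) entries also carry sign slips. Part (2), which reads the closure relation off the \(t^0\) term of the lower-left entry of the face holonomy and then integrates the increments, is likewise the paper's argument.
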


\begin{proof}
Let
\[
\Theta_{i,j}
=
\begin{pmatrix}
f_{i,j} & 1\\
1 & 0
\end{pmatrix}.
\]
Then, for every oriented edge \(e\), we have
\[
Z_e
=
\Theta_{t(e)}^{-1}\Pi_e(t)\Theta_{h(e)},
\]
where the connection $\Pi$ is given by \eqref{eq:Piconn}. Thus \(Z\) is gauge-equivalent to \(\Pi\). Since \(\Pi\) is flat if and
only if \(f\) satisfies the cross-ratio equation, the first statement
follows.

For the second statement, it suffices to show that every flat
connection of the form~\eqref{eq:Z-edge} arises from a map
\(f\colon\mathbb Z^2\to\C\).
Such a map exists if and only if the increments \(z_e\) sum to zero
around every elementary quadrilateral. Let
\(e_1,e_2,e_3,e_4\) be the oriented boundary edges of such a
quadrilateral. Then
\[
Z_{e_1}Z_{e_2}Z_{e_3}Z_{e_4}
=
\begin{pmatrix}
* & *\\
z_{e_1}+z_{e_2}+z_{e_3}+z_{e_4}+O(t) & *
\end{pmatrix},
\]
where \(O(t)\) denotes terms divisible by \(t\). Since the connection
is flat, this product is the identity in
\(\PGL_2(\mathbb C(t))\), so
\[
z_{e_1}+z_{e_2}+z_{e_3}+z_{e_4}=0.
\]
Hence the increments \(z_e\) integrate to a map
\(f\colon\mathbb Z^2\to\C\), unique up to an additive constant.
\end{proof}

The resulting zero-curvature representation, given by the connection
\(Z\), coincides with the one obtained from the three-dimensional
consistency of the cross-ratio equation~\cite[p.~227]{BobenkoSuris2008}.

\begin{remark}[Duality] \label{rem:duality}
Given a flat \(G\)-connection on a graph, any automorphism of \(G\)
produces another flat connection. In the case
\(G=\PGL_2\), there is an automorphism represented on matrices by
\(
A\longmapsto\det(A)\,(A^{-1})^T,
\)
that is, by taking the cofactor matrix. Applied to the connection
\eqref{eq:Z-edge}, this gives
\[
Z_e
\longmapsto
\begin{pmatrix}
1 & -z_e\\[1ex]
-\varepsilon_e t / z_e & 1
\end{pmatrix}.
\]
Conjugating further by the diagonal matrix
\(
\operatorname{diag}(t,-1),
\)
and setting
\(
z_e^*:=\varepsilon_e/z_e,
\)
we obtain the connection
\[
Z_e^*
=
\begin{pmatrix}
1 & \varepsilon_e t / z_e^*\\[1ex]
z_e^* & 1
\end{pmatrix}.
\]
Since this is again of the form~\eqref{eq:Z-edge},
Proposition~\ref{prop:Z-flat} implies that the flat connection \(Z^*\)
corresponds to a discrete conformal map \(f^*\), unique up to
translation. The map \(f^*\) is called the \emph{dual} of \(f\). The edge increments of the dual are precisely the variables \(z_e^*\),
which agrees, up to complex conjugation, with the notion of duality
introduced in~\cite{bobenko1996discrete,bobenko1999discrete}. Since duality depends
only on the edge increments, it preserves quasi-periodicity and hence
defines an involution on quasi-periodic discrete conformal maps modulo
translations. Moreover,
\[
(af)^*=a^{-1}f^*+\mathrm{const},
\]
so duality descends to the quotient by affine transformations.
\end{remark}

\subsection{The first integrals}\label{sec:fi}
In this section, we construct the first integrals \(I_j\) of the solution
map \(\mathcal S\) as the coefficients of the trace of the holonomy
matrix associated with the flat connection~\eqref{eq:Z-edge}. We then
show that they descend to the quotient by affine transformations.

Let \(f\) be a quasi-periodic discrete conformal map with period
\(
\per=(\hp,\vp).
\)
By construction, the connection \(Z\) given by \eqref{eq:Z-edge} is \(\per\)-periodic. Hence it
descends to a flat connection on the quotient graph
\(
\mathbb Z^2/\mathbb Z\per,
\)
viewed as a graph embedded in a cylinder. Since the connection is flat, the conjugacy class of its holonomy along
a loop depends only on the homotopy class of the loop, or equivalently,
on its winding number around the cylinder. In particular, the holonomy
is, up to conjugation, the same along every \(\per\)-periodic zigzag
\(
\zeta\colon\mathbb Z\to\mathbb Z^2.
\)
Let
\(
p_i:=f_{\zeta(i)}
\)
be the corresponding quasi-periodic \(n\)-gon, where
\(
n=\hp+\vp.
\)
Then the holonomy along \(\zeta\) is given by
\[
\mathbf Z(t):=Z_1(t)\cdots Z_n(t),
\]
where
\begin{equation}\label{eq:zmatrix}
Z_i(t):=
\begin{pmatrix}
1 & \varepsilon_i t / z_i\\[1ex]
z_i & 1
\end{pmatrix},
\end{equation}
\(z_i:=p_{i+1}-p_i\), and
\[
\varepsilon_i=
\begin{cases}
1,&\text{if }(\zeta_i,\zeta_{i+1})\text{ is horizontal},\\
-1,&\text{if }(\zeta_i,\zeta_{i+1})\text{ is vertical}.
\end{cases}
\]

\begin{proposition}
The conjugacy class of \(\mathbf Z(t)\) in
\(\GL_2(\mathbb C(t))\) is invariant under the solution map
\(\mathcal S\).
\end{proposition}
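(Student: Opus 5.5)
The plan is to prove the statement by showing that each folding in the Coxeter factorization of \(\mathcal S\) (Proposition~\ref{prop:shiftfold}) changes the holonomy \(\mathbf Z(t)\) only by conjugation in \(\GL_2(\mathbb C(t))\). The one thing to be careful about is that flatness is asserted in \(\PGL_2\), whereas the statement is about \(\GL_2\). So the first step is to lift the flatness relation to genuine matrices.

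\textbf{Step 1: flatness holds in \(\GL_2\).} Consider an elementary square with horizontal edges \(e_1,e_3\) and vertical edges \(e_2,e_4\), oriented so that the two monotone paths from the lower-left to the upper-right corner are \(e_1e_2\) and \(e_4e_3\). Flatness in \(\PGL_2(\mathbb C(t))\) (Proposition~\ref{prop:Z-flat}) gives
\[
Z_{e_1}Z_{e_2}=\lambda\, Z_{e_4}Z_{e_3}, \qquad \lambda\in\mathbb C(t)^\times .
\]
Since \(\det Z_e=1-\varepsilon_e t\), both sides have determinant \((1-t)(1+t)\). Hence \(\lambda^2=1\), so \(\lambda=\pm1\) is a constant. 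To fix the sign, set \(t=0\). Both sides are then lower unitriangular, with lower-left entries \(z_{e_1}+z_{e_2}\) and \(z_{e_4}+z_{e_3}\). These coincide because the increments are differences of values of \(f\). Therefore \(\lambda=1\), and the two-step transport along the two sides of each square is equal as an element of \(\GL_2(\mathbb C(t))\).

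\textbf{Step 2: each folding acts by conjugation.} Let \(f\) be the quasi-periodic discrete conformal map determined by the initial data. Since \(Z\) is \(\per\)-periodic, the matrix \(Z_i\) depends only on \(i \bmod n\). Hence \(\mathbf Z(t)\) is the \(\GL_2\)-transport along \(\zeta\) from \(\zeta_0\) to \(\zeta_n=\zeta_0+\per\). An elementary move at vertex \(j\) (together with all its translates) replaces the corner \(\zeta_{j-1}\to\zeta_j\to\zeta_{j+1}\) by the opposite corner of the same square. The new edge matrices are again those of \(Z\) for the same map \(f\).

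\begin{itemize}
\item If \(j\not\equiv 0 \pmod n\), Step 1 shows directly that the product \(Z_1\cdots Z_n\) is unchanged.
\item If \(j\equiv 0\), the basepoint \(\zeta_0\) moves across an edge. Write the new product cyclically starting from the old basepoint and apply Step 1 to the corner at \(\zeta_n\). One gets \(\mathbf Z'(t)=Z_{e}^{-1}\mathbf Z(t) Z_{e}\), where \(e\) is the edge from the old basepoint to the new one. Here \(Z_e\) and \(Z_{\bar e}\) are inverse up to the scalar \(1-\varepsilon_e t\), which cancels in a conjugation.
\end{itemize}

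Equivalently, the transport along any closed \(\per\)-loop changes by conjugation under a change of basepoint and is unchanged under square moves.

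\textbf{Step 3: composition.} By Proposition~\ref{prop:shiftfold}, \(\mathcal S\) is a composition of such moves, carrying \(\zeta\) to \(\zeta'=\zeta+(1,-1)\). Since \(\zeta'\) is a translate of \(\zeta\), it has the same sign sequence \(\varepsilon_i\). Consequently the holonomy along \(\zeta'\) is exactly the matrix \(\mathbf Z(t)\) built from the polygon \(\mathcal S(p)\). By Step 2 this matrix is conjugate to the one built from \(p\).

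The main obstacle is Step 1: removing the scalar ambiguity coming from \(\PGL_2\). The determinant together with the \(t=0\) evaluation should settle it. Beyond that, the only care needed is the bookkeeping of the basepoint when folding at the vertex \(j\equiv 0\).
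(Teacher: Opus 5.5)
Your argument is correct, but it is organized differently from the paper's proof. The paper argues globally. Homotopy invariance of holonomy for the flat \(\PGL_2(\C(t))\)-connection on the cylinder gives \(\mathbf Z'(t)=\lambda(t)A(t)\mathbf Z(t)A(t)^{-1}\). Then \(\det\mathbf Z=\det\mathbf Z'=(1-t)^{T_1}(1+t)^{T_2}\) forces \(\lambda=\pm1\), and \(\tr\mathbf Z(0)=\tr\mathbf Z'(0)=2\) rules out \(\lambda=-1\).

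You use the same two ingredients, the determinant and lower unitriangularity at \(t=0\), but apply them locally on each elementary square. This makes \(Z\) a genuinely flat \(\GL_2\)-connection along monotone paths. You then redo homotopy invariance by hand through the elementary moves of Proposition~\ref{prop:shiftfold}, so the scalar ambiguity never reappears and the conjugating matrix is explicitly a product of edge matrices. The paper's route is shorter: it delegates the move-by-move bookkeeping to the general homotopy-invariance statement and fixes the sign once at the end. Yours is more hands-on and ties invariance directly to the folding factorization.

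Two small inaccuracies in your Step~2 do not affect the conclusion.
\begin{enumerate}
\item With the paper's convention, \(\mathbf Z=Z_1\cdots Z_n\) is the transport from \(\zeta_1\) to \(\zeta_{n+1}\). So the exceptional move is at \(j\equiv 1\), not \(j\equiv 0\).
\item The old and new basepoints are opposite corners of an elementary square, not endpoints of an edge. The conjugating matrix is therefore the transport along a two-edge path: one old edge traversed backwards and one new edge forwards, e.g.\ \(Z_n^{-1}A\), where \(A\) is the matrix of the new edge from \(\zeta_n\) to the moved vertex. The scalar factors again cancel.
\end{enumerate}
Alternatively, you can avoid this case altogether. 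A cyclic rotation of the product is conjugation by a single \(Z_i\), so before each move you can rebase the holonomy at a vertex not being folded.
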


\begin{proof}
The solution map replaces the initial data on a zigzag \(\zeta\) by the
initial data on the adjacent parallel zigzag \(\zeta'\). Accordingly,
it replaces the holonomy matrix \(\mathbf Z(t)\) by the holonomy matrix
\(\mathbf Z'(t)\) associated with \(\zeta'\). Since \(\zeta\) and
\(\zeta'\) are homotopic on the cylinder, \(\mathbf Z(t)\) and
\(\mathbf Z'(t)\) are conjugate in \(\PGL_2(\mathbb C(t))\).

Furthermore,
\(
\det Z_i(t)=1-\varepsilon_i t,
\)
and hence
\[
\det \mathbf Z(t)=(1-t)^{\hp}(1+t)^{\vp}.
\]
The same formula holds for \(\mathbf Z'(t)\). Therefore
\(\mathbf Z(t)\) and \(\mathbf Z'(t)\) have the same determinant.

Since \(\mathbf Z(t)\) and \(\mathbf Z'(t)\) are conjugate in
\(\PGL_2(\mathbb C(t))\), there exist
\(A(t)\in\GL_2(\mathbb C(t))\) and
\(\lambda(t)\in\mathbb C(t)^\times\) such that
\[
\mathbf Z'(t)
=
\lambda(t)\,A(t)\,\mathbf Z(t)\,A(t)^{-1}.
\]
Taking determinants yields
\(
\lambda(t)^2=1,
\)
so \(\lambda(t)=\pm1\).

Finally, each matrix \(Z_i(t)\) is lower unitriangular at
\(t=0\). Hence the same is true for both
\(\mathbf Z(t)\) and
\(\mathbf Z'(t)\). In particular, both have trace \(2\) at
\(t=0\). Therefore \(\mathbf Z'(t)\) cannot be conjugate to
\(-\mathbf Z(t)\). It follows that
\(\mathbf Z'(t)\) is conjugate to
\(\mathbf Z(t)\) in
\(\GL_2(\mathbb C(t))\).
\end{proof}

\begin{corollary}\label{cor:Ik}
Let
\[
\tr\mathbf Z(t)=\sum_{k\ge0} I_k t^k.
\]
Then the coefficients \(I_k\) are
\(\mathcal S\)-invariant functions on \(\mathcal P_n\).
\end{corollary}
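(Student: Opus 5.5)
The corollary follows almost at once from the preceding proposition. I would argue as follows.

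First, view $\mathbf Z(t)=Z_1(t)\cdots Z_n(t)$ as a matrix whose entries are polynomials in $t$. Its coefficients are Laurent polynomials in the edge variables $z_i=p_{i+1}-p_i$. Hence $\tr\mathbf Z(t)$ is a polynomial in $t$ of degree at most $n$, and each coefficient $I_k$ is a regular function on $\mathcal P_n$. Indeed, $I_k$ is a function on $\mathcal P_n/\mathbb C\cong(\mathbb C^\times)^n$, because the $z_i$ are translation invariant.

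Next, apply the preceding proposition. For initial data $p$ on the zigzag $\zeta$ in the domain of $\mathcal S$, the holonomy $\mathbf Z'(t)$ computed from $\mathcal S(p)$ along $\zeta$ is conjugate to $\mathbf Z(t)$ in $\GL_2(\mathbb C(t))$. The point to check is that $\mathbf Z'(t)$, built from the data on $\zeta'=\zeta+(1,-1)$, is given by the same formula \eqref{eq:zmatrix} with the same signs $\varepsilon_i$ but with $z_i$ replaced by the edge vectors of $\mathcal S(p)$. This holds because $\zeta'$ is a translate of $\zeta$, so its edges have the same horizontal/vertical pattern. Thus $\mathbf Z'(t)$ is exactly the matrix $\mathbf Z(t)$ evaluated at the point $\mathcal S(p)$.

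Finally, since the trace is a conjugation invariant in $\GL_2(\mathbb C(t))$, we have the equality of rational functions $\tr\mathbf Z(t)\big|_{\mathcal S(p)}=\tr\mathbf Z(t)\big|_{p}$. Both sides are polynomials in $t$, so comparing coefficients of $t^k$ gives $I_k\circ\mathcal S=I_k$ on the open dense domain of $\mathcal S$, and hence as rational functions.

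The only mild subtlety, and the step I would take care to state explicitly, is the identification of the holonomy on $\zeta'$ with $\mathbf Z$ evaluated at $\mathcal S(p)$: the indexing along $\zeta'$ must start at $\zeta'_1=\zeta_1+(1,-1)$, so that the cyclic order of factors matches. A different starting point would only produce a cyclic shift of the product, which changes $\mathbf Z(t)$ by conjugation and leaves the trace unchanged.
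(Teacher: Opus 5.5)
Your proposal is correct and follows the paper's argument. The paper's proof is the single observation that the trace is invariant under conjugation in $\GL_2(\mathbb C(t))$, applied to the preceding proposition; your extra bookkeeping is accurate and is left implicit in the paper: the holonomy along $\zeta'=\zeta+(1,-1)$ is the same matrix formula with the same signs $\varepsilon_i$ evaluated at $\mathcal S(p)$, and the $I_k$ are then compared coefficient by coefficient (apart from the slip of writing ``along $\zeta$'' where you mean ``along $\zeta'$'').
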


\begin{proof}
The trace is invariant under conjugation in
\(\GL_2(\mathbb C(t))\).
\end{proof}

We now determine the number of nontrivial coefficients \(I_k\).
First, note that since \(\mathbf Z(0)\) is lower unitriangular, we have
\(I_0=2\). Furthermore, the following proposition shows that
\(I_k=0\) for \(k>\lfloor n/2\rfloor\).

\begin{proposition}\label{prop:Z-form}
We have
\[
\mathbf Z(t)=
\begin{pmatrix}
A(t)&B(t)\\
C(t)&D(t)
\end{pmatrix},
\]
where
\[
\deg A,\deg D\le\left\lfloor\frac n2\right\rfloor,
\qquad
\deg B\le\left\lfloor\frac{n+1}{2}\right\rfloor,
\qquad
\deg C\le\left\lfloor\frac{n-1}{2}\right\rfloor.
\]
\end{proposition}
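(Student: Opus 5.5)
We prove the degree bounds by induction on the number of factors, working with partial products. Write each factor as
\[
Z_i(t)=\begin{pmatrix}1&0\\ z_i&1\end{pmatrix}+t\begin{pmatrix}0&\varepsilon_i/z_i\\0&0\end{pmatrix}.
\]
Equivalently, assign each matrix entry a "weight": the $(1,2)$ entry of $Z_i$ carries one power of $t$, and the other entries carry none.

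Expanding the product $Z_1\cdots Z_m$, each $(r,s)$ entry is a sum over index paths $r=r_0,r_1,\dots,r_m=s$ in $\{1,2\}$. A step $r_{k-1}\to r_k$ contributes a factor of $t$ exactly when it is a $1\to 2$ transition. The $t$-degree of a path is therefore the number of $1\to2$ transitions in the sequence $r_0,\dots,r_m$. This is the key combinatorial observation; the rest is counting.

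Count the maximal number of $1\to 2$ steps in a sequence of length $m+1$ with fixed endpoints. Each $1\to2$ transition must be separated from the next by a $2\to1$ transition, so the steps alternate between the two types. With $u$ transitions of type $1\to2$ and $w$ of type $2\to1$, we have $u-w=[s=2]-[r=2]$ and $u+w\le m$.

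For $(r,s)=(1,1)$ or $(2,2)$ this gives $u=w$, so $2u\le m$ and $u\le\lfloor m/2\rfloor$. For $(1,2)$, that is the entry $B$, we get $u=w+1$, so $2u-1\le m$ and $u\le\lfloor (m+1)/2\rfloor$. For $(2,1)$, the entry $C$, we get $w=u+1$, so $u\le\lfloor (m-1)/2\rfloor$. Setting $m=n$ yields exactly the stated bounds on $\deg A,\deg D,\deg B,\deg C$.

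Alternatively, the same argument can be phrased as an induction. Multiplying on the right by $Z_{m+1}$ gives new entries
\[
A'=A+zB,\quad B'=\tfrac{\varepsilon t}{z}A+B,\quad C'=C+zD,\quad D'=\tfrac{\varepsilon t}{z}C+D,
\]
and the floor bounds propagate directly: for instance, $\deg A'\le\max(\lfloor m/2\rfloor,\lfloor (m+1)/2\rfloor)=\lfloor (m+1)/2\rfloor$. The induction starts from $m=1$, and I would present this version since it is shorter to write.

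There is no real obstacle here; the only care needed is the floor bookkeeping at the parity steps. The mild point to check is the base case, and that the entries are genuinely polynomials in $t$, which holds because each $Z_i$ is linear in $t$. The consequence $I_k=0$ for $k>\lfloor n/2\rfloor$ then follows from $\tr\mathbf Z=A+D$.
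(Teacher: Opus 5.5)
Your proof is correct and is essentially the paper's argument. The paper bounds the entry degrees of $\mathbf Z(t)$ by the max-plus power $Q^{\otimes n}$ of the single-factor degree matrix $Q=\begin{pmatrix}0&1\\0&0\end{pmatrix}$, computed by a direct induction; your count of $1\to2$ transitions over index paths is exactly that max-plus product written out, and your recursion for $A',B',C',D'$ is the same induction with the floor bookkeeping done correctly.
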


\begin{proof}
The entries of each factor \(Z_i(t)\) have degrees
\[
Q:=
\begin{pmatrix}
0&1\\
0&0
\end{pmatrix}.
\]
Therefore the degrees of the entries of
\(
\mathbf Z(t)=Z_1(t)\cdots Z_n(t)
\)
are bounded by the max-plus power \(Q^{\otimes n}\). A direct induction
shows that
\[
Q^{\otimes n}
=
\begin{pmatrix}
\lfloor n/2\rfloor & \lfloor (n+1)/2\rfloor\\
\lfloor (n-1)/2\rfloor & \lfloor n/2\rfloor
\end{pmatrix},
\]
which proves the stated degree bounds. 
\end{proof}

Since \(I_0=2\), we obtain
\(
\lfloor n/2\rfloor
\)
\(\mathcal S\)-invariant functions on \(\mathcal P_n\), namely
\(
I_1,\dots,I_{\lfloor n/2\rfloor}.
\)
\begin{proposition}
The functions \(I_j\) descend to well-defined functions on the quotient
\(
\mathcal P_n/\Aff(\mathbb C).
\)
\end{proposition}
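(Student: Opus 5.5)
The plan is to show separately that the coefficients \(I_j\) are invariant under translations and under rescalings \(p_i\mapsto ap_i\). Translation invariance is immediate, since \(\mathbf Z(t)\) depends on the polygon only through the edge vectors \(z_i=p_{i+1}-p_i\). Translations do not change these. So the \(I_j\) are already functions on \(\mathcal P_n/\mathbb C\cong(\mathbb C^\times)^n\), and it remains to handle the simultaneous rescaling \(z_i\mapsto az_i\).

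For rescaling, I will look for a conjugation that absorbs the scalar \(a\). Take \(D_a:=\operatorname{diag}(1,a)\) and compute
\[
D_a\,Z_i(t)\,D_a^{-1}
=
\begin{pmatrix}
1 & \varepsilon_i t/(a z_i)\\[0.6ex]
a z_i & 1
\end{pmatrix},
\]
which is exactly the matrix \eqref{eq:zmatrix} with \(z_i\) replaced by \(az_i\). The conjugating matrix \(D_a\) is the same for every \(i\), so the conjugations telescope in the product. This gives
\[
\mathbf Z_{ap}(t)=D_a\,\mathbf Z_p(t)\,D_a^{-1}.
\]
Taking traces yields \(\tr\mathbf Z_{ap}(t)=\tr\mathbf Z_p(t)\), and hence \(I_j(ap)=I_j(p)\) for every \(j\).

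Combining the two invariances, each \(I_j\) is constant on \(\Aff(\mathbb C)\)-orbits. The formula shows that it is a Laurent polynomial in the \(z_i\) of total degree zero. It can therefore be written in the coordinates \(y_i=z_i/z_{i-1}\) on the quotient variety \(\{y_1\cdots y_n=1\}\), so it descends as a regular function. The only step that needs any thought is choosing the right conjugating matrix \(D_a\); once it is found, the rest is a direct check.
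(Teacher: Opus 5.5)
Your proposal is correct and follows the paper's proof: you observe that translations leave the edge vectors unchanged, and that rescaling by \(a\) conjugates every \(Z_i(t)\), and hence \(\mathbf Z(t)\), by the same matrix \(\operatorname{diag}(1,a)\), so the trace coefficients are invariant. Your closing remark on writing the \(I_j\) in the coordinates \(y_i\) is a harmless addition; the paper does this explicitly in the next proposition.
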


\begin{proof}
Under an affine transformation \(z\mapsto az+b\), all edge vectors
\(z_i\) are multiplied by the same scalar \(a\). Consequently, each
matrix \(Z_i(t)\) is conjugated by the diagonal matrix
\[
\begin{pmatrix}
1&0\\
0&a
\end{pmatrix}.
\]
Hence the holonomy matrix \(\mathbf Z(t)\) is conjugated by the same
matrix. Therefore its trace, and thus each coefficient \(I_j\), is
invariant under affine transformations.
\end{proof}
We now express the functions \(I_j\) in terms of the coordinates
\[
y_i:=\frac{z_i}{z_{i-1}}
\]
on \(\mathcal P_n/\Aff(\mathbb C)\). 
\begin{proposition}\label{prop:Iy}
Let
\begin{equation}\label{eq:ymatrix}
Y_i(t):=
\begin{pmatrix}
1&\varepsilon_{i-1} y_i t\\
1&y_i
\end{pmatrix},
\end{equation}
and
\[
\mathbf Y(t):=
Y_1(t)\cdots Y_n(t).
\]
Then the integrals
\(
I_1,\dots,I_{\lfloor n/2\rfloor}
\)
are precisely the coefficients of powers of \(t\) in
\(
\tr\mathbf Y(t).
\)
\end{proposition}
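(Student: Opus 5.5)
The plan is to show that $\tr\mathbf Z(t)=\tr\mathbf Y(t)$ \emph{exactly}, as polynomials in $t$. The route is to split each factor $Z_i(t)$ of \eqref{eq:zmatrix} into a diagonal part and a remainder, regroup the factors, and use cyclicity of the trace. Since Corollary~\ref{cor:Ik} defines the $I_k$ as the coefficients of $\tr\mathbf Z(t)$, this equality gives the claim directly. Throughout, indices are taken modulo $n$, with $z_0=z_n$ and $\varepsilon_0=\varepsilon_n$, as forced by quasi-periodicity ($z_{i+n}=z_i$). This matches the convention implicit in \eqref{eq:ymatrix} for $Y_1$, where $y_1=z_1/z_n$ and $\varepsilon_{0}=\varepsilon_n$.

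First, factor each $Z_i(t)$ as
\[
Z_i(t)=D_i N_i,\qquad
D_i:=\begin{pmatrix}1&0\\0&z_i\end{pmatrix},\qquad
N_i:=\begin{pmatrix}1&\varepsilon_i t/z_i\\ 1&1/z_i\end{pmatrix}.
\]
This is checked by direct multiplication: the second row of $D_iN_i$ is $z_i$ times the second row of $N_i$, namely $(z_i,1)$, and the first row is unchanged.

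Second, compute the product of the remainder of one factor with the diagonal part of the next:
\[
N_{i-1}D_i=\begin{pmatrix}1&\varepsilon_{i-1}t\,z_i/z_{i-1}\\ 1& z_i/z_{i-1}\end{pmatrix}
=\begin{pmatrix}1&\varepsilon_{i-1}y_i t\\ 1&y_i\end{pmatrix}=Y_i(t).
\]

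Third, write
\[
\mathbf Z(t)=D_1N_1D_2N_2\cdots D_nN_n.
\]
Moving the last factor $N_n=N_0$ to the front does not change the trace, so
\[
\tr\mathbf Z(t)=\tr\big(N_0D_1\,N_1D_2\cdots N_{n-1}D_n\big)=\tr\big(Y_1(t)\cdots Y_n(t)\big)=\tr\mathbf Y(t).
\]

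The coefficients of $\tr\mathbf Y(t)$ are therefore $I_0=2$ together with $I_1,\dots,I_{\lfloor n/2\rfloor}$, and all higher coefficients vanish by Proposition~\ref{prop:Z-form}. The only step needing care is the index bookkeeping at the wrap-around, where $N_n$ pairs with $D_1$. This works precisely because both $z_i$ and $\varepsilon_i$ are $n$-periodic. As a consistency check, $\tr\mathbf Y(t)$ depends only on the $y_i$, in agreement with the earlier proposition that the $I_j$ descend to $\mathcal P_n/\Aff(\mathbb C)$.
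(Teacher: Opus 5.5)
Your proof is correct and is essentially the paper's argument. Your regrouped factor $N_{i-1}D_i$ is exactly the paper's gauge transform $\Lambda_{i-1}^{-1}Z_{i-1}(t)\Lambda_i$ with $\Lambda_i=\mathrm{diag}(1,z_i)$, and your use of trace cyclicity at the wrap-around amounts to the paper's observation that the telescoped product $Y_1(t)\cdots Y_n(t)$ is conjugate to $\mathbf Z(t)$.
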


\begin{proof}
Let
\(
\Lambda_i:=
\mathrm{diag}(1, z_i). 
\)
Then
\[
Y_i(t)=
\Lambda_{i-1}^{-1}Z_{i-1}(t)\Lambda_i.
\]

Therefore,
\(
\mathbf Y(t)=
Y_1(t)\cdots Y_n(t)
\)
is conjugate to \(\mathbf Z(t) = Z_1(t)\cdots Z_n(t)\). Hence
\[
\tr\mathbf Y(t)=\tr\mathbf Z(t),
\]
and the result follows from the definition of the functions \(I_j\).
\end{proof}


\begin{remark}
The first integral \(I_1\) admits the following geometric
interpretation:
\[
I_1=\vp-\hp+\Delta\Delta^*,
\]
where $(\hp, \vp) $ is the period, \(\Delta\) is the monodromy of the quasi-periodic discrete
conformal map \(f\), and \(\Delta^*\) is the monodromy of its dual
\(f^*\); see Remark~\ref{rem:duality}. Although neither \(\Delta\) nor \(\Delta^*\) is invariant under affine
transformations \(f\mapsto af+b\), they transform as
\[
\Delta\mapsto a\Delta,
\qquad
\Delta^*\mapsto a^{-1}\Delta^*.
\]
 Consequently, the product
\(\Delta\Delta^*\) is a well-defined function on the quotient
\(\mathcal P_n/\Aff(\mathbb C)\). Its invariance under the solution map is immediate: the monodromy
\(\Delta\) is preserved by construction, while \(\Delta^*\) is preserved
because the solution map commutes with duality.
\end{remark}

\subsection{Independence of integrals}\label{sec:iqp}

In this section, we prove that the
\(\mathcal S\)-invariant functions
\(
I_1,\dots,I_{\lfloor n/2\rfloor}
\)
are algebraically independent as functions on
\(
\mathcal P_n/\Aff(\mathbb C).
\)
Equivalently, we prove the following statement.

\begin{proposition}\label{prop:I-dominant}
Fix an arbitrary sign pattern
\(
\varepsilon_1,\dots,\varepsilon_n\in\{\pm1\},
\)
and let \(Z_i(t)\) be the matrices defined by
\eqref{eq:Z-edge}. Let the functions $I_j$ be defined by
\[
\tr\bigl(Z_1(t)\cdots Z_n(t)\bigr)
=
2+\sum_{j=1}^{\lfloor n/2\rfloor}I_jt^j.
\]
Then the map
\[
(\C^\times)^n
\longrightarrow
\C^{\lfloor n/2\rfloor},
\qquad
(z_1,\dots,z_n)
\longmapsto
(I_1,\dots,I_{\lfloor n/2\rfloor})
\]
is dominant.
\end{proposition}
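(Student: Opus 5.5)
Throughout, set \(m:=\lfloor n/2\rfloor\). The plan is to expand the trace combinatorially and then use a degeneration argument. A polynomial map is dominant as soon as its Jacobian has full rank \(m\) at a single point. In turn, that follows if suitable leading forms of the \(I_k\), taken along a one-parameter degeneration, are algebraically independent. I expect the main difficulty to be that the coefficients produced by the degeneration are sums of signs \(\varepsilon_i\), which could vanish for an unlucky sign pattern.

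\emph{Step 1: expansion of the trace.} Write each \(Z_i(t)=\id+\begin{pmatrix}0&\varepsilon_i t/z_i\\ z_i&0\end{pmatrix}\) and expand the product. The off-diagonal factors that contribute to the trace must alternate between upper and lower entries, cyclically, so there is an even number \(2k\) of them. This gives
\[
I_k=\sum_{s_1<\dots<s_{2k}}\Bigl(\prod_{r}\frac{\varepsilon_{s_{2r-1}}\,z_{s_{2r}}}{z_{s_{2r-1}}}+\prod_{r}\frac{\varepsilon_{s_{2r}}\,z_{s_{2r-1}}}{z_{s_{2r}}}\Bigr),
\]
with the sum over \(2k\)-element subsets of \(\{1,\dots,n\}\). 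In particular, each \(I_k\) is a Laurent polynomial in the \(z_i\) that is homogeneous of degree \(0\).

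\emph{Step 2: first degeneration.} Put \(z_{2j-1}=1\) and \(z_{2j}=x_j/s\) for \(1\le j\le m\); when \(n\) is odd, also set \(z_n=1\). As \(s\to0\), a term of \(I_k\) has order \(s^{-(\#\text{even indices in numerator slots})+(\#\text{even indices in denominator slots})}\). This exponent is maximal, equal to \(-k\), exactly when all numerators sit at even indices and all denominators at odd indices. So \(s^kI_k\to f_k(x)\), where
\[
f_k=\sum_{|J|=k}c_J\prod_{j\in J}x_j,
\]
and \(c_J\) is a product, over the \(k\) cyclic gaps between consecutive elements \(2j_{r-1}<2j_r\) of \(2J\), of the sum of \(\varepsilon_i\) over the odd indices \(i\) in that gap. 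A standard argument then applies: if \(f_1,\dots,f_m\) are algebraically independent, so are the \(I_k\). Indeed, after the rescaling \(s^kI_k\), a vanishing Jacobian minor of the \(I_k\) along the whole curve would force the corresponding minor of the \(f_k\) to vanish.

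\emph{Step 3: second degeneration and the main obstacle.} Next, let \(x_1\gg x_2\gg\dots\gg x_m\), say \(x_j=u^{-(m-j+1)}y_j\) with \(u\to0\). Then the leading form of \(f_k\) is \(c_{\{1,\dots,k\}}\,x_1\cdots x_k\). The monomials \(x_1,\ x_1x_2,\ \dots,\ x_1\cdots x_m\) have a triangular, hence invertible, Jacobian, so independence follows provided every \(c_{\{1,\dots,k\}}\neq0\). The inner gaps each contain a single odd index, so they contribute \(\pm1\). The problematic factor is the wrap-around gap, whose coefficient is \(\sum\varepsilon_i\) over the odd indices from \(2k+1\) to \(n\) together with \(1\); this sum may vanish.

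To fix this I would exploit the freedom left in Steps 2 and 3. One option is to cyclically relabel the vertices and choose which pairs \((i,i+1)\) play the role of \((2j-1,2j)\). Another is to choose the order in which the \(x_j\) dominate, for instance nesting them from a chosen starting pair outward, so that the wrap-around gap for each \(k\) contains an odd number of odd indices and hence has a nonzero sign sum. A third is to swap the roles of the two parities, which means using the second product in Step 1. If no uniform choice works, I would fall back on a finer weighting, \(z_i=s^{w_i}\) with generic integer weights. The goal there is to make each \(I_k\) have a single leading monomial whose coefficient is \(\pm1\), and then check that the resulting \(m\) exponent vectors are linearly independent.
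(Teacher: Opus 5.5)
Your strategy (expand the trace, pass to initial forms along a degeneration, and check that the initial forms are independent) is sound. However, the proof stops exactly at the decisive step, and the degeneration you carry out fails for some sign patterns. The failure already occurs in Step~2, not only in the wrap-around factor of Step~3. For $k=1$ there is a single cyclic gap containing all odd indices, so $c_{\{j\}}$ does not depend on $j$ and
$f_1=\bigl(\sum_{i\ \mathrm{odd}}\varepsilon_i\bigr)(x_1+\cdots+x_m)$.
This vanishes identically whenever the odd signs sum to zero, and no ordering or nesting of the $x_j$ in Step~3 can repair that. For $n$ even, cyclic relabelling and swapping parities only interchange the roles of odd and even indices. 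Take $\varepsilon=(+,+,-,-)$, the sign pattern of an honest $(2,2)$-periodic zigzag: then $\varepsilon_1+\varepsilon_3=\varepsilon_2+\varepsilon_4=0$, so $f_1\equiv 0$ under each of the first three options you list. That leaves the announced fallback to a finer weighting, where you state the goal but do not show it can be met.

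The missing idea is that $I_k$ has no internal cancellation. In your Step~1 formula, the exponent vector of $\prod_r z_{s_{2r}}/z_{s_{2r-1}}$ (resp.\ of its inverse) has support $S$ and signs alternating along $S$, starting with $-1$ (resp.\ $+1$) at $\min S$. Hence the $2\binom{n}{2k}$ monomials are pairwise distinct, and each has coefficient a product of $k$ signs, i.e.\ $\pm1$. The sign sums $c_J$ are an artifact of the weight $(0,-1,0,-1,\dots)$, which puts many distinct monomials at the same order. So any weight whose order is minimized by a single monomial yields a nonzero initial monomial, whatever $\varepsilon$ is.

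Concretely, substitute $z_i\mapsto s^{w_i}z_i$ with $w_{2j}=j-A$ for $A$ large, $w_1=1$, and $w_i=0$ for the remaining odd $i$:
\begin{itemize}
\item the large $A$ forces all numerators to be even and all denominators odd;
\item the term $j$ forces $J=\{1,\dots,k\}$, which fixes the inner denominators $z_3,\dots,z_{2k-1}$;
\item $w_1=1$ selects $z_1$ in the wrap-around gap.
\end{itemize}
Thus the initial form of $I_k$ is $\varepsilon_1\varepsilon_3\cdots\varepsilon_{2k-1}\prod_{r=1}^k z_{2r}/z_{2r-1}$. These $m$ exponent vectors are triangular, so your initial-form argument gives algebraic independence and hence dominance.

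With this supplement your route becomes a valid, sign-independent alternative to the paper's proof, which never expands the trace. The paper factors the map through the space $\mathcal Z_n$ of matrix polynomials with prescribed degree bounds and determinant. It then proves the trace map on $\mathcal Z_n$ surjective, and proves the product map dominant by peeling off the last factor through $\ker\mathbf Z(\varepsilon_n)$.
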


\begin{remark}
Dominance of a morphism to an affine space is equivalent to algebraic
independence of its components. Thus the functions
\(
I_1,\dots,I_{\lfloor n/2\rfloor}
\)
are algebraically independent as functions of
\(z_1,\dots,z_n\). Since they are invariant under affine
transformations, they descend to algebraically independent functions on
\(
\mathcal P_n/\Aff(\mathbb C).
\)
\end{remark}
\noindent Let
$$
Q(t):= (1-\varepsilon_1t)\cdots(1-\varepsilon_nt)
$$
and
\[
m:=\lfloor n/2\rfloor,\qquad
\overline m:=\lfloor(n+1)/2\rfloor,\qquad
\underline m:=\lfloor(n-1)/2\rfloor.
\]
Define
\[
\mathcal Z_n:=
\left\{
\left.
\begin{pmatrix}
1+a_1t+\cdots+a_mt^m &
b_1t+\cdots+b_{\overline m}t^{\overline m}\\[1ex]
c_0+c_1t+\cdots+c_{\underline m}t^{\underline m} &
1+d_1t+\cdots+d_mt^m
\end{pmatrix}
\;\right|\;
\det=Q(t)
\right\}.
\]
To prove Proposition~\ref{prop:I-dominant}, we factor the map
\[
(z_1,\dots,z_n)\longmapsto(I_1,\dots,I_{\lfloor n/2\rfloor})
\]
through \(\mathcal Z_n\):
\[
(\C^\times)^n
\longrightarrow
\mathcal Z_n
\longrightarrow
\C^{\lfloor n/2\rfloor}.
\]
Here the first map is given by
\[
(z_1,\dots,z_n)
\longmapsto
\mathbf Z(t):=Z_1(t)\cdots Z_n(t),
\]
with \(Z_i(t)\) defined by~\eqref{eq:zmatrix}, and the second map is
given by
\[
\mathbf Z(t)\longmapsto(I_1,\dots,I_{\lfloor n/2\rfloor}),
\]
with $I_j$ defined by
\[
\tr \mathbf Z(t)=2+\sum_{j=1}^{\lfloor n/2\rfloor} I_j t^j.
\]
\begin{lemma}\label{lem:trace-surjectivity}
The trace map
\[
\mathcal Z_n\longrightarrow \mathbb C^{\lfloor n/2\rfloor},
\qquad
\mathbf Z(t)\longmapsto (I_1,\dots,I_{\lfloor n/2\rfloor}),
\]
is surjective.
\end{lemma}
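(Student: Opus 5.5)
To prove surjectivity I would construct, for any prescribed \((I_1,\dots,I_m)\in\C^m\) with \(m=\lfloor n/2\rfloor\), an explicit matrix in \(\mathcal Z_n\) with these trace coefficients. The idea is to choose the diagonal entries first and then obtain the off-diagonal entries by factoring a polynomial. Write \(\tau(t):=2+\sum_{j=1}^m I_jt^j\). Pick a polynomial \(A(t)=1+a_1t+\dots+a_mt^m\) and set \(D:=\tau-A\), so that \(D(0)=1\), \(\deg D\le m\), and \(A+D=\tau\) automatically. It then remains to find \(B\) and \(C\) with
\[
BC=R:=AD-Q,\qquad B(0)=0,\qquad \deg B\le\overline m,\qquad \deg C\le\underline m.
\]
Since \(A(0)D(0)=1=Q(0)\), we have \(R(0)=0\). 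Hence \(R=tR'\) for a polynomial \(R'\), and we need to factor \(R'=\tilde B\,C\) with \(B=t\tilde B\), \(\deg\tilde B\le\overline m-1\), and \(\deg C\le\underline m\). Over \(\C\), every polynomial of degree at most \((\overline m-1)+\underline m\) splits into linear factors. Distributing these factors between \(\tilde B\) and \(C\) within the two degree budgets is therefore always possible; if \(R'=0\), take \(B=C=0\).

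The main step is thus to ensure \(\deg R'\le(\overline m-1)+\underline m\), which splits into two cases by parity.

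For \(n\) odd, \(\overline m-1+\underline m=2m=n-1\). Since \(\deg R\le n\), we get \(\deg R'\le n-1\), and no condition on \(A\) is needed.

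For \(n\) even, \(\overline m-1+\underline m=n-2\), so the coefficients of \(t^n\) and \(t^{n-1}\) in \(R\) must vanish. This is where the free choice of \(A\) is used.
\begin{itemize}
\item The \(t^n\)-coefficient of \(AD\) is \(a_m(I_m-a_m)\). We need it to equal the leading coefficient \(\prod_i(-\varepsilon_i)=\pm1\) of \(Q\). This is a quadratic equation in \(a_m\) and always has a solution.
\item The \(t^{n-1}\)-coefficient of \(AD\) is \(a_m d_{m-1}+a_{m-1}d_m\) with \(d_m=I_m-a_m\) and \(d_{m-1}=I_{m-1}-a_{m-1}\). It is affine in \(a_{m-1}\) with slope \(d_m-a_m=I_m-2a_m\). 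The product of the two roots of the quadratic in \(a_m\) is \(\mp1\ne0\), so the roots are distinct and at least one of them satisfies \(I_m\ne 2a_m\). With that root, \(a_{m-1}\) can be solved for so that the \(t^{n-1}\)-coefficient matches that of \(Q\).
\end{itemize}
(When \(n=2\), \(m=1\) and \(a_{m-1}=a_0=1\) is fixed. One then checks the single remaining coefficient directly, which is a small explicit computation, and the same quadratic-root argument applies.) The remaining coefficients \(a_1,\dots,a_{m-2}\) are arbitrary.

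With these choices, the matrix \(\begin{pmatrix}A&B\\C&D\end{pmatrix}\) has the degree pattern required in \(\mathcal Z_n\): \(A(0)=D(0)=1\) and \(B(0)=0\). Its determinant is \(AD-BC=Q\), and its trace is \(\tau\). This proves surjectivity.

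I expect the only delicate points to be the bookkeeping of the degree budgets in the two parity cases and the nondegeneracy of the linear equation for \(a_{m-1}\) when \(n\) is even. The small case \(n=2\) should be checked by hand.
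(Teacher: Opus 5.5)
Your overall strategy is the same as the paper's: fix the diagonal, set \(D\) equal to the prescribed trace minus \(A\), and factor \(R=AD-Q\) (which vanishes at \(t=0\)) as \(BC\). The odd case is fine. The even case, however, has an off-by-one error. It makes you impose a condition that is unnecessary and, in general, cannot be met.

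Since \(R=tR'\), the requirement \(\deg R'\le(\overline m-1)+\underline m=n-2\) is equivalent to \(\deg R\le n-1\), not to \(\deg R\le n-2\). Indeed, for \(n=2m\) you have \(\deg B\le m\) and \(\deg C\le m-1\), so \(BC\) can have degree \(n-1\). Only the \(t^n\)-coefficient of \(R\) has to vanish.

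Your argument for killing the \(t^{n-1}\)-coefficient is also wrong. A nonzero product of the roots of \(a_m^2-I_ma_m+\varepsilon=0\), with \(\varepsilon=\prod_i\varepsilon_i\), does not make the roots distinct. When \(I_m^2=4\varepsilon\) there is a double root \(a_m=I_m/2\), and the slope \(I_m-2a_m\) vanishes. The \(t^{n-1}\)-coefficient of \(AD\) is then \(a_mI_{m-1}\) regardless of \(a_{m-1}\), so your construction fails for generic \(I_{m-1}\). For \(n=2\) it fails for all but one value of \(I_1\): that coefficient is just \(I_1\), and no free parameter remains.

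This is not a harmless corner case. With \(\varepsilon_i=(-1)^i\) one has \(\varepsilon=(-1)^m\), so the locus \(I_m^2=4\varepsilon\) is exactly the union of the two components \(I_m=2\nu(\sqrt{-1})^m\), \(\nu=\pm1\). Proposition~\ref{prop:I-dominant-leaf} later uses surjectivity of this very map on exactly that locus.

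The fix is to drop the spurious condition. Choose \(a_m\) with \(a_m(I_m-a_m)=\varepsilon\), which is always solvable, and leave the other \(a_j\) arbitrary; the paper simply takes \(A=1+ct^m\). Then \(\deg R\le n-1\), and you factor \(R=t\tilde B\,C\) as you describe. With this correction your argument is precisely the paper's proof.
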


\begin{proof}
Let
\[
T(t)=2+\sum_{j=1}^{\lfloor n/2\rfloor} I_j t^j
\]
be arbitrary. First assume \(n=2m+1\). Choose
\[
A(t)=1,\qquad D(t)=T(t)-1.
\]
Then \(A(0)=D(0)=1\) and \(A(t)+D(t)=T(t)\). Moreover,
\(
A(t)D(t)-Q(t)
\)
has zero constant term and degree at most \(2m+1\). Therefore we may
factor it as
\[
A(t)D(t)-Q(t)=B(t)C(t),
\]
with
\[
B(0)=0,\qquad \deg B\le m+1,\qquad \deg C\le m.
\]
Now assume \(n=2m\). Choose
\[
A(t)=1+ct^m,\qquad D(t)=T(t)-1-ct^m.
\]
Then again \(A(0)=D(0)=1\) and \(A(t)+D(t)=T(t)\). Let
\(
\varepsilon:=\varepsilon_1 \cdots \eps_{2m}
\)
be the coefficient of \(t^{2m}\) in \(Q(t)\). Then the coefficient of
\(t^{2m}\) in \(A(t)D(t)-Q(t)\) is
\(
c(I_m-c)-\varepsilon.
\)
Choose \(c\) so that
\[
c(I_m-c)=\varepsilon.
\]
Then
\[
\deg\bigl(A(t)D(t)-Q(t)\bigr)\le 2m-1.
\]
Also, \(A(t)D(t)-Q(t)\) has zero constant term. Therefore we may factor
\[
A(t)D(t)-Q(t)=B(t)C(t),
\]
with
\[
B(0)=0,\qquad \deg B\le m,\qquad \deg C\le m-1.
\]
In both cases,
\[
\mathbf Z(t)=
\begin{pmatrix}
A(t)&B(t)\\
C(t)&D(t)
\end{pmatrix}
\]
belongs to \(\mathcal Z_n\), and by construction
\[
\tr \mathbf Z(t)=T(t).
\]
Hence the trace map is surjective.
\end{proof}
Define
\[
\mathcal Z_n^\circ
:=
\left\{
\mathbf Z(t)\in\mathcal Z_n
\;\middle|\;
\mathbf Z(1)\neq0,\ \mathbf Z(-1)\neq0
\right\}.
\]
\begin{lemma}\label{lem:right-factor}
Let \(\mathbf Z(t)\in\mathcal Z_n^\circ\). Assume that
\[
\ker \mathbf Z(\varepsilon_n)
=
\operatorname{span}
\begin{pmatrix}
1\\
-z_n
\end{pmatrix}
\]
for some \(z_n\in\mathbb C^\times\). Then
\[
\mathbf Z(t)=\widetilde{\mathbf Z}(t)Z_n(t),
\]
where \(Z_n(t)\) is given by \eqref{eq:zmatrix}, and
\(\widetilde{\mathbf Z}(t)\in\mathcal Z_{n-1}^\circ\).
\end{lemma}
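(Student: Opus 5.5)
The plan is to construct $\widetilde{\mathbf Z}(t)$ explicitly, by multiplying on the right by the adjugate of $Z_n(t)$ and dividing by $\det Z_n(t)$. Set
\[
\operatorname{adj}Z_n(t)=\begin{pmatrix}1&-\varepsilon_n t/z_n\\ -z_n&1\end{pmatrix},
\qquad
\widetilde{\mathbf Z}(t):=\frac{\mathbf Z(t)\operatorname{adj}Z_n(t)}{1-\varepsilon_n t}.
\]
Since $Z_n(t)\operatorname{adj}Z_n(t)=\det Z_n(t)\,\id=(1-\varepsilon_n t)\id$, we get $\widetilde{\mathbf Z}(t)Z_n(t)=\mathbf Z(t)$ identically, wherever $\widetilde{\mathbf Z}$ is defined. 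It then remains to check two things: $\widetilde{\mathbf Z}$ has polynomial entries, and it lies in $\mathcal Z^\circ_{n-1}$.

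\emph{Polynomiality.} At $t=\varepsilon_n$ we have $\varepsilon_n t=1$, so
\[
\operatorname{adj}Z_n(\varepsilon_n)=\begin{pmatrix}1&-1/z_n\\ -z_n&1\end{pmatrix}.
\]
Both columns of this matrix are multiples of $(1,-z_n)^T$. By hypothesis this vector spans $\ker\mathbf Z(\varepsilon_n)$, so $\mathbf Z(\varepsilon_n)\operatorname{adj}Z_n(\varepsilon_n)=0$. Hence every entry of $\mathbf Z(t)\operatorname{adj}Z_n(t)$ vanishes at $t=\varepsilon_n$ and is divisible by $1-\varepsilon_n t$. Therefore $\widetilde{\mathbf Z}(t)$ is a polynomial matrix, and each entry has degree exactly one less than the corresponding entry of $\mathbf Z\operatorname{adj}Z_n$.

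\emph{Determinant and shape.} The determinant is
\[
\det\widetilde{\mathbf Z}=\frac{Q(t)\,(1-\varepsilon_n t)}{(1-\varepsilon_n t)^2}=(1-\varepsilon_1t)\cdots(1-\varepsilon_{n-1}t),
\]
which is the polynomial required in $\mathcal Z_{n-1}$. For the normalization at $t=0$, note that $\mathbf Z(0)$ is lower unitriangular and so is $\operatorname{adj}Z_n(0)$. Their product is therefore lower unitriangular, so the diagonal entries of $\widetilde{\mathbf Z}$ have constant term $1$ and the $(1,2)$ entry has zero constant term.

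\emph{Degree bounds.} Write $\mathbf Z=\begin{pmatrix}A&B\\C&D\end{pmatrix}$ as in Proposition~\ref{prop:Z-form}, with $m=\lfloor n/2\rfloor$. The entries of $\mathbf Z\operatorname{adj}Z_n$ and their degree bounds are:
\begin{itemize}
\item[] $(1,1)$: $A-z_nB$, of degree $\le\lfloor(n+1)/2\rfloor$;
\item[] $(1,2)$: $B-\varepsilon_ntA/z_n$, of degree $\le m+1$;
\item[] $(2,1)$: $C-z_nD$, of degree $\le m$;
\item[] $(2,2)$: $D-\varepsilon_ntC/z_n$, of degree $\le m$.
\end{itemize}
After dividing by $1-\varepsilon_n t$, these bounds become $\lfloor(n-1)/2\rfloor$, $\lfloor n/2\rfloor$, $\lfloor(n-2)/2\rfloor$ and $m-1\le\lfloor(n-1)/2\rfloor$ respectively. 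These are exactly the bounds defining $\mathcal Z_{n-1}$, so $\widetilde{\mathbf Z}\in\mathcal Z_{n-1}$.

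\emph{Nonvanishing at $\pm1$.} Suppose $\widetilde{\mathbf Z}(\pm1)=0$. Then $\mathbf Z(\pm1)=\widetilde{\mathbf Z}(\pm1)Z_n(\pm1)=0$, which contradicts $\mathbf Z\in\mathcal Z_n^\circ$. Hence $\widetilde{\mathbf Z}\in\mathcal Z_{n-1}^\circ$.

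The only step that needs care is the degree bookkeeping. The divisibility step is what makes the degree drop by one in each entry. Beyond that, one has to check that the slightly larger bound $m+1$ for the $(1,2)$ entry still fits after the drop, which it does.
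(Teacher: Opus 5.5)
Your proof is the paper's own argument: the same adjugate formula for $\widetilde{\mathbf Z}(t)$, the same observation that both columns of $\operatorname{adj}Z_n(\varepsilon_n)$ lie in $\ker\mathbf Z(\varepsilon_n)$, and the same determinant, unitriangularity and nonvanishing checks. There is one slip in your degree table: the $(2,2)$ entry $D-\varepsilon_n tC/z_n$ is bounded only by $\lfloor (n-1)/2\rfloor+1=\lfloor (n+1)/2\rfloor$, which is $m+1$ rather than $m$ when $n$ is odd, but after dividing by $1-\varepsilon_n t$ this still gives exactly the required bound $\lfloor (n-1)/2\rfloor$, so your conclusion is unaffected.
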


\begin{proof}
Define
\begin{equation}\label{eq:tildez}
\widetilde{\mathbf Z}(t):=\mathbf Z(t)Z_n(t)^{-1}
=
\frac{\mathbf Z(t)\operatorname{adj}Z_n(t)}{1-\varepsilon_n t},
\end{equation}
where
\begin{equation}\label{eq:adjoint}
\operatorname{adj}Z_n(t)=
\begin{pmatrix}
1 & -\varepsilon_n t / z_n\\[1ex]
-z_n & 1
\end{pmatrix}.    
\end{equation}
We show that \(\widetilde{\mathbf Z}(t)\in\mathcal Z_{n-1}^\circ\).

\smallskip
\noindent\emph{Polynomiality.}
We have
\[
\operatorname{im}\operatorname{adj}Z_n(\varepsilon_n)
=
\operatorname{span}
\begin{pmatrix}
1\\
-z_n
\end{pmatrix}
=
\ker\mathbf Z(\varepsilon_n).
\]
Hence
\[
\mathbf Z(\varepsilon_n)\operatorname{adj}Z_n(\varepsilon_n)=0.
\]
Since \(\varepsilon_n=\pm1\), this implies that the matrix polynomial
\(
\mathbf Z(t)\operatorname{adj}Z_n(t)
\)
is divisible by \(1-\varepsilon_nt\). Therefore
\(\widetilde{\mathbf Z}(t)\) is a polynomial matrix.

\smallskip
\noindent\emph{Degree bounds.} We use the formula \eqref{eq:tildez} for \(\widetilde{\mathbf Z}(t)\). 
The degree bounds for \(\mathbf Z(t)\) are
\[
\begin{pmatrix}
\lfloor n/2\rfloor & \lfloor (n+1)/2\rfloor\\[0.4ex]
\lfloor (n-1)/2\rfloor & \lfloor n/2\rfloor
\end{pmatrix}.
\]
Multiplication by \eqref{eq:adjoint}
gives degree bounds
\[
\begin{pmatrix}
\lfloor (n+1)/2\rfloor & \lfloor n/2\rfloor+1\\[0.4ex]
\lfloor n/2\rfloor & \lfloor (n+1)/2\rfloor
\end{pmatrix}.
\]
Dividing by the linear factor \(1-\varepsilon_n t\) decreases each bound
by one. Therefore \(\widetilde{\mathbf Z}(t)\) satisfies the degree
bounds
\[
\begin{pmatrix}
\lfloor (n-1)/2\rfloor & \lfloor n/2\rfloor\\[0.4ex]
\lfloor (n-2)/2\rfloor & \lfloor (n-1)/2\rfloor
\end{pmatrix},
\]
which are precisely the bounds defining \(\mathcal Z_{n-1}\).

\smallskip
\noindent\emph{Determinant.}
We have
\[
\det\widetilde{\mathbf Z}(t)
=
\frac{\det\mathbf Z(t)}{\det Z_n(t)}
=
(1-\varepsilon_i t) \cdots (1 - \eps_{n-1}t).
\]

\smallskip
\noindent\emph{Unitriangularity.}
Since \(\mathbf Z(0)\) and \(Z_n(0)\) are both lower unitriangular, so is
\[
\widetilde{\mathbf Z}(0)=\mathbf Z(0)Z_n(0)^{-1}.
\]
This completes the proof that
\(\widetilde{\mathbf Z}(t)\in\mathcal Z_{n-1}\).

\smallskip
\noindent\emph{Nonvanishing at \(t=\pm1\).}
Since
\[
\mathbf Z(t)=\widetilde{\mathbf Z}(t)Z_n(t),
\]
the condition \(\mathbf Z(\pm 1)\neq0\) implies
\(
\widetilde{\mathbf Z}(\pm 1)\neq0
\). 
Therefore
\(
\widetilde{\mathbf Z}(t)\in\mathcal Z_{n-1}^\circ.
\) \qedhere
\end{proof}
\begin{lemma}\label{lem:openness}
The set of matrices \(\mathbf Z(t)\in\mathcal Z_n^\circ\) which admit a
factorization
\[
\mathbf Z(t)=Z_1(t)\cdots Z_n(t),
\]
with \(Z_i(t)\) as in \eqref{eq:zmatrix}, for some
\(z_1,\dots,z_n\in\mathbb C^\times\), is a Zariski open subset of
\(\mathcal Z_n^\circ\).
\end{lemma}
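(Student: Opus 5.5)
We prove the lemma by induction on \(n\), peeling off the rightmost factor with Lemma~\ref{lem:right-factor}. The induction shows that the set in question contains an explicitly defined Zariski open subset \(U_n\subset\mathcal Z_n^\circ\), and in fact equals it.

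\textbf{Definition of \(U_n\).} For \(\mathbf Z(t)\in\mathcal Z_n^\circ\), look at \(\mathbf Z(\varepsilon_n)\). Since \(\det\mathbf Z(\varepsilon_n)=Q(\varepsilon_n)=0\) and \(\mathbf Z(\varepsilon_n)\neq0\), it has rank exactly one. So \(\ker\mathbf Z(\varepsilon_n)\) is a line depending algebraically on \(\mathbf Z\): it is spanned by either row of the adjugate, e.g. by \((B(\varepsilon_n),-A(\varepsilon_n))^T\) or \((D(\varepsilon_n),-C(\varepsilon_n))^T\). Call \(\mathbf Z\) \emph{admissible at step \(n\)} if this line is \(\operatorname{span}(1,-z_n)^T\) with \(z_n\in\C^\times\). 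This means both coordinates of a spanning vector are nonzero, which is a Zariski open condition. Then \(z_n\) is a regular function on the admissible locus, given locally by \(A(\varepsilon_n)/B(\varepsilon_n)\) or \(C(\varepsilon_n)/D(\varepsilon_n)\). By Lemma~\ref{lem:right-factor}, the quotient \(\widetilde{\mathbf Z}=\mathbf Z Z_n^{-1}\) lies in \(\mathcal Z_{n-1}^\circ\), with sign pattern \(\varepsilon_1,\dots,\varepsilon_{n-1}\). The map \(\mathbf Z\mapsto\widetilde{\mathbf Z}\) is a morphism from the admissible locus to \(\mathcal Z_{n-1}^\circ\). Define \(U_n\) as the admissible locus intersected with the preimage of \(U_{n-1}\). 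Preimages of open sets under morphisms are open, so \(U_n\) is Zariski open by induction.

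\textbf{\(U_n\) is the factorizable set.} If \(\mathbf Z\in U_n\), then \(\mathbf Z=\widetilde{\mathbf Z}Z_n\) with \(\widetilde{\mathbf Z}\in U_{n-1}\), so it factors by induction. Conversely, suppose \(\mathbf Z=Z_1\cdots Z_n\in\mathcal Z_n^\circ\). Then \(\ker\mathbf Z(\varepsilon_n)\supseteq\ker Z_n(\varepsilon_n)=\operatorname{span}(1,-z_n)^T\), and equality holds because the rank is one. Hence \(\mathbf Z\) is admissible and \(\widetilde{\mathbf Z}=Z_1\cdots Z_{n-1}\in\mathcal Z_{n-1}^\circ\) is again factorizable. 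So the factorizable set equals \(U_n\).

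\textbf{Base case.} For \(n=1\), \(\mathcal Z_1\) consists of matrices \(\begin{pmatrix}1&b_1t\\ c_0&1\end{pmatrix}\) with determinant \(1-\varepsilon_1 t\), so \(b_1c_0=\varepsilon_1\). These are exactly the \(Z_1(t)\) with \(z_1=c_0\neq0\), so \(U_1=\mathcal Z_1^\circ\). Alternatively one can start the induction at \(n=0\), where \(\mathcal Z_0=\{\id\}\).

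\textbf{Main obstacle.} The delicate point is showing that the locus where the kernel has the required form is open and that \(z_n\) varies regularly. This uses the rank-one property of \(\mathbf Z(\varepsilon_n)\), which holds by the definition of \(\mathcal Z_n^\circ\) together with \(\det=Q\). Regularity is handled by covering the admissible locus with the two charts where \((A,B)(\varepsilon_n)\neq0\) or \((C,D)(\varepsilon_n)\neq0\). Openness of the set is all the lemma asserts; non-emptiness is not required here.
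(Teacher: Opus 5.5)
Your proof is correct and follows the paper's argument essentially verbatim: your admissible locus is the paper's \(U_n\) (where the kernel map \(\mathbf Z\mapsto\ker\mathbf Z(\varepsilon_n)\in\CP^1\) avoids \(0,\infty\)), and your \(U_n\) is the paper's factorizable locus \(\mathcal F_n=\{\mathbf Z\in U_n\mid\widetilde{\mathbf Z}\in\mathcal F_{n-1}\}\), shown to be open by the same induction through Lemma~\ref{lem:right-factor}. Your adjugate charts simply make explicit the regularity of \(\mathbf Z\mapsto z_n\) that the paper asserts. One small slip: \((B,-A)^T\) and \((D,-C)^T\) are, up to sign, columns of the adjugate rather than rows, though the vectors themselves are correct.
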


\begin{proof}
We argue by induction on \(n\). The case \(n=1\) is immediate.

For \(\mathbf Z(t)\in\mathcal Z_n^\circ\), we have
\(
\det \mathbf Z(\varepsilon_n)=0
\) and \(
\mathbf Z(\varepsilon_n)\neq0.
\)
Thus \(\mathbf Z(\varepsilon_n)\) has rank one and hence a
one-dimensional kernel. This defines a regular map
\[
\kappa_n\colon \mathcal Z_n^\circ\longrightarrow \mathbb{CP}^1,
\qquad
\mathbf Z(t)\longmapsto \ker \mathbf Z(\varepsilon_n).
\]
Let \(U_n\subset\mathcal Z_n^\circ\) be the open subset where
\(
\kappa_n(\mathbf Z)\notin\{0,\infty\}.
\)
Equivalently, \(\mathbf Z\in U_n\) if and only if there exists
\(z_n\in\mathbb C^\times\) such that
\[
\ker\mathbf Z(\varepsilon_n)
=
\operatorname{span}
\begin{pmatrix}
1\\
-z_n
\end{pmatrix}.
\]
In particular, this condition is satisfied if
\[
\mathbf Z(t)=Z_1(t)\cdots Z_n(t).
\]
Indeed, in that case we have
\[
\ker\mathbf Z(\varepsilon_n)\supseteq
\ker Z_n(\varepsilon_n)
=
\operatorname{span}
\begin{pmatrix}
1\\
-z_n
\end{pmatrix}.
\]
Moreover, since \(\mathbf Z(\varepsilon_n)\neq0\), the kernel of
\(\mathbf Z(\varepsilon_n)\) is one-dimensional, so this inclusion is
necessarily an equality. Thus every factorizable element belongs to \(U_n\). It therefore remains
to show that the set of factorizable elements is open in \(U_n\).

By Lemma~\ref{lem:right-factor}, for every \(\mathbf Z\in U_n\) we can
write
\[
\mathbf Z(t)=\widetilde{\mathbf Z}(t)Z_n(t),
\qquad
\widetilde{\mathbf Z}(t)\in\mathcal Z_{n-1}^\circ.
\]
Moreover, the map
\[
U_n\longrightarrow\mathcal Z_{n-1}^\circ,
\qquad
\mathbf Z\longmapsto\widetilde{\mathbf Z},
\]
is regular, because \(Z_n\) depends regularly on $\mathbf Z$ and
\[
\widetilde{\mathbf Z}(t)=\mathbf Z(t)Z_n(t)^{-1}.
\]
Let \(\mathcal F_n\subset\mathcal Z_n^\circ\) denote the factorizable
locus. Then
\[
\mathcal F_n
=
\{\mathbf Z\in U_n\mid \widetilde{\mathbf Z}\in\mathcal F_{n-1}\}.
\]
By the induction hypothesis, \(\mathcal F_{n-1}\) is open in
\(\mathcal Z_{n-1}^\circ\). Therefore \(\mathcal F_n\) is open in
\(U_n\), and hence open in \(\mathcal Z_n^\circ\).
\end{proof}
\begin{lemma}\label{lem:nonemptiness}
The factorizable locus in \(\mathcal Z_n^\circ\) is nonempty.
\end{lemma}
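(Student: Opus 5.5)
The plan is to produce an explicit point $(z_1,\dots,z_n)\in(\C^\times)^n$ for which $\mathbf Z(t)=Z_1(t)\cdots Z_n(t)$ lies in $\mathcal Z_n^\circ$.

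First, note that for \emph{every} choice of $z_i\in\C^\times$ the product already lies in $\mathcal Z_n$:
\begin{itemize}
\item the degree bounds hold by Proposition~\ref{prop:Z-form};
\item the determinant is $\prod_i(1-\varepsilon_i t)=Q(t)$;
\item $\mathbf Z(0)$ is lower unitriangular, so its diagonal entries have constant term $1$ and the $(1,2)$ entry has zero constant term.
\end{itemize}
So the only thing to arrange is $\mathbf Z(1)\neq0$ and $\mathbf Z(-1)\neq0$. Each of these is a Zariski open condition on $(z_1,\dots,z_n)\in(\C^\times)^n$, since the entries of $\mathbf Z(\pm1)$ are Laurent polynomials in the $z_i$. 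The torus is irreducible, so it suffices to show that each of the two open sets is nonempty separately. Their intersection is then nonempty, and every point of it gives a factorizable element of $\mathcal Z_n^\circ$.

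Fix $s\in\{\pm1\}$. I will show by induction on $k$ that one can choose $z_1,\dots,z_k$ so that $M_k:=Z_1(s)\cdots Z_k(s)\neq0$, with $M_0=\id$ as the base case. Suppose $M_k\ne0$ and consider the factor $Z_{k+1}(s)$.
\begin{itemize}
\item If $\varepsilon_{k+1}s=-1$, then $\det Z_{k+1}(s)=2\neq0$, so $Z_{k+1}(s)$ is invertible. Hence $M_{k+1}\neq0$ for any $z_{k+1}$.
\item If $\varepsilon_{k+1}s=1$, then
\[
Z_{k+1}(s)=\begin{pmatrix}1&1/z_{k+1}\\ z_{k+1}&1\end{pmatrix}=\begin{pmatrix}1\\ z_{k+1}\end{pmatrix}\begin{pmatrix}1& 1/z_{k+1}\end{pmatrix}.
\]
Therefore $M_{k+1}=0$ exactly when $(1,z_{k+1})^T\in\ker M_k$. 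Since $M_k\neq0$, its kernel is at most a single line. That line equals $\operatorname{span}(1,z_{k+1})^T$ for at most one value of $z_{k+1}$, so we choose $z_{k+1}\in\C^\times$ avoiding that value.
\end{itemize}
This gives $\mathbf Z(s)=M_n\neq0$ for suitable $z$, which proves that each open set is nonempty.

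The only step needing care is the rank-one case. It works because $M_k$ depends only on the previously chosen $z_1,\dots,z_k$, and the image line of $Z_{k+1}(s)$ sweeps out all lines except the two coordinate lines as $z_{k+1}$ ranges over $\C^\times$. The rest is bookkeeping.
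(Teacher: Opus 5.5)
Your proof is correct and is essentially the paper's argument: the paper also builds the product one factor at a time on the right. It uses invertibility of \(Z_k(t)\) at \(t=-\varepsilon_k\), and at \(t=\varepsilon_k\) it uses that the nonzero partial product has an at most one-dimensional kernel, which rules out at most one value of \(z_k\). The only difference is that the paper treats \(t=1\) and \(t=-1\) in a single induction, since at each step only \(t=\varepsilon_k\) is at risk; your two separate inductions glued by irreducibility of the torus are valid but unnecessary.
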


\begin{proof}
We argue by induction on \(n\). The case \(n=1\) is immediate. Assume
there exists a factorizable matrix
\(
\widetilde{\mathbf Z}(t)\in\mathcal Z_{n-1}^\circ.
\)
Choose \(z_n\in\mathbb C^\times\) and define
\[
\mathbf Z(t):=\widetilde{\mathbf Z}(t)Z_n(t),
\]
where \(Z_n(t)\) is given by \eqref{eq:zmatrix}. Then
\(\mathbf Z(t)\) is factorizable by construction and belongs to
\(\mathcal Z_n\). It remains to show that \(z_n\) can be chosen so that
\(
\mathbf Z( t)\neq0
\) for \(t=\pm1\).

Fix \(t=\pm1\). If \(t\neq\varepsilon_n\), then \(Z_n(t)\) is
invertible, and therefore \(\mathbf Z(t)\neq0\) because
\(\widetilde{\mathbf Z}(t)\neq0\).
If \(t=\varepsilon_n\) the condition
\(
\mathbf Z(\varepsilon_n)=0
\)
is equivalent to
\[
\ker\widetilde{\mathbf Z}(\varepsilon_n)\supset
\operatorname{im}Z_n(\varepsilon_n)
=
\operatorname{span}
\begin{pmatrix}
1\\
z_n
\end{pmatrix}.
\]
Since \(\widetilde{\mathbf Z}(\varepsilon_n)\neq0\), its kernel is
one-dimensional. Hence this condition excludes at most one value of
\(z_n\). We may therefore choose
\(z_n\in\mathbb C^\times\) avoiding this value. It follows that
\(
\mathbf Z(\varepsilon_n)\neq0,
\)
and therefore
\(
\mathbf Z(\pm1)\neq0.
\)
Thus
\(
\mathbf Z(t)\in\mathcal Z_n^\circ.
\)
\end{proof}

We are now ready to prove Proposition~\ref{prop:I-dominant}.

\begin{proof}[Proof of Proposition~\ref{prop:I-dominant}]
The map
\[
(\C^\times)^n
\longrightarrow
\C^{\lfloor n/2\rfloor},
\qquad
(z_1,\dots,z_n)
\longmapsto
(I_1,\dots,I_{\lfloor n/2\rfloor})
\]
is the composition of the following two maps:
\begin{align}
\mathrm{product}\colon\quad
(\C^\times)^n
&\longrightarrow
\mathcal Z_n,
&
(z_1,\dots,z_n)
&\longmapsto
Z_1(t)\cdots Z_n(t), \label{eq:product-map1}\\
\mathrm{trace}\colon\quad
\mathcal Z_n
&\longrightarrow
\C^{\lfloor n/2\rfloor},
&
\mathbf Z(t)
&\longmapsto
(I_1,\dots,I_{\lfloor n/2\rfloor}). \label{eq:trace-map1}
\end{align}

By Lemma~\ref{lem:trace-surjectivity}, the trace
map~\eqref{eq:trace-map1} is surjective, and hence dominant. By
Lemmas~\ref{lem:openness} and~\ref{lem:nonemptiness}, the image of the
product map~\eqref{eq:product-map1} contains a nonempty Zariski open
subset of \(\mathcal Z_n^\circ\), and hence of \(\mathcal Z_n\).
Therefore the product map is dominant. Since the composition of dominant
morphisms is dominant, the map
\[
(\C^\times)^n
\longrightarrow
\C^{\lfloor n/2\rfloor},
\qquad
(z_1,\dots,z_n)
\longmapsto
(I_1,\dots,I_{\lfloor n/2\rfloor})
\]
is dominant.
\end{proof}

\subsection{Poisson brackets}

In this section, we introduce a Poisson bracket on
\(
\mathcal P_n/\Aff(\mathbb C)
\)
that is preserved by the solution map \(\mathcal S\), and prove that the
first integrals \(I_j\) pairwise Poisson commute. 

Since the solution map admits a zero-curvature representation, it can be
realized as a refactorization-type transformation. Poisson--Lie theory
therefore provides a natural framework for constructing the associated
Poisson structure. In particular, the bracket introduced below could be
derived from the standard trigonometric \(r\)-matrix. Instead, we first
define it \emph{ad hoc} and verify that it is preserved by every
folding, and hence by the solution map. Only afterwards do we identify
it with the Poisson structure arising from the trigonometric
\(r\)-matrix and use this identification to prove that the first
integrals~\(
I_j
\)
Poisson commute.


Recall that the quotient
\(
\mathcal P_n/\Aff(\mathbb C)
\)
is identified with the hypersurface
\begin{equation}\label{eq:prod-y}
y_1\cdots y_n=1.
\end{equation}
in $(\C^\times)^n$. The identification sends the equivalence class of a polygon
\((p_i)\in\mathcal P_n\) modulo affine transformations to the point
with coordinates
\[
y_i:=\frac{z_i}{z_{i-1}},
\qquad
z_i:=p_{i+1}-p_i.
\]
We equip the torus $(\C^\times)^n$ with the standard cyclic log-canonical Poisson bracket
\begin{equation}\label{eq:logcanonical}
\{y_i,y_j\}=
\begin{cases}
y_i y_j, & j=i+1 \!\!\!\pmod n,\\
-y_i y_j, & i=j+1 \!\!\!\pmod n,\\
0, & \text{otherwise}.
\end{cases}
\end{equation}
The function
\(
y_1\cdots y_n
\)
is a Casimir of this bracket. Therefore, each of its level sets, and in
particular the hypersurface
\eqref{eq:prod-y}
is a Poisson subvariety of \((\C^\times)^n\). Under the identification
\[
\mathcal P_n/\Aff(\mathbb C)
\cong
\{\,y_1\cdots y_n=1\,\},
\]
the Poisson bracket therefore descends to
\(\mathcal P_n/\Aff(\mathbb C)\).

\begin{proposition}\label{prop:invPB}
The solution map \(\mathcal S\) preserves the Poisson bracket
\eqref{eq:logcanonical}.
\end{proposition}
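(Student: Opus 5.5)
By Proposition~\ref{prop:shiftfold}, the solution map \(\mathcal S\) is a composition of the foldings \(F_1,\dots,F_n\), each appearing once. It therefore suffices to show that every single folding \(F_j\) is a Poisson map for the bracket \eqref{eq:logcanonical}. We will verify this by writing \(F_j\) explicitly in the coordinates \(y_i\) and checking the brackets directly.

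\emph{Step 1: \(F_j\) in edge coordinates.} The folding moves only the vertices \(p_k\) with \(k\equiv j \pmod n\), so it changes only \(z_{j-1}\) and \(z_j\). By affine invariance we may normalize \(p_{j-1}=0\) and \(p_{j+1}=1\). The condition \([0,b,1,d]=-1\) then gives \(d=b/(2b-1)\). Undoing the normalization, we expect
\[
z_{j-1}'=\frac{z_{j-1}(z_{j-1}+z_j)}{z_{j-1}-z_j},\qquad
z_j'=-\frac{z_j(z_{j-1}+z_j)}{z_{j-1}-z_j}.
\]

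\emph{Step 2: \(F_j\) in the \(y\)-coordinates.} From Step~1 we get
\[
y_j'=-y_j,\qquad
y_{j-1}'=y_{j-1}\,\frac{1+y_j}{1-y_j},\qquad
y_{j+1}'=-y_{j+1}\,\frac{1-y_j}{1+y_j},
\]
with all other \(y_i\) unchanged. As a sanity check, the product \(y_{j-1}y_jy_{j+1}\), and hence the hypersurface \eqref{eq:prod-y}, is preserved.

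\emph{Step 3: checking the brackets.} Work with logarithms, so that \eqref{eq:logcanonical} reads \(\{\log y_i,\log y_{i+1}\}=1\), with all non-adjacent brackets equal to zero. Set \(g:=\log\frac{1+y_j}{1-y_j}\); this is a function of \(y_j\) alone, and the sign changes only add constants to the logarithms.
\begin{itemize}
\item For \(\{\log y_{j-1}',\log y_j'\}\) and \(\{\log y_j',\log y_{j+1}'\}\): the extra term \(\pm g\) Poisson commutes with \(\log y_j\), so these brackets equal their original values, namely \(1\).
\item For \(\{\log y_{j-1}',\log y_{j+1}'\}=\{\log y_{j-1}+g,\ \log y_{j+1}-g\}\): the original bracket \(\{\log y_{j-1},\log y_{j+1}\}\) vanishes. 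The remaining cross terms are \(-\{\log y_{j-1},g\}+\{g,\log y_{j+1}\}\). Each of these equals \(y_jg'(y_j)\), so they cancel.
\item For brackets of \(y_{j-1}'\) with \(y_{j-2}\), and of \(y_{j+1}'\) with \(y_{j+2}\): these are unchanged, because \(g\) commutes with \(y_{j\pm2}\) when \(n\ge4\).
\end{itemize}

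\emph{Main obstacle.} The computation in Step~3 is routine. The genuine risks are two.
\begin{itemize}
\item The first is getting the explicit formulas of Steps~1--2 exactly right, including signs and the index conventions \(y_i=z_i/z_{i-1}\), since the cancellation in Step~3 depends on them.
\item The second is the small cases \(n=2,3\). There the indices \(j-1\) and \(j+1\) coincide or are adjacent cyclically, so additional terms appear, and these must be checked separately by hand (for \(n=3\), \(y_{j-1}\) and \(y_{j+1}\) are themselves adjacent). In these cases we would redo Step~3 directly with the cyclic bracket.
\end{itemize}

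Once all foldings are shown to be Poisson, their composition \(\mathcal S\) is Poisson as well.
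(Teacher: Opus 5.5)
Your proof is correct and follows the paper's route. You reduce $\mathcal S$ to the individual foldings $F_j$ via the Coxeter factorization, and your formulas for $F_j$ in the $y$-coordinates agree with the paper's \eqref{eq:yfolding}. You then verify the log-canonical brackets directly; your logarithmic bookkeeping, including the cancellation of the two $y_j g'(y_j)$ cross terms, simply spells out the ``direct computation'' that the paper leaves to the reader.
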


\begin{proof}
Since the solution map \(\mathcal S\) is a composition of folding maps
\(F_j\), it suffices to show that each \(F_j\) preserves the bracket
\eqref{eq:logcanonical}. In the coordinates \(y_i=z_i/z_{i-1}\), the folding \(F_j\) is given by
\begin{equation}\label{eq:yfolding}
y_{j-1}'=-y_{j-1}\frac{y_j+1}{y_j-1},
\qquad
y_j'=-y_j,
\qquad
y_{j+1}'=y_{j+1}\frac{y_j-1}{y_j+1},
\end{equation}
while all other coordinates remain unchanged. A direct computation shows
that the transformed coordinates satisfy the same bracket relations
\eqref{eq:logcanonical}. Hence each \(F_j\) is Poisson, and therefore so
is \(\mathcal S\).
\end{proof}

\begin{remark}
The transformation \eqref{eq:yfolding} is closely related to \(Y\)-mutations in cluster algebra theory and becomes the standard \(Y\)-mutation after an appropriate change of variables. The preservation of the log-canonical bracket \eqref{eq:logcanonical} may therefore be viewed as a manifestation of the well-known compatibility between \(Y\)-mutations and log-canonical Poisson structures.
\end{remark}

\begin{remark}
The Poisson bracket \eqref{eq:logcanonical} is local: the Poisson bracket
of two sufficiently distant coordinates \(y_i\) vanishes. This contrasts with the geometric structure previously associated with
discrete conformal maps, which arises from their Lagrangian formulation
and the three-leg form of the cross-ratio equation
\cite{bobenko2002integrable,adler2003classification}. That structure is,
in general, presymplectic, and even when it can be inverted, the
resulting Poisson bracket is non-local. Moreover, it is not known whether
the first integrals arising from the zero-curvature representation
Poisson commute with respect to that bracket. The local Poisson bracket introduced here is therefore genuinely
different from the one coming from the Lagrangian theory and appears to
be better suited for the study of integrability.
\end{remark}





We now show that the first integrals
\(
I_1,\dots,I_{\lfloor n/2\rfloor}
\)
Poisson commute with respect to the bracket
\eqref{eq:logcanonical}. Recall that these integrals are the
coefficients of the polynomial
\[
\tr\bigl(Y_1(t)\cdots Y_n(t)\bigr),
\]
where the matrices \(Y_i(t)\) are given by~\eqref{eq:ymatrix}.
\begin{proposition}\label{prop:integrals-commute}
The functions \(I_j\) Poisson commute with respect to the bracket \eqref{eq:logcanonical}.
\end{proposition}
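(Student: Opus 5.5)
The plan is to realize \(\mathbf Y(t)=Y_1(t)\cdots Y_n(t)\) as the monodromy of a Lax chain whose local factors satisfy a quadratic Sklyanin-type bracket with a trigonometric (or rational) \(r\)-matrix. Then \(\{\tr\mathbf Y(t),\tr\mathbf Y(s)\}=0\) follows from the standard argument. The text preceding the statement already announces this route: the bracket is to be identified with the one coming from the trigonometric \(r\)-matrix.

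\textbf{Step 1: local \(r\)-matrix relation.} First I would find a constant matrix \(r(t,s)\) (a function of \(t/s\) only) such that the single-site matrices satisfy
\begin{equation}
\{Y_i(t)\overset{\otimes}{,}Y_i(s)\}=[\,r(t,s),\,Y_i(t)\otimes Y_i(s)\,].
\end{equation}
Since \(Y_i\) depends only on \(y_i\), and \(\{y_i,y_i\}=0\), the left-hand side vanishes. A pure commutator relation is therefore impossible, and the relation must be twisted. The cyclic log-canonical bracket couples only neighbours \(y_i,y_{i+1}\). This suggests using a "shifted" Sklyanin relation of the form
\begin{equation}
\{Y_i(t)\overset{\otimes}{,}Y_j(s)\}=\delta_{j,i+1}\,(Y_i(t)\otimes 1)\,\rho\,(1\otimes Y_j(s))-\delta_{i,j+1}\,(1\otimes Y_i(s))\,\rho'\,(Y_j(t)\otimes 1),
\end{equation}
where \(\rho\) is a diagonal-type constant matrix, for instance \(\rho=\pm E_{22}\otimes E_{22}\) up to normalisation. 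The choice of \(\rho\) should match the coefficient structure of \(Y_i\), in which \(y_i\) multiplies only the second column. Concretely, \(Y_i=\begin{pmatrix}1&0\\1&0\end{pmatrix}+y_i\begin{pmatrix}0&\varepsilon_{i-1}t\\0&1\end{pmatrix}\), so \(Y_i(t)=M_i(t)\,\mathrm{diag}(1,y_i)\) with \(M_i\) constant. The bracket \(\{y_i,y_{i+1}\}=y_iy_{i+1}\) then becomes
\begin{equation}
\{Y_i\overset{\otimes}{,}Y_{i+1}\}=(Y_i\otimes Y_{i+1})(E_{22}\otimes E_{22}).
\end{equation}
Here the \(E_{22}\) in the first factor has to be moved to the left of \(Y_{i+1}\)'s constant part.

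\textbf{Step 2: reorganize as an \(r\)-matrix bracket for the product.} Using the factorization \(Y_i=M_i D_i\) with \(D_i=\mathrm{diag}(1,y_i)\), I would regroup the product \(\mathbf Y=M_1D_1M_2D_2\cdots M_nD_n\) as \(\prod_i (D_{i-1}M_i)\) cyclically, up to conjugation. The \(D\)'s form a log-canonical "torus" part and the \(M_i(t)\) are constant. This is the standard network / Fock–Goncharov setting. There one writes the bracket for \(\mathbf Y\) as
\begin{equation}
\{\mathbf Y(t)\overset{\otimes}{,}\mathbf Y(s)\}=[r(t/s),\mathbf Y(t)\otimes\mathbf Y(s)]+(\text{terms of the form }(\mathbf Y\otimes1)\,a\,(1\otimes\mathbf Y)-(1\otimes\mathbf Y)\,a\,(\mathbf Y\otimes1)),
\end{equation}
with \(a\) constant. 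Both kinds of terms have vanishing trace after applying \(\tr\otimes\tr\): the first is a commutator, and the second by cyclicity. The monodromy must be taken as a closed cycle, which is exactly why the bracket is cyclic mod \(n\).

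\textbf{Step 3: verify and conclude.} An alternative to Step 2 is to check directly that each \(D_{i-1}M_i(t)\) satisfies a quadratic \(r\)-matrix bracket with the trigonometric \(r\)-matrix of \(\mathfrak{gl}_2\) twisted by a diagonal Cartan term. Multiplicativity of Sklyanin brackets then extends this to the cyclic product, and taking \(\tr\otimes\tr\) kills the right-hand side. This gives \(\{I_j,I_k\}=0\) for all \(j,k\).

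\textbf{Main obstacle.} The main obstacle is Step 1/2: finding the correct \(r\)-matrix (including the Cartan twist) so that the constant factors \(M_i(t)\) are compatible with it. In particular the sign \(\varepsilon_{i-1}\) enters \(M_i\) and must not spoil the relation. I would first try to solve \([r(t,s),M(t)\otimes M(s)]=\) (correction from the \(D\)'s) for a \(2\times2\) ansatz \(r=\frac{t+s}{t-s}(\text{Casimir})+\) diagonal part. If that fails, I would fall back on a direct generating-function computation: expand \(\tr\mathbf Y(t)\) as a sum over admissible "dimer" subsets of sites, as in Proposition~\ref{prop:Z-form}'s max-plus pattern, and show by a cancellation argument that \(\{\tr\mathbf Y(t),\tr\mathbf Y(s)\}=0\), pairing terms across neighbouring sites.
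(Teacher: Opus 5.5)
\textbf{There is a genuine gap: the step that would actually prove the proposition is asserted, not carried out.}

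Your Step~1 relation $\{Y_i(t)\overset{\otimes}{,}Y_{i+1}(s)\}=(Y_i(t)\otimes Y_{i+1}(s))(E_{22}\otimes E_{22})$ is correct. However, it is only a restatement of the log-canonical bracket, and everything after it is asserted rather than derived. In Step~2 you postulate that $\{\mathbf Y(t)\overset{\otimes}{,}\mathbf Y(s)\}$ has the form $[r,\mathbf Y\otimes\mathbf Y]+(\mathbf Y\otimes 1)a(1\otimes\mathbf Y)-(1\otimes\mathbf Y)a(\mathbf Y\otimes 1)$, but you exhibit neither $r$ nor $a$, and you yourself flag finding them as the main obstacle. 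That obstacle is the whole content of the proposition. Since $y_i\,\partial_{y_i}Y_i=Y_iE_{22}$, your local relation gives directly
\[
\{\tr\mathbf Y(t),\tr\mathbf Y(s)\}=\sum_{i\in\Z/n\Z}\bigl(f_i(t)f_{i+1}(s)-f_{i+1}(t)f_i(s)\bigr),\qquad f_i:=\bigl(Y_{i+1}\cdots Y_nY_1\cdots Y_i\bigr)_{22}.
\]
Nothing in the proposal shows that this sum vanishes.

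The neighbour term has $E_{22}\otimes E_{22}$ outside $Y_i\otimes Y_{i+1}$, not sandwiched as $(Y_i\otimes 1)\,\sigma\,(1\otimes Y_{i+1})$ with constant $\sigma$. So there is no evident way to push it through partial products to a closed relation for $\mathbf Y$, and $\mathbf Y$, being a gauge-fixed object, need not satisfy one.

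Step~3 fails as stated. Each $D_{i-1}M_i$ depends on the single variable $y_{i-1}$, so its self-bracket is zero; this is the obstruction you noted in Step~1. Neighbouring factors do not Poisson commute, since $\{y_{i-1},y_i\}\neq0$. Multiplicativity of a Sklyanin-type bracket needs the opposite situation, namely the product Poisson structure on the factors. Regrouping the $D$'s and $M$'s cannot remove this non-ultralocality.

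\textbf{The missing idea: enlarge the phase space until the structure is ultralocal, then reduce.} The paper does this as follows.
\begin{enumerate}
\item Lift each factor to $A_i=\begin{pmatrix}a_i&b_i\lambda^{-1}\\ c_i&d_i\end{pmatrix}$, with four independent coordinates carrying the restricted trigonometric Sklyanin bracket: $\{b,a\}=\tfrac12ab$, $\{a,c\}=\tfrac12ac$, $\{c,d\}=\tfrac12cd$, $\{d,b\}=\tfrac12bd$, $\{a,d\}=\{b,c\}=0$.
\item Restrict to the Poisson subvariety $b_ic_i/(a_id_i)=\eps_{i-1}$, which is cut out by Casimirs.
\item Give $\mathcal X^n_\eps$ the product structure. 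Multiplication is then Poisson, so the trace coefficients of $A_1\cdots A_n$ commute by the standard $r$-matrix argument.
\item Quotient by the diagonal gauge action $A_i\mapsto D_i^{-1}A_iD_{i+1}$, which is Poisson and preserves traces.
\end{enumerate}
The invariants $y_i=c_{i+1}d_i/(c_ia_{i+1})$ straddle two consecutive factors, and this is exactly where the neighbour coupling comes from:
\[
\{\log y_i,\log y_{i+1}\}=\{\log c_{i+1}-\log a_{i+1},\,\log d_{i+1}-\log c_{i+1}\}=\tfrac12+\tfrac12=1.
\]
Together with the Casimir $\rho=a_1\cdots a_n$, the $y_i$ coordinatize the quotient. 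A gauge section returns your matrices $Y_i$ with $t=\lambda^{-1}$ on the level set $\rho=1$, and one finally restricts to $y_1\cdots y_n=1$.

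Without such a lift, an explicitly verified non-ultralocal $r$-matrix structure, or an actual execution of the cancellation you mention as a fallback, the proposal does not prove the statement.
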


\begin{proof}
We prove a slightly more general statement. Namely, we show that the coefficients of
\(
\tr\left(Y_1'\cdots Y_n'\right)
\)
where
\begin{equation}\label{eq:Yprime}
Y_i'=
\begin{pmatrix}
1&\eps_{i-1} y_i \lambda^{-1}\\
1&y_i
\end{pmatrix}, 
\end{equation}
Poisson commute with respect to the bracket \eqref{eq:logcanonical}, without imposing the condition
\eqref{eq:prod-y}.
The proposition then follows by restricting to the Poisson subvariety \eqref{eq:prod-y} and setting \(t=\lambda^{-1}\).

The proof is based on an \(r\)-matrix realization of the above Poisson bracket, constructed as follows. Consider the associative algebra \(\Mat_2(\C[\lambda,\lambda^{-1}])\) of Laurent polynomials with coefficients in \(\Mat_2(\C)\). It carries a multiplicative Sklyanin--Adler--Gelfand--Dickey type Poisson bracket defined by the trigonometric \(r\)-matrix. Details on this bracket can be found, for instance, in \cite{izosimov2022pentagram}, where it is described in the language of difference operators. The isomorphism between \(k\)-periodic difference operators and \(\Mat_k(\C[\lambda,\lambda^{-1}])\) is explained in Remark~3.1 of loc.~cit.; here we use this correspondence for \(k=2\).

In particular, this Poisson bracket restricts to the subvariety
\[
\mathcal X
=
\left\{
\begin{pmatrix}
a&b\lambda^{-1}\\
c&d
\end{pmatrix}
\;\middle|\;
a,b,c,d\in\C^\times
\right\}
\subset \Mat_2(\C[\lambda,\lambda^{-1}]),
\]
which corresponds to degree-one difference operators. In coordinates, the restricted bracket is given by
\begin{align*}
\{b,a\}&=\frac12ab,
&
\{a,c\}&=\frac12ac,
&
\{c,d\}&=\frac12cd,
&
\{d,b\}&=\frac12bd,
\\
&&\{a,d\}&=0,
&
\{b,c\}&=0.
\end{align*}

Consider the product \(\mathcal X^n\), equipped with the product Poisson structure. The coordinates \(a,b,c,d\) on \(\mathcal X\) induce coordinates \(a_i,b_i,c_i,d_i\) on the \(i\)-th factor. Since \(a_id_i\) and \(b_ic_i\) are Casimir functions on each factor, the equations
\[
\frac{b_i c_i}{a_i d_i}=\eps_{i-1},
\qquad i=1,\dots,n, 
\]
define a Poisson subvariety
\[
\mathcal X^n_\eps
=
\left\{
(A_1,\dots,A_n)\in \mathcal X^n
\;\middle|\;
\frac{b_i c_i}{a_i d_i}=\eps_{i-1}
\text{ for all } i
\right\}. 
\]
The multiplication map
\[
\mathcal X^n_\eps
\longrightarrow
\Mat_2(\C[\lambda,\lambda^{-1}]),
\qquad
(A_1,\dots,A_n)\longmapsto A_1\cdots A_n,
\]
is Poisson, by multiplicativity of the bracket on
\(\Mat_2(\C[\lambda,\lambda^{-1}])\). Furthermore, since the bracket on \(\Mat_2(\C[\lambda,\lambda^{-1}])\) is given by an \(r\)-matrix, spectral invariants are in involution. In particular, the coefficients of \(\tr M(\lambda)\), viewed as functions on \(\Mat_2(\C[\lambda,\lambda^{-1}])\), Poisson commute. Therefore, their pullbacks to \(\mathcal X^n_\eps\) under the multiplication map also Poisson commute. These pullbacks are precisely the coefficients of
\(
\tr(A_1\cdots A_n).
\)

Next consider the gauge action on \(\mathcal X^n_\eps\) of the group \((\C^\times)^{2n}\) of constant invertible diagonal matrices
\(
D_i=\operatorname{diag}(p_i,q_i),
\)
given by
\[
(A_1,\dots,A_n)
\longmapsto
(D_1^{-1}A_1D_2,\,
D_2^{-1}A_2D_3,\,
\dots,\,
D_n^{-1}A_nD_1).
\]
This action is Poisson, where \((\C^\times)^{2n}\) is equipped with the zero Poisson structure. Indeed, this is a standard property of the trigonometric \(r\)-matrix bracket; it can also be verified directly from the induced action on the coordinates \(a_i,b_i,c_i,d_i\). Therefore, the quotient \(\mathcal X^n_\eps/(\C^\times)^{2n}\) inherits a Poisson structure.

The trace coefficients descend to the quotient. Indeed, under the gauge action,
 the product $A_1\cdots A_n$ is transformed by conjugation. Hence each coefficient of
\(
\tr(A_1\cdots A_n)
\)
is invariant under the gauge action. Since these trace coefficients are in involution on \(\mathcal X^n_\eps\), their descents to the quotient \(\mathcal X^n_\eps/(\C^\times)^{2n}\) are likewise in involution.

The quotient \(\mathcal X^n_\eps/(\C^\times)^{2n}\) is isomorphic to
\((\C^\times)^{n+1}\). An explicit isomorphism is given by the gauge-invariant functions
\[
y_i:=\frac{c_{i+1}d_i}{c_i a_{i+1}},
\qquad
\rho:=a_1a_2\cdots a_n,
\]
where the indices are understood cyclically. A direct computation shows that the induced Poisson bracket in these coordinates is given by \eqref{eq:logcanonical}, together with
\begin{equation}\label{eq:rhocas}
    \{\rho,y_i\}=0,
\end{equation}
so that \(\rho\) is a Casimir.

We have therefore shown that the descended trace coefficients Poisson commute with respect to this Poisson structure. It remains to identify these descended functions explicitly. To that end, consider the map
\[
s:\mathcal X^n_\eps/(\C^\times)^{2n}\longrightarrow \mathcal X^n_\eps,
\qquad
(y_1,\dots,y_n,\rho)\longmapsto (Y_1',\dots,Y_n'),
\]
where \(Y_i'\) is given by \eqref{eq:Yprime} for \(i=1,\dots,n-1\), and
\[
Y_n'=
\begin{pmatrix}
\rho&\rho\eps_{n-1} y_n\lambda^{-1}\\
1&y_n
\end{pmatrix}.
\]
This map is a section of the quotient map
\[
\pi:\mathcal X^n_\eps\to \mathcal X^n_\eps/(\C^\times)^{2n}.
\]
We may therefore identify the quotient with the image of this section. Under this identification, the descended trace coefficients become precisely the coefficients of
\(
\tr(Y_1'\cdots Y_n').
\)
Hence these coefficients Poisson commute with respect to the bracket given by \eqref{eq:logcanonical} and \eqref{eq:rhocas}.
Since \(\rho\) is a Casimir, restricting to the level set \(\rho=1\) preserves Poisson commutativity. On this level, the matrix \(Y_n'\) is also given by \eqref{eq:Yprime}. Therefore the coefficients of
\(
\tr(Y_1'\cdots Y_n'),
\)
where all \(Y_i'\) are given by \eqref{eq:Yprime}, Poisson commute.
\end{proof}

\subsection{Complete integrability of the Cauchy problem (Proof of Theorem~\ref{thm1})}

\begin{proof}[Proof of Theorem~\ref{thm1}]
By Proposition~\ref{prop:invPB}, the birational map
\[
\mathcal S\colon
\mathcal P_n/\Aff(\C)
\dashrightarrow
\mathcal P_n/\Aff(\C)
\]
preserves the Poisson structure \eqref{eq:logcanonical}. By
Proposition~\ref{prop:integrals-commute}, the functions
\(
I_1,\dots,I_{\lfloor n/2\rfloor}
\)
pairwise Poisson commute. They are algebraically independent and are
preserved by \(\mathcal S\). It remains to show that they form a
complete set of commuting first integrals. Equivalently, it remains to
verify that their number is
\[
\frac12\operatorname{rank}(\{\cdot,\cdot\})
+
\operatorname{corank}(\{\cdot,\cdot\}).
\]
If \(n\) is odd, the Poisson structure is symplectic. Hence
\[
\operatorname{rank}(\{\cdot,\cdot\})=n-1,
\qquad
\operatorname{corank}(\{\cdot,\cdot\})=0.
\]
If \(n\) is even, the Poisson structure has the Casimir
\(
y_1y_3\cdots y_{n-1},
\)
and therefore
\[
\operatorname{rank}(\{\cdot,\cdot\})=n-2,
\qquad
\operatorname{corank}(\{\cdot,\cdot\})=1.
\]
In both cases,
\[
\frac12\operatorname{rank}(\{\cdot,\cdot\})
+
\operatorname{corank}(\{\cdot,\cdot\})
=
\lfloor n/2\rfloor,
\]
which is precisely the number of first integrals. Thus
\(
I_1,\dots,I_{\lfloor n/2\rfloor}
\)
form a complete set of commuting first integrals.
\end{proof}

\section{Orthogonal square grid circle patterns}

\subsection{Definition}

We begin with the definition of an orthogonal square grid circle pattern. The definition is chosen so that every orthogonal square grid circle pattern is automatically a discrete conformal map.

 Recall that a quadrilateral \(ABCD\) is called a \emph{right kite} if 
\[
AB=AD,\qquad BC=CD, \qquad
\angle B=\angle D=\frac{\pi}{2},
\]
see Figure \ref{fig:kite}. Informally, an orthogonal square grid circle pattern is a map
\(
f:\mathbb Z^2\to\mathbb C
\)
such that the image of every elementary lattice quadrilateral is a right kite. Every right kite is a harmonic quadrilateral. Hence every orthogonal square grid circle pattern is a discrete conformal map.

\begin{figure}[!htb]
    \centering
    \begin{tikzpicture}[
  scale=0.75,
  line cap=round,
  line join=round
]

\pgfmathsetmacro{\r}{2}
\pgfmathsetmacro{\h}{sqrt(3)*\r/2}

\coordinate (A) at (0,0);
\coordinate (C) at ({2*\r},0);
\coordinate (B) at ({\r/2},{\h});
\coordinate (D) at ({\r/2},{-\h});

\tikzset{
  one tick/.style={
    postaction={
      decorate,
      decoration={
        markings,
        mark=at position 0.5 with {
          \draw (0,-3pt) -- (0,3pt);
        }
      }
    }
  },
  two ticks/.style={
    postaction={
      decorate,
      decoration={
        markings,
        mark=at position 0.47 with {
          \draw (0,-3pt) -- (0,3pt);
        },
        mark=at position 0.53 with {
          \draw (0,-3pt) -- (0,3pt);
        }
      }
    }
  }
}

\draw[ thick,one tick] (A) -- (B);
\draw[ thick,two ticks] (B) -- (C);
\draw[ thick,two ticks] (C) -- (D);
\draw[ thick,one tick] (D) -- (A);


\pic[draw,angle radius=7pt] {right angle = A--B--C};
\pic[draw,angle radius=7pt] {right angle = C--D--A};

\fill (A) circle (1.3pt) node[left] {$A$};
\fill (B) circle (1.3pt) node[above] {$B$};
\fill (C) circle (1.3pt) node[right] {$C$};
\fill (D) circle (1.3pt) node[below] {$D$};

\end{tikzpicture}
    \caption{A right kite.}
    \label{fig:kite}
\end{figure}

To formulate the definition precisely, let
\[
\mathbb Z^2_0=\{(m,n)\in\mathbb Z^2:m+n\equiv0\pmod 2\},
\quad
\mathbb Z^2_1=\{(m,n)\in\mathbb Z^2:m+n\equiv1\pmod 2\}.
\]

\begin{definition}\label{def:osgcp}
An \emph{orthogonal square grid circle pattern} is a map
\[
f\colon\mathbb Z^2\to\mathbb C,
\qquad
v\longmapsto f_v,
\]
such that the vertices of every elementary lattice square are mapped to
four distinct points and the following conditions hold.

\begin{enumerate}
\item For every \(v\in\mathbb Z^2_0\), if
\(
w_1,w_2,w_3,w_4\in\mathbb Z^2_1
\)
are the four nearest neighbors of \(v\), then
\[
|f_v-f_{w_1}|
=
|f_v-f_{w_2}|
=
|f_v-f_{w_3}|
=
|f_v-f_{w_4}|.
\]

\item For every \(w\in\mathbb Z^2_1\), let
\(
v_1,v_2,v_3,v_4\in\mathbb Z^2_0
\)
be the four nearest neighbors of \(w\), listed in cyclic order around
\(w\). Then
\[
(f_{v_i}-f_w)
\perp
(f_{v_{i+1}}-f_w),
\qquad
i=1,\dots,4,
\]
where the indices are understood modulo \(4\).
\end{enumerate}
\end{definition}

The geometric interpretation of the definition is as follows. For every
\(v\in\mathbb Z^2_0\), condition~(1) implies that there exists a circle
centered at \(f_v\) and passing through the images of the four neighbors
of \(v\); denote it by \(C_v\). The circles \(C_v\) and
\(C_{v+(2,0)}\) pass through the common point \(f_{v+(1,0)}\). Since
the segments \(f_{v+(1,0)}-f_v\) and
\(f_{v+(1,0)}-f_{v+(2,0)}\) are collinear by condition~(2), these
circles are tangent. The same holds for \(C_v\) and \(C_{v+(0,2)}\).
Therefore
\[
\{C_v\mid v\in2\mathbb Z^2\}
\quad\text{and}\quad
\{C_v\mid v\in(1,1)+2\mathbb Z^2\}
\]
are circle packings.
Now let \(v,w\in\mathbb Z^2_0\) satisfy
\[
w-v\in\{(\pm1,\pm1)\}.
\]
Then \(C_v\) and \(C_w\) pass through each of the points
\(f_u\), where \(u\) is a common neighbor of \(v\) and \(w\). At either
intersection point, the corresponding radii are orthogonal by
condition~(2). Hence \(C_v\) and \(C_w\) intersect orthogonally.

Thus an orthogonal square grid circle pattern consists of two mutually
orthogonal circle packings; see Figure~\ref{fig:kitescircles}. This
recovers the geometric description given in the introduction.

\begin{figure}[H]
  \centering

\begin{minipage}[c]{0.3\textwidth}
    \centering
    \includegraphics[height=0.7\linewidth]{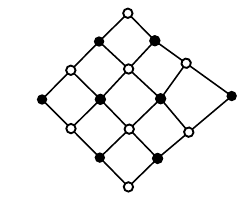}
\end{minipage}
\begin{minipage}[c]{0.20\textwidth}
    \centering
    \begin{tikzpicture}
\draw[-Latex, dashed, very thick] (0,0) -- (1,0);
\end{tikzpicture}
\end{minipage}
\begin{minipage}[c]{0.3\textwidth}
    \centering
    \includegraphics[height=0.7\linewidth]{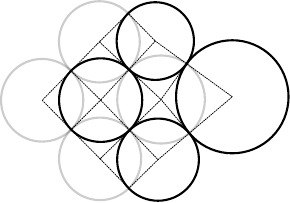}
\end{minipage}
    \caption{Equivalence of two definitions of orthogonal square grid circle patterns. From Definition \ref{def:osgcp} to two mutually orthogonal circle packings. Vertices of \(\mathbb Z^2_0\) are shown in black and vertices of \(\mathbb Z^2_1\) in white. }
    \label{fig:kitescircles}
\end{figure}

\begin{proposition}
Every orthogonal square grid circle pattern is a discrete conformal map.
\end{proposition}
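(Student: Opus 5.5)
Since discrete conformality is defined square by square through \eqref{eq:CRE}, the plan is to prove the local statement: the image of every elementary lattice square is a right kite, and every right kite has cross-ratio \(-1\). The global statement then follows at once.

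\emph{Step 1: each elementary square maps to a right kite.} Every elementary square of \(\mathbb Z^2\) has two opposite vertices \(v,v'\in\mathbb Z^2_0\) and two opposite vertices \(w,w'\in\mathbb Z^2_1\); each \(w\)-type vertex is a common neighbor of \(v\) and \(v'\). By condition~(1) of Definition~\ref{def:osgcp}, \(|f_v-f_w|=|f_v-f_{w'}|\) and \(|f_{v'}-f_w|=|f_{v'}-f_{w'}|\). The four neighbors of \(w\), listed cyclically around \(w\), include \(v\) and \(v'\) as consecutive members, since they are adjacent in the square. Condition~(2) therefore gives \((f_v-f_w)\perp(f_{v'}-f_w)\), and likewise at \(w'\). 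So \(f_v,f_w,f_{v'},f_{w'}\) is a right kite with apexes \(f_v,f_{v'}\) and right angles at \(f_w,f_{w'}\). The four points are distinct by the hypothesis of the definition.

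\emph{Step 2: a right kite is harmonic.} Apply a similarity, which preserves cross-ratios, so that \(f_w=0\), \(f_v=1\), and \(f_{v'}=ir\) with \(r\in\mathbb R\setminus\{0\}\); the right angle at \(f_w\) is what allows this placement. The point \(f_{w'}\) is the reflection of \(0\) in the line through \(1\) and \(ir\), because \(f_{w'}\) is equidistant from \(f_v\) and \(f_{v'}\) in the same way as \(f_w\), and it is distinct from \(f_w\). Computing this reflection gives \(f_{w'}=2ir/(r+i)\) (up to sign conventions), and a direct substitution into \eqref{eq:cross-ratio} should give \(-1\).

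A cleaner alternative for Step 2 is to observe that \(f_w\) and \(f_{w'}\) are the two intersection points of the orthogonal circles centered at \(f_v\) and \(f_{v'}\). Such a pair is inverse with respect to the circle through \(f_v,f_{v'}\), which yields harmonicity. Yet another route is that the kite is symmetric under reflection in the axis \(f_vf_{v'}\); an anti-Möbius symmetry swapping \(f_w,f_{w'}\) and fixing the other two vertices forces the cross-ratio to be real and to satisfy \(x=1/x\) or a similar relation. That symmetry alone is not enough, however, so the orthogonality is still needed.

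\emph{Main obstacle.} The main work is bookkeeping the vertex order in \([f_{i,j},f_{i+1,j},f_{i+1,j+1},f_{i,j+1}]\). Cyclic order around the square alternates between the \(\mathbb Z^2_0\) and \(\mathbb Z^2_1\) classes. The cross-ratio \(-1\) is invariant under the dihedral symmetries of the square that preserve this alternating pattern, namely cyclic shift by one, since \([b,c,d,a]=1/[a,b,c,d]\), together with reversal. So it suffices to verify the value for one labeling, which is the explicit computation in Step 2.
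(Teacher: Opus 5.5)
Step 1 and your remark on vertex order are correct. The gap is in Step 2, the only substantive step: you do not carry it out, and the formula you give is wrong. The reflection of $0$ in the line through $1$ and $ir$ is
\[
f_{w'}=\frac{2r}{r-i}=\frac{2r(r+i)}{r^2+1},
\]
not $2ir/(r+i)$. Your expression is the reflection of $0$ in the line through $r$ and $i$, i.e.\ the two legs are interchanged. Substituting it does not give $-1$; for $r=2$ one gets $(-3+2i)/13$. With the correct point you have $f_v-f_w=1$, $f_{v'}-f_{w'}=ir(r+i)/(r-i)$, $f_w-f_{v'}=-ir$ and $f_{w'}-f_v=(r+i)/(r-i)$, and the cross-ratio is indeed $-1$. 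So the route works, but you need to do this computation rather than assert it.

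Your alternatives do not close the gap either. The points $f_w,f_{w'}$ lie on the circle with diameter $f_vf_{v'}$; they are not inverse in it. Their symmetry in the line $f_vf_{v'}$ holds for any two intersecting circles centered at $f_v,f_{v'}$, orthogonal or not. As an anti-M\"obius map fixing $f_v,f_{v'}$ and swapping $f_w,f_{w'}$, this reflection gives $\overline{x}=1/x$ for $x=[f_v,f_w,f_{v'},f_{w'}]$, i.e.\ $|x|=1$. It does not give that $x$ is real. It does not give $x=1/x$ either, which would already force $x=\pm1$ without orthogonality and contradicts your own caveat. Realness is what the right angles at $f_w,f_{w'}$ supply: they put all four points on the circle with diameter $f_vf_{v'}$. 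Then $x$ is real and unimodular, so $x=\pm1$, and $x=1$ would force $f_v=f_{v'}$ or $f_w=f_{w'}$.

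That corrected synthetic argument is exactly the paper's proof, written in the edge-ratio variables $y_i=z_i/z_{i-1}$ of the quadrilateral. The right angles make two opposite $y_i$ purely imaginary, and the equal sides make the other two unimodular. The cross-ratio equals $y_1y_3$, hence is real. It also equals $(y_2y_4)^{-1}$ because $y_1y_2y_3y_4=1$, hence is unimodular. The value $+1$ is excluded by distinctness of the vertices, which plays the role of your $f_{w'}\neq f_w$. Compared with your coordinate route, this needs no normalization, no explicit reflection and no separate bookkeeping of vertex order. It also shows directly which hypothesis gives which property of the cross-ratio.
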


\begin{proof}
Let \(p_1,p_2,p_3,p_4\) be the images of the vertices of an elementary
lattice square, listed in cyclic order. We use cyclic indices modulo \(4\)
and set
\[
z_i=p_{i+1}-p_{i},
\qquad i=1,\dots,4.
\]
Since the vertices of every elementary lattice square are mapped to
distinct points, all \(z_i\) are nonzero. Define
\[
y_i=\frac{z_{i}}{z_{i-1}},
\qquad i=1,\dots,4.
\]

By the definition of an orthogonal square grid circle pattern, two
opposite elements of the collection \(y_1,y_2,y_3,y_4\) lie on the
imaginary axis, while the other two lie on the unit circle. Without loss
of generality, we may assume that
\[
y_1,y_3\in i\mathbb R,
\qquad
|y_2|=|y_4|=1.
\]
Since
\(
y_1y_2y_3y_4
=
1,
\)
we obtain
\[
y_1y_3=(y_2y_4)^{-1}.
\]
The left-hand side is real, being the product of two purely imaginary
numbers, while the right-hand side lies on the unit circle. Hence
\[
y_1y_3=\pm1.
\]
On the other hand,
\[
[p_1,p_2,p_3,p_4]
=
\frac{(p_1-p_2)(p_3-p_4)}
     {(p_2-p_3)(p_4-p_1)}
=
\frac{z_2z_4}{z_3z_1}
=
y_1y_3.
\]
Therefore
\[
[p_1,p_2,p_3,p_4]=\pm1.
\]
If
\(
[p_1,p_2,p_3,p_4]=1,
\)
then
either \(p_1=p_3\) or \(p_2=p_4\), contradicting the assumption
that the four vertices of an elementary lattice square are mapped to
distinct points. Consequently,
\[
[p_1,p_2,p_3,p_4]=-1.
\]

Thus every elementary quadrilateral is harmonic, and therefore \(f\) is
a discrete conformal map.
\end{proof}

\subsection{Initial value problem}

In this section, we formulate the initial value problem for orthogonal
square grid circle patterns. We show that the corresponding solution map
is obtained by restricting the solution map for discrete conformal maps
to a suitable invariant subset of polygons, called \emph{right
multikites}.
\begin{figure}[!htb]

\centering
\begin{tikzpicture}[scale = 1.5]
\draw[step=0.5cm,color=gray, opacity = 0.5] (0,0) grid (2,2);

\draw[very thick, shorten <=2.5pt, shorten >=2.5pt]
  (0,0) -- (0,0.5) -- (0.5,0.5) -- (0.5,1) -- (1,1)
  -- (1,1.5) -- (1.5,1.5) -- (1.5,2) -- (2,2);

\foreach \i in {0,...,4} {
  \foreach \j in {0,...,4} {
    \pgfmathtruncatemacro{\parity}{mod(\i+\j,2)}
    \ifnum\parity=0
      \filldraw[fill=black, draw=black] (0.5*\i,0.5*\j) circle (2pt);
    \else
      \filldraw[fill=white, draw=black] (0.5*\i,0.5*\j) circle (2pt);
    \fi
  }
}
\end{tikzpicture}
\caption{A staircase. Vertices of \(\mathbb Z^2_0\) are shown in black and vertices of \(\mathbb Z^2_1\) in white.}
\label{fig:staircase}
\end{figure}

Since every orthogonal square grid circle pattern is a discrete
conformal map, it is uniquely determined by its values on any zigzag.
In this paper we only consider zigzags of staircase type. We assume
that the staircase starts at a vertex
\(
v_1\in\mathbb Z^2_0
\)
and initially proceeds upward, as shown in
Figure~\ref{fig:staircase}. 

Let \(\zeta\) be a staircase zigzag,
and set
\[
p_i:=f_{\zeta_i},
\qquad
z_i:=p_{i+1}-p_i.
\]
If \(f\) is an orthogonal square grid circle pattern, then
\begin{equation}\label{eq:staircase}
z_{2k-1}\perp z_{2k},
\qquad
|z_{2k}|=|z_{2k+1}|,
\end{equation}
for every \(k\).

\begin{definition}
A polygon \((p_i)\) whose edge vectors satisfy
\eqref{eq:staircase} is called a \emph{right multikite}; see
Figure~\ref{fig:multikite}.
\end{definition}

\begin{figure}[!htb]
\centering
\usetikzlibrary{angles,quotes,decorations.markings}

\begin{tikzpicture}[
  scale=0.9,
  line cap=round,
  line join=round
]

\tikzset{
  tick one/.style={
    postaction={decorate,
      decoration={markings,
        mark=at position 0.58 with {
          \draw (0pt,-4pt) -- (0pt,4pt);
        }
      }
    }
  },
  tick two/.style={
    postaction={decorate,
      decoration={markings,
        mark=at position 0.58 with {
          \draw (-2.5pt,-4pt) -- (-2.5pt,4pt);
          \draw ( 2.5pt,-4pt) -- ( 2.5pt,4pt);
        }
      }
    }
  },
  tick three/.style={
    postaction={decorate,
      decoration={markings,
        mark=at position 0.58 with {
          \draw (-4pt,-4pt) -- (-4pt,4pt);
          \draw ( 0pt,-4pt) -- ( 0pt,4pt);
          \draw ( 4pt,-4pt) -- ( 4pt,4pt);
        }
      }
    }
  },
}

\pgfmathsetmacro{\thetaA}{55}
\pgfmathsetmacro{\thetaB}{40}
\pgfmathsetmacro{\thetaC}{65}

\pgfmathsetmacro{\rA}{1.5}
\pgfmathsetmacro{\rB}{\rA*tan(\thetaA)}
\pgfmathsetmacro{\rC}{\rB*tan(\thetaB)}

\pgfmathsetmacro{\LA}{\rA/cos(\thetaA)}
\pgfmathsetmacro{\LB}{\rB/cos(\thetaB)}
\pgfmathsetmacro{\LC}{\rC/cos(\thetaC)}

\pgfmathsetmacro{\xA}{0}
\pgfmathsetmacro{\xB}{\LA}
\pgfmathsetmacro{\xC}{\LA+\LB}

\coordinate (p1) at (\xA,0);
\coordinate (p3) at ({\xA+\LA},0);
\coordinate (p5) at ({\xB+\LB},0);
\coordinate (p7) at ({\xC+\LC},0);

\coordinate (p2) at ({\xA+\rA*cos(\thetaA)},{\rA*sin(\thetaA)});
\coordinate (q1) at ({\xA+\rA*cos(\thetaA)},{-\rA*sin(\thetaA)});

\coordinate (p4) at ({\xB+\rB*cos(\thetaB)},{\rB*sin(\thetaB)});
\coordinate (q2) at ({\xB+\rB*cos(\thetaB)},{-\rB*sin(\thetaB)});

\coordinate (p6) at ({\xC+\rC*cos(\thetaC)},{\rC*sin(\thetaC)});
\coordinate (q3) at ({\xC+\rC*cos(\thetaC)},{-\rC*sin(\thetaC)});

\draw[thick]  (p1) -- node[pos=.5,left] {\footnotesize$z_1$} (p2);
\draw[thick,tick one]  (p2) -- node[pos=.5,left] {\footnotesize$z_2$} (p3);

\draw[thick,tick one]   (p3) -- node[pos=.5,left] {\footnotesize$z_3\,$} (p4);
\draw[thick,tick two] (p4) -- node[pos=.3,right] {\footnotesize$z_4$} (p5);

\draw[thick,tick two] (p5) -- node[pos=.5,right] {\footnotesize$\,z_5$} (p6);
\draw[thick]  (p6) -- node[pos=.4,right] {\footnotesize$\,\,z_6$} (p7);


\pic[draw,angle radius=6pt] {right angle = p1--p2--p3};

\pic[draw,angle radius=6pt] {right angle = p3--p4--p5};

\pic[draw,angle radius=6pt] {right angle = p5--p6--p7};

\fill (p1) circle (1.2pt) node[below] {\footnotesize$p_1$};
\fill (p2) circle (1.2pt) node[above] {\footnotesize$p_2$};
\fill (p3) circle (1.2pt) node[below] {\footnotesize$p_3$};
\fill (p4) circle (1.2pt) node[above] {\footnotesize$p_4$};
\fill (p5) circle (1.2pt) node[below] {\footnotesize$p_5$};
\fill (p6) circle (1.2pt) node[above] {\footnotesize$p_6$};
\fill (p7) circle (1.2pt) node[below] {\footnotesize$p_7$};

\draw[
    opacity=0.3
] (p3) circle[radius=2.15];
\draw[
    opacity=0.3
] (p5) circle[radius=1.8];


\end{tikzpicture}
\caption{A right multikite.}
\label{fig:multikite}
\end{figure}

\begin{remark}
Right multikites admit a simple description in terms of circle
configurations. For each odd vertex \(p_i\), let \(C_i\) denote the
circle centered at \(p_i\) and passing through the adjacent vertices
\(p_{i-1}\) and \(p_{i+1}\), as illustrated in
Figure~\ref{fig:multikite}. The defining right-angle condition is then
equivalent to the requirement that each pair of consecutive circles
\(C_i\) and \(C_{i+1}\) intersect orthogonally.

Conversely, let \(C_i\) be a sequence of circles satisfying
\(C_i\perp C_{i+1}\) for every \(i\). Taking the centers of the circles
as the odd vertices and choosing, for each consecutive pair
\(C_i,C_{i+1}\), one of their two intersection points as the
intermediate even vertex reconstructs a right multikite. Thus right
multikites are in one-to-one correspondence with sequences of circles
\(C_i\) satisfying \(C_i\perp C_{i+1}\), together with a choice of one
intersection point for each consecutive pair.

\end{remark}

The following proposition shows that condition
\eqref{eq:staircase} characterizes orthogonal square grid circle
patterns among discrete conformal maps.

\begin{proposition}
Let \(f:\mathbb Z^2\to\mathbb C\) be a discrete conformal map. If the
initial data on a staircase zigzag form a right multikite, then \(f\) is
an orthogonal square grid circle pattern.
\end{proposition}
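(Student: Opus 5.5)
The argument is local. We propagate the right-kite property one elementary square at a time, using the fact that each step of the Cauchy problem is a folding. The key local claim is the following. Let $a,b,c,d$ be three known vertices $a,b,c$ of an elementary square together with a fourth vertex $d$ determined by the cross-ratio equation $[a,b,c,d]=-1$, i.e. $d$ is the folding of $b$ in the triple $(a,b,c)$. Suppose the three known vertices form half of a right kite. Concretely, if the middle vertex $b$ lies in $\mathbb Z^2_1$, assume $(a-b)\perp(c-b)$. If $b$ lies in $\mathbb Z^2_0$, assume $|a-b|=|c-b|$. Then $abcd$ is a right kite. This should follow from the $y$-coordinate picture used in the proof that circle patterns are discrete conformal. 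The condition $[a,b,c,d]=-1$ means $y_1y_3=-1$ in the notation there, where $y_2$ is the ratio of the two known edges at $b$. If $y_2\in i\mathbb R$, the harmonic fourth point is the reflection of $b$ in the line $ac$. If $|y_2|=1$, it is the point obtained by inversion in the circle through $a,c$ centred appropriately, which is again the mirror kite. Either way the quadrilateral is a right kite. Geometrically, a right kite is the unique harmonic completion, since the folding is unique.

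Next I would check that the initial staircase data satisfy exactly these hypotheses at every corner. Along a staircase starting at $v_1\in\mathbb Z^2_0$ and going up, the odd-indexed vertices $p_{2k-1}$ lie in $\mathbb Z^2_0$ and the even-indexed ones in $\mathbb Z^2_1$. The conditions in \eqref{eq:staircase}, $z_{2k-1}\perp z_{2k}$ at the $\mathbb Z^2_1$ vertex $p_{2k}$ and $|z_{2k}|=|z_{2k+1}|$ at the $\mathbb Z^2_0$ vertex $p_{2k+1}$, are precisely the half-kite hypotheses at every corner. Folding every corner of one type, as in the elementary moves of Figure~\ref{fig:zzfolding}, produces a layer of right kites. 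Inspecting one such kite, the new zigzag is again a staircase, shifted by $(1,-1)$, whose vertices are again half-kites. The reason is that in a right kite the new vertex $d$ inherits the opposite-type relation: $|d-a|=|b-a|$ holds where $a\in\mathbb Z^2_0$, and $(d-c)\perp(d-a)$ holds at $d\in\mathbb Z^2_1$. The relation at the two unchanged old vertices, taken with respect to their new neighbours, also needs to be checked. I expect this bookkeeping to be the main obstacle: one must verify, for both parities of the folded corner, that the new staircase satisfies \eqref{eq:staircase} again. In particular one needs to see the equal-length and orthogonality conditions transported through a vertex shared by two adjacent kites. The tool here is that a right kite has equal sides at its $\mathbb Z^2_0$ apex and a right angle at its $\mathbb Z^2_1$ vertices, combined with the rotation/reflection symmetry of the kite.

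By induction using $\mathcal S$ and $\mathcal S^{-1}$ (the argument is symmetric), every elementary square of $\mathbb Z^2$ is a right kite. Finally, right kites on all squares give Definition~\ref{def:osgcp}. Around $v\in\mathbb Z^2_0$, the four kites share edges, so all four distances $|f_v-f_{w_i}|$ are equal. Around $w\in\mathbb Z^2_1$, each kite provides a right angle between consecutive edges. Distinctness of the four vertices of each square holds generically, as already implicit in the well-posedness of the Cauchy problem.
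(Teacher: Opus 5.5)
Your proposal is correct and follows the paper's proof. Your local claim is exactly the paper's observation that a harmonic quadrilateral with a right angle, or with two equal adjacent sides, is a right kite, and your staircase-to-staircase propagation in both directions is the same argument.

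Two small fixes are worth making. First, the bookkeeping you flag as the main obstacle is a one-line check. At an unchanged vertex $c\in\mathbb Z^2_0$ with old neighbours $b,e$ and new neighbours $d,h$, the two adjacent kites and the original condition give $|c-d|=|c-b|=|c-e|=|c-h|$. Orthogonality at the other half-step follows because two lines perpendicular to a common line are parallel. Second, distinctness of the four vertices is automatic, not merely generic: with adjacent values distinct, $a=c$ or $b=d$ would force $[a,b,c,d]=1$ rather than $-1$.
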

\begin{proof}
We use the following elementary observation: a harmonic quadrilateral
with a right angle is a right kite, and so is a harmonic quadrilateral
with two equal adjacent sides.

Assume that the image of the solid staircase in
Figure~\ref{fig:staircase2} is a right multikite. Since \(f\) is a
discrete conformal map, the image of every shaded cell is a harmonic
quadrilateral. Moreover, every such quadrilateral has a right angle by
the definition of a right multikite. Hence the image of each shaded cell
is a right kite. It follows that the image of the dotted staircase is
again a right multikite.

\begin{figure}[!htb]
\centering
\begin{tikzpicture}[scale=1.5]
\draw[step=0.5cm, color=gray, opacity=0.5] (0,0) grid (2,2);

\foreach \k in {0,1,2,3} {
  \fill[gray, opacity=0.18]
    ({0.5*\k},{0.5*\k})
    -- ({0.5*\k},{0.5*\k+0.5})
    -- ({0.5*\k+0.5},{0.5*\k+0.5})
    -- ({0.5*\k+0.5},{0.5*\k})
    -- cycle;
}

\draw[very thick, shorten <=2.5pt, shorten >=2.5pt]
  (0,0) -- (0,0.5) -- (0.5,0.5) -- (0.5,1) -- (1,1)
  -- (1,1.5) -- (1.5,1.5) -- (1.5,2) -- (2,2);

\draw[very thick, dotted, shorten <=2.5pt, shorten >=2.5pt]
  (0,0) -- (0.5,0) -- (0.5,0.5) -- (1,0.5)
  -- (1,1) -- (1.5,1) -- (1.5,1.5) -- (2,1.5) -- (2,2);

\foreach \i in {0,...,4} {
  \foreach \j in {0,...,4} {
    \pgfmathtruncatemacro{\parity}{mod(\i+\j,2)}
    \ifnum\parity=0
      \filldraw[fill=black, draw=black] (0.5*\i,0.5*\j) circle (2pt);
    \else
      \filldraw[fill=white, draw=black] (0.5*\i,0.5*\j) circle (2pt);
    \fi
  }
}
\end{tikzpicture}
\caption{Propagation of intial data to next staircase.}
\label{fig:staircase2}
\end{figure}

Repeating the same argument propagates the right multikite condition from
one staircase to the next. At each step, the cells between consecutive
staircases are mapped to harmonic quadrilaterals which either have a
right angle or have two equal adjacent sides, and hence are right
kites. Since \(\mathbb Z^2\) is covered by a sequence of pairwise adjacent staircases, every
cell of \(\mathbb Z^2\) is mapped to a right kite. Therefore \(f\) is an
orthogonal square grid circle pattern.
\end{proof}

As in the case of general discrete conformal maps, we shall be interested
in orthogonal square grid circle patterns that are quasi-periodic. We take
the period vector to be
\(
{\per}=(m,m),
\)
which is compatible with the staircase-shaped initial data considered
above. Thus we study orthogonal square grid circle patterns satisfying
\[
f_{i+m,j+m}=f_{i,j}+\Delta
\]
for some \(\Delta\in\C\).

For \(n=2m\), let
\(
\mathcal K_n\subset\mathcal P_n
\)
denote the set of quasi-periodic \(n\)-gons that are right multikites. By the preceding proposition, \(\mathcal K_{n}\)
coincides with the set of staircase initial data of quasi-periodic
orthogonal square grid circle patterns. In particular,
\(\mathcal K_{n}\) is invariant under the solution map~\(\mathcal S\).

Therefore, iterating the solution map on \(\mathcal K_n\) is equivalent
to solving the initial value problem for quasi-periodic orthogonal square
grid circle patterns. Passing to the quotient by affine
transformations, we obtain the induced birational map
\begin{equation}\label{eq:rsmap}
\mathcal S|_{\mathcal K_n/\Aff(\C)}\colon
\mathcal K_n/\Aff(\C)
\dashrightarrow
\mathcal K_n/\Aff(\C).
\end{equation}
In the following sections, we show that this quotient map is completely
integrable. To this end, we show that the geometric structures on
\(
\mathcal P_n/\Aff(\mathbb C)
\)
introduced for quasi-periodic discrete conformal maps restrict naturally
to
\(
\mathcal K_n/\Aff(\mathbb C)
\).


\subsection{The multikite space as a real part of a symplectic leaf}
In this section, we show that
\(
\mathcal K_n/\Aff(\mathbb C)
\)
is the fixed-point set of a suitable anti-regular involution on a
symplectic leaf of
\(
\mathcal P_n/\Aff(\mathbb C)
\),
and hence the real locus of that leaf. We further show that the
symplectic form on the leaf induces a symplectic structure on
\(
\mathcal K_n/\Aff(\mathbb C)
\).

Recall that the space \(\mathcal P_n/\Aff(\mathbb C)\) is identified with the hypersurface
\(
y_1\cdots y_n=1
\)
in the torus \((\C^\times)^n\). It carries the Poisson structure
\eqref{eq:logcanonical}. 
For even
\(n=2m\), this Poisson structure  has a Casimir function
\[
C:=y_1y_3\cdots y_{2m-1}.
\]

\begin{proposition}\label{prop:Im-C}
Let \(n=2m\). Then the integral \(I_m\) defined in
Section~\ref{sec:fi} satisfies
\begin{equation}\label{eq:ICrel}
    I_m=C+(-1)^mC^{-1}. 
\end{equation}
In particular, \(I_m\) is a Casimir of the Poisson structure.
\end{proposition}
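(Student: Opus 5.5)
The plan is to compute the top coefficient of \(\tr\mathbf Y(t)\) directly, using the description of \(I_m\) from Proposition~\ref{prop:Iy}. We work in the setting of this section: \(n=2m\), period \(\per=(m,m)\), and the staircase zigzag. Since the staircase starts at a vertex of \(\mathbb Z^2_0\) and first goes upward, its edges alternate vertical, horizontal, vertical, and so on. Hence \(\varepsilon_i=-1\) for odd \(i\) and \(\varepsilon_i=1\) for even \(i\), with indices taken modulo \(n\).

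First split each factor as
\[
Y_i(t)=L_i+tU_i,\qquad L_i=\begin{pmatrix}1&0\\1&y_i\end{pmatrix},\qquad U_i=\begin{pmatrix}0&\varepsilon_{i-1}y_i\\0&0\end{pmatrix}.
\]
Expanding \(\tr(Y_1\cdots Y_{2m})\), the coefficient of \(t^m\) is a sum over \(m\)-element subsets \(S\subset\{1,\dots,2m\}\). The term for \(S\) is the trace of the product in which \(U_i\) is used for \(i\in S\) and \(L_i\) otherwise. Since \(U_iU_j=0\) for all \(i,j\), any term with two cyclically adjacent \(U\)'s vanishes; cyclic adjacency is enough because the trace is invariant under cyclic shifts. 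An \(m\)-subset of a \(2m\)-cycle with no two cyclically adjacent elements is either the set of all odd indices or the set of all even indices. So only two terms survive.

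Next evaluate the two surviving terms. For odd \(i\), the product \(U_iL_{i+1}\) has zero second row and first row \(\varepsilon_{i-1}(y_i,\;y_iy_{i+1})\). Thus \(U_iL_{i+1}=e_1r_i\) for a row vector \(r_i\) whose first entry is \(\varepsilon_{i-1}y_i\). The trace of a product of such rank-one matrices \(e_1r_1\,e_1r_3\cdots e_1r_{2m-1}\) is the product of the scalars \(r_i e_1\). This gives \(\prod_{i\ \mathrm{odd}}\varepsilon_{i-1}y_i\). The even-index term is handled the same way after cyclically rotating the product inside the trace, giving \(\prod_{i\ \mathrm{even}}\varepsilon_{i-1}y_i\).

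By the staircase sign pattern, \(\varepsilon_{i-1}=1\) for odd \(i\), so the first term equals \(y_1y_3\cdots y_{2m-1}=C\). For even \(i\) we have \(\varepsilon_{i-1}=-1\), so the second term equals \((-1)^m y_2\cdots y_{2m}\). On \(\mathcal P_n/\Aff(\mathbb C)\) we have \(y_1\cdots y_n=1\), so \(y_2\cdots y_{2m}=C^{-1}\). This yields \eqref{eq:ICrel}. Finally, \(C\) Poisson commutes with every \(y_j\) under \eqref{eq:logcanonical}: \(\{y_{2k\pm1},y_j\}\) is nonzero only for \(j\) even, and then the two odd neighbours of \(y_j\) contribute \(\pm Cy_j\) with opposite signs. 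Therefore \(C\), and hence \(I_m=C+(-1)^mC^{-1}\), is a Casimir.

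The main point needing care is bookkeeping rather than any genuine obstacle. One must check that the sign convention \(\varepsilon_{i-1}\) in \eqref{eq:ymatrix}, combined with the staircase orientation, really puts the factor \(1\) on \(C\) and \((-1)^m\) on \(C^{-1}\), rather than the reverse. One must also check the cyclic-trace argument at the wrap-around index \(i=2m\to1\), where \(\varepsilon_0=\varepsilon_{2m}\).
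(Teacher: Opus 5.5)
Your proof is correct and follows the same route as the paper. Both read off the coefficient of \(t^m\) in \(\tr\mathbf Y(t)\) as the two alternating products \(\prod_{i\ \mathrm{odd}}\varepsilon_{i-1}y_i\) and \(\prod_{i\ \mathrm{even}}\varepsilon_{i-1}y_i\), then apply \(y_1\cdots y_{2m}=1\) and the staircase signs \(\varepsilon_i=(-1)^i\). You add justification the paper leaves implicit: the \(U_iU_j=0\) and independent-set argument for why only two terms survive, and an explicit check that \(C\) is a Casimir.
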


\begin{proof}
The integral $I_m$ is the coefficient of \(t^m\) in
\[
\tr\bigl(Y_1(t)\cdots Y_{2m}(t)\bigr)
\]
where $Y_i(t)$ is given by \eqref{eq:ymatrix}. So,
\[
I_m
=
\varepsilon_{2m}y_1\varepsilon_{2}y_3\cdots\varepsilon_{2m-2}y_{2m-1}
+
\varepsilon_1y_2 \varepsilon_{3}y_4\cdots\varepsilon_{2m-1}y_{2m}. 
\]
Since
\(
y_1\cdots y_{2m}=1,
\)
this becomes
\[
I_m
=
(\varepsilon_2\cdots\varepsilon_{2m})\,C + (\varepsilon_1\cdots\varepsilon_{2m-1})\,C^{-1}.
\]
For staircase initial data,
\[
\varepsilon_1\cdots\varepsilon_{2m-1}=(-1)^m,
\qquad
\varepsilon_2\cdots\varepsilon_{2m}=1,
\]
which proves the identity.
\end{proof}

\begin{proposition}
The multikite space \(\mathcal K_{2m}/\Aff(\mathbb C)\) is contained in the symplectic
leaf
\begin{equation}\label{eq:leaf}
C=\pm (\sqrt{-1})^m.
\end{equation}
\end{proposition}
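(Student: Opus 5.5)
Since the multikite conditions \eqref{eq:staircase} are stated purely in terms of the edge vectors, the plan is to rewrite them in the coordinates \(y_i=z_i/z_{i-1}\) and then evaluate the Casimir \(C=y_1y_3\cdots y_{2m-1}\) directly.

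The two conditions say different things about the odd- and even-indexed ratios. The condition \(z_{2k-1}\perp z_{2k}\) means that \(y_{2k}=z_{2k}/z_{2k-1}\) is purely imaginary, i.e. \(y_{2k}\in i\mathbb R^\times\). The condition \(|z_{2k}|=|z_{2k+1}|\) means that \(|y_{2k+1}|=1\). With indices taken cyclically modulo \(n=2m\), every odd-indexed \(y\) therefore lies on the unit circle and every even-indexed \(y\) is purely imaginary. These conditions are invariant under affine rescaling, so they descend to \(\mathcal K_{2m}/\Aff(\mathbb C)\).

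To compute \(C\), write \(E:=y_2y_4\cdots y_{2m}\). On the quotient we have \(y_1\cdots y_{2m}=1\), so \(C\cdot E=1\) and \(C=E^{-1}\).
\begin{itemize}
\item \(C\) is a product of unit-modulus numbers, so \(|C|=1\).
\item \(E\) is a product of \(m\) purely imaginary numbers, so \(E\in(\sqrt{-1})^m\mathbb R^\times\). Hence \(C=E^{-1}\) also lies on the line \((\sqrt{-1})^{-m}\mathbb R=(\sqrt{-1})^{m}\mathbb R\).
\end{itemize}
A unit-modulus complex number on the real line through \((\sqrt{-1})^m\) must equal \(\pm(\sqrt{-1})^m\). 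This gives \eqref{eq:leaf}.

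It remains to check that this level set is a symplectic leaf and not merely a union of leaves. For \(n=2m\), the proof of Theorem~\ref{thm1} shows that on the hypersurface \(y_1\cdots y_n=1\) the bracket has rank \(n-2\), with the single Casimir \(C\). Its generic symplectic leaves are therefore the connected components of the level sets of \(C\). The level set \(C=c\) inside \(\{y_1\cdots y_n=1\}\) is a subtorus-coset, and I expect it to be connected. Each of the values \(\pm(\sqrt{-1})^m\) then cuts out a single leaf, and \(\mathcal K_{2m}/\Aff(\mathbb C)\) lies in the union of the two, as claimed.

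The main obstacle is bookkeeping rather than anything conceptual. One must get the parity assignment right, i.e. that the odd \(y\)'s are the unimodular ones and the even \(y\)'s the imaginary ones. One must also check that the cyclic wrap-around condition \(|z_{2m}|=|z_{1}|\) indeed gives \(|y_1|=1\); this holds by \(2m\)-periodicity of the edges.
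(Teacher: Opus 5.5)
Your argument is correct and is essentially the paper's proof: unimodularity of the odd $y_i$ gives $|C|=1$, while the even $y_i$ being purely imaginary together with $y_1\cdots y_{2m}=1$ gives $C\in(\sqrt{-1})^m\mathbb R$, so $C\in\{\pm(\sqrt{-1})^m\}$. Your final paragraph on leaves is not needed, because the paper uses ``symplectic leaf'' to mean a possibly disconnected Poisson submanifold on which the bracket is nondegenerate; your connectedness expectation does hold anyway, since each level set $C=c$ is isomorphic to $(\mathbb C^\times)^{2m-2}$.
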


\begin{remark}
Since \(C\) is the only Casimir of the Poisson structure on
\(\mathcal P_{2m}\), the restriction of the Poisson
structure to \(C=\pm (\sqrt{-1})^m\) is nondegenerate. Hence
\(C=\pm (\sqrt{-1})^m\) may be viewed as a symplectic leaf. Here by a
symplectic leaf we mean a Poisson submanifold on which the Poisson
structure is nondegenerate, not necessarily connected.
\end{remark}

\begin{remark}\label{rem:leaf2}
In view of \eqref{eq:ICrel}, equation \eqref{eq:leaf} implies
\begin{equation}\label{eq:leaf2}
I_m=\pm 2(\sqrt{-1})^m. 
\end{equation}
Conversely, for \(I_m\) of this form, equation \eqref{eq:ICrel} has a
double root
\(
C=\pm(\sqrt{-1})^m.
\)
Thus the leaf \eqref{eq:leaf} can equivalently be described by
\eqref{eq:leaf2}.
\end{remark}

\begin{proof}[Proof of the Proposition]
For a right multikite, the defining conditions imply that the even
variables \(y_i\) lie on the imaginary axis, while the odd variables
\(y_i\) lie on the unit circle. From the latter we see that
\[
C=y_1y_3\cdots y_{2m-1}
\]
lies on the unit circle. On the other hand, since
\(
y_2,y_4,\dots,y_{2m}\in \sqrt{-1}\,\mathbb R
\)
and
\(
y_1y_2\cdots y_{2m}=1,
\)
we have
\[
C^{-1}=y_2y_4\cdots y_{2m}\in (\sqrt{-1})^m\mathbb R.
\]
Hence
\(
C\in (\sqrt{-1})^m\mathbb R.
\)
Thus
\[
C\in S^1\cap (\sqrt{-1})^m\mathbb R
=
\{\pm (\sqrt{-1})^m\}.\qedhere
\]
\end{proof}

Our next goal is to show that the multikite space is the real part of
this symplectic leaf, for a suitable real structure.

\begin{definition}
A \emph{real structure} on an affine variety \(X\) over \(\C\) is an
anti-regular involution
\(
\tau\colon X\to X,
\)
see \cite[Definition~2.1.10]{mangolte2020real}. The corresponding
\emph{real part} of \(X\) is the fixed-point set
\[
X^\tau=\{x\in X:\tau(x)=x\},
\]
see \cite[Definition~2.1.22]{mangolte2020real}.
\end{definition}

Any complex affine variety \(X\) with a real structure \(\tau\) admits
a closed embedding into \(\C^N\) that intertwines \(\tau\) with
coordinate-wise complex conjugation on \(\C^N\); see
\cite[Theorem~2.1.30]{mangolte2020real}. Replacing \(X\) by its image
under this embedding, we may therefore assume that \(X\) is cut out by
real polynomials and
\[
X^\tau=X\cap\mathbb R^N.
\]
In particular, \(X^\tau\) is a real affine variety. Its coordinate ring
is naturally identified with the ring of functions on \(X\) fixed by the
\(\C\)-antilinear involution
\begin{equation}\label{eq:ii}
f\longmapsto\overline{\tau^*f}.
\end{equation}
We refer to these functions as \emph{real functions}. Under the
embedding above, they are precisely the regular functions on \(X\) with
real coefficients.


\begin{definition}
A Poisson structure on a complex affine variety \(X\) with a real
structure \(\tau\) is called \emph{real} if the induced involution
\eqref{eq:ii} on the coordinate ring preserves the Poisson bracket, that
is,
\[
\{\overline{\tau^*f},\overline{\tau^*g}\}
=
\overline{\tau^*\{f,g\}}.
\]
In particular, if \(X\) carries a real Poisson structure, then the
Poisson bracket of real functions is again real. Hence, via the
identification of the coordinate ring of \(X^\tau\) with the ring of
real functions on \(X\), it gives a Poisson structure on \(X^\tau\),
which we call the \emph{induced Poisson structure}.
\end{definition}


In our setting, the real structure on the leaf
\[
C=\pm (\sqrt{-1})^m
\]
is defined by $\tau \colon (y_1,\dots, y_{2m}) \mapsto (y_1',\dots, y_{2m}') $ where
\[
y_i'=
\begin{cases}
\overline{y}_i^{-1}, & i \text{ odd},\\
-\overline{y}_i, & i \text{ even}.
\end{cases}
\]

\begin{proposition}\label{prop:realpart}
The map \(\tau\) is a well-defined real structure on the symplectic leaf
\(C=\pm(\sqrt{-1})^m\). Its real part is \(\mathcal K_{2m}/\Aff(\mathbb C)\).
\end{proposition}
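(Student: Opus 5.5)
The plan is to verify the claim directly in the coordinates \(y_i=z_i/z_{i-1}\), in which \(\mathcal P_{2m}/\Aff(\mathbb C)\) is the hypersurface \(y_1\cdots y_{2m}=1\) and the leaf is cut out additionally by \(C=y_1y_3\cdots y_{2m-1}=\pm(\sqrt{-1})^m\). There are three steps: show that \(\tau\) maps the leaf to itself, show that it is an anti-regular involution, and compute its fixed points.

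\emph{Step 1: \(\tau\) preserves the leaf.} Since \(y_1\cdots y_{2m}=1\), we have \(y_2y_4\cdots y_{2m}=C^{-1}\). Writing \(y_i'\) for the components of \(\tau(y)\), we get
\[
\prod_{i \text{ odd}} y_i'=\overline{C}^{-1},
\qquad
\prod_{i\text{ even}} y_i'=(-1)^m\,\overline{C}^{-1}.
\]
Hence the new Casimir is \(C'=\overline C^{-1}\) and the new total product is \((-1)^m\overline{C}^{-2}\). On the leaf, \(|C|=1\), so \(C'=\overline C^{-1}=C\). Moreover \(C^2=(-1)^m\), so \(\overline C^{-2}=\overline{C^{-2}}=(-1)^m\). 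Therefore \(\prod y_i'=1\), and \(\tau\) maps the leaf \(C=\pm(\sqrt{-1})^m\) to itself, with each of its two components preserved. This small computation is the only place where the specific value of \(C\) matters. It is also the step I expect to be the main obstacle, in the sense that it is the only point where something can go wrong: off the leaf, \(\tau\) does not even preserve the hypersurface \(y_1\cdots y_{2m}=1\).

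\emph{Step 2: \(\tau\) is a real structure.} Each component of \(\tau\) is the complex conjugate of a regular function on the torus, namely \(y_i^{-1}\) or \(-y_i\). Hence \(\tau\) is anti-regular. Applying \(\tau\) twice gives \(\overline{(\overline y_i^{-1})}^{\,-1}=y_i\) for odd \(i\) and \(-\overline{(-\overline y_i)}=y_i\) for even \(i\), so \(\tau\) is an involution.

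\emph{Step 3: the fixed-point set.} For odd \(i\), the condition \(\overline y_i^{-1}=y_i\) means \(|y_i|=1\). For even \(i\), the condition \(-\overline y_i=y_i\) means \(y_i\in\sqrt{-1}\,\mathbb R\).

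Now translate the multikite conditions \eqref{eq:staircase} into these coordinates. The condition \(z_{2k-1}\perp z_{2k}\) is equivalent to \(y_{2k}=z_{2k}/z_{2k-1}\in\sqrt{-1}\,\mathbb R\). The condition \(|z_{2k}|=|z_{2k+1}|\) is equivalent to \(|y_{2k+1}|=1\).

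Both conditions are invariant under the rescaling \(z\mapsto az\), so they are well defined on \(\mathcal P_{2m}/\Aff(\mathbb C)\). Every point of the hypersurface \(y_1\cdots y_{2m}=1\) comes from a quasi-periodic polygon. Therefore the fixed points of \(\tau\) on the hypersurface are exactly the classes of right multikites.

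By the preceding Proposition, every such class already lies on the leaf \(C=\pm(\sqrt{-1})^m\); the same conclusion also follows directly, by the argument in that proof, from the conditions on the \(y_i\) above. Hence the real part of the leaf with respect to \(\tau\) is precisely \(\mathcal K_{2m}/\Aff(\mathbb C)\).
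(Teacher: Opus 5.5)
Your proposal is correct and follows essentially the same route as the paper. Both check $\tau^*C=\overline{C}^{-1}=C$ and $\tau^*(y_1\cdots y_{2m})=(-1)^m\overline{C}^{\,-2}=1$ on the leaf, observe that $\tau$ is an anti-regular involution, and identify the fixed-point conditions $|y_i|=1$ for odd $i$ and $y_i\in\sqrt{-1}\,\mathbb R$ for even $i$ with the right-multikite conditions. Your Step 3 just spells out more explicitly than the paper why these conditions are the multikite conditions in the $y$-coordinates and why such points automatically lie on the leaf.
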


\begin{proof}
We first check that \(\tau\) preserves the leaf. Recall that the leaf is
defined inside \((\C^\times)^{2m}\) by
\[
y_1\cdots y_{2m}=1,
\qquad
C:=y_1y_3\cdots y_{2m-1}
=
\pm(\sqrt{-1})^m.
\]
Under \(\tau\),
\[
\tau^*C
=
\overline y_1^{-1}\overline y_3^{-1}\cdots\overline y_{2m-1}^{-1}
=
\overline C^{-1}
=
C.
\]
Here the last equality follows from the fact that
\(C=\pm(\sqrt{-1})^m\), and hence \(|C|=1\). 
Thus the condition \(C=\pm(\sqrt{-1})^m\) is preserved. Similarly,
\[
\begin{gathered}
\tau^*(y_1\cdots y_{2m})
=
\frac{(-\overline y_2)(-\overline y_4)\cdots(-\overline y_{2m})}
{\overline y_1\overline y_3\cdots\overline y_{2m-1}} 
=
(-1)^m
\frac{\overline{y_2 y_4\cdots y_{2m}}}
{\overline{y_1y_3\cdots y_{2m-1}}} \\
=
(-1)^m
\frac{\overline{C^{-1}}}{\overline C}
=
\frac{(-1)^m}{\overline C^{\,2}}
=
\frac{(-1)^m}{\bigl(\pm(-\sqrt{-1})^m\bigr)^2}
=
1.
\end{gathered}
\]
Thus \(\tau\) preserves the defining equations of the leaf. It is clearly anti-regular
and satisfies \(\tau^2=\mathrm{id}\), so it is a real structure. Its fixed point set is given by
\[
y_i=
\begin{cases}
\overline{y}_i^{-1}, & i \text{ odd},\\
-\overline{y_i}, & i \text{ even},
\end{cases}
\]
or equivalently,
\[
|y_i|=1,\ i\text{ odd},
\qquad
y_i\in\sqrt{-1}\,\mathbb R,\ i\text{ even}.
\]
These are precisely the defining conditions for \(\mathcal K_{2m}\).
Hence the real part of the symplectic leaf is \(\mathcal K_{2m}/\Aff(\mathbb C)\).
\end{proof}

\begin{proposition}\label{prop:realPoisson}
The Poisson structure
\(
\sqrt{-1}\{\cdot,\cdot\},
\)
where \(\{\cdot,\cdot\}\) is the Poisson bracket~\eqref{eq:logcanonical}, is real with respect to the real structure
\(\tau\) on the leaf \(C=\pm (\sqrt{-1})^m\).
\end{proposition}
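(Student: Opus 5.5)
The plan is to reduce the reality condition to a check on the coordinate functions \(y_j\). The induced involution \eqref{eq:ii}, \(\iota(f):=\overline{\tau^*f}\), is a \(\C\)-antilinear ring automorphism of the coordinate ring. Write \(\{\cdot,\cdot\}'=\sqrt{-1}\{\cdot,\cdot\}\). We must show \(\{\iota f,\iota g\}'=\iota\{f,g\}'\). Since \(\iota\) is antilinear, \(\iota(\sqrt{-1}\{f,g\})=-\sqrt{-1}\,\iota\{f,g\}\). So the required identity is equivalent to
\[
\{\iota f,\iota g\}=-\iota\{f,g\}
\]
for the bracket \eqref{eq:logcanonical}. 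In words, \(\iota\) should be an antilinear anti-Poisson automorphism.

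First I would work on the ambient torus \((\C^\times)^{2m}\), whose coordinate ring is \(\C[y_j^{\pm1}]\). Here \(\iota\) is still defined by the formula for \(\tau\). I then pass to the leaf \(C=\pm(\sqrt{-1})^m\) at the end. This is legitimate because the leaf is a Poisson subvariety, being cut out by Casimirs, and Proposition~\ref{prop:realpart} shows that \(\tau\) preserves it, so \(\iota\) preserves its ideal.

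Both sides of the identity are biderivations, in the sense that \((f,g)\mapsto \iota^{-1}\{\iota f,\iota g\}\) is again a \(\C\)-bilinear biderivation, because \(\iota\) is a ring automorphism and applying it twice restores linearity. It therefore suffices to verify the identity on the generators \(y_j\); the inverses \(y_j^{-1}\) then follow automatically.

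The key computation is explicit. From the definition of \(\tau\), we get \(\iota(y_j)=y_j^{-1}\) for \(j\) odd and \(\iota(y_j)=-y_j\) for \(j\) even. If \(j,k\) are not cyclically adjacent, both sides vanish. Since \(n=2m\) is even, cyclically adjacent indices always have opposite parity, including the pair \((2m,1)\). So take \(j\) odd and \(k\) even with \(\{y_j,y_k\}=s\,y_jy_k\), where \(s=\pm1\). Then
\[
\{\iota y_j,\iota y_k\}=\{y_j^{-1},-y_k\}=y_j^{-2}\{y_j,y_k\}=s\,y_k/y_j .
\]
On the other side,
\[
-\iota\{y_j,y_k\}=-s\,\iota(y_j)\iota(y_k)=-s\,y_j^{-1}(-y_k)=s\,y_k/y_j .
\]
The two agree, since \(s\) is real and is fixed by the conjugation.

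Equivalently, in logarithmic coordinates \(\iota\) negates \(\log y_{\mathrm{odd}}\) and shifts \(\log y_{\mathrm{even}}\) by a constant. Because every nonzero constant bracket pairs an odd index with an even one, the sign flips exactly once.

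The only point needing care, and the place where the factor \(\sqrt{-1}\) comes from, is the sign bookkeeping with antilinearity. It is precisely the anti-Poisson property of \(\iota\) that forces the rescaling of the bracket by \(\sqrt{-1}\). Everything else is the routine generator check above, followed by restriction to the leaf.
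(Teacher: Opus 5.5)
Your proof is correct and follows essentially the same route as the paper. You compute the induced antilinear involution on the generators ($y_i\mapsto y_i^{-1}$ for $i$ odd, $y_i\mapsto -y_i$ for $i$ even), verify on cyclically adjacent pairs that the bracket \eqref{eq:logcanonical} satisfies $\{\hat\tau f,\hat\tau g\}=-\hat\tau\{f,g\}$, extend to all functions via the Leibniz rule, and use antilinearity to absorb the sign into the factor $\sqrt{-1}$. Your extra care in working on the ambient torus and then restricting to the $\tau$-stable Poisson subvariety $C=\pm(\sqrt{-1})^m$ is a harmless elaboration of a step the paper leaves implicit.
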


\begin{proof}
Let
\(
\hat \tau(f)=\overline{\tau^*f}
\)
be the induced \(\C\)-antilinear involution on regular functions. It is
enough to check the statement on the coordinate functions. Recall that
\[
\{y_i,y_{i+1}\}=y_iy_{i+1},
\]
while all other brackets vanish. By the definition of \(\tau\),
\[
\hat \tau(y_i)=
\begin{cases}
y_i^{-1}, & i \text{ odd},\\
-y_i, & i \text{ even}.
\end{cases}
\]
Assume first that \(i\) is odd. Then
\[
\begin{gathered}
\{\hat \tau(y_i),\hat \tau(y_{i+1})\}
=\{y_i^{-1},-y_{i+1}\}
=y_i^{-2}\{y_i,y_{i+1}\} \\ 
=y_i^{-1}y_{i+1}
=-\hat \tau(y_iy_{i+1})
=-\hat \tau(\{y_i,y_{i+1}\}).
\end{gathered}
\]
The case of even \(i\) is analogous. For non-adjacent indices, both
sides vanish. Thus the identity
\[
\{\hat \tau(f),\hat \tau(g)\}=-\hat \tau(\{f,g\})
\]
holds for all coordinate functions \(f=y_i\), \(g=y_j\), and by the
Leibniz rule it extends to all regular functions. Therefore the original
Poisson bracket is anti-real with respect to \(\tau\). Since
\(\hat \tau(\sqrt{-1})=-\sqrt{-1}\), the bracket
\(\sqrt{-1}\{\cdot,\cdot\}\) is real.
\end{proof}
As a consequence of the proposition, the Poisson bracket
\(
\sqrt{-1}\{\cdot,\cdot\}
\)
on the symplectic leaf
\(
C=\pm (\sqrt{-1})^m
\)
induces a (nondegenerate) real Poisson bracket on its real part
\(\mathcal K_{2m}/\Aff(\mathbb C)\).

\begin{remark}
Writing
\[
y_i=
\begin{cases}
e^{\sqrt{-1}\phi_i}, & i\text{ odd},\\
\sqrt{-1}\,\ell_i, & i\text{ even},
\end{cases}
\]
we obtain a real parametrization of \(\mathcal K_{2m}/\Aff(\mathbb C)\). The parameters $\phi_i \in \mathbb R / 2\pi\Z$, $\ell_i \in \mathbb R^\times$ are subject to relations 
\[
\phi_1+\cdots+\phi_{2m-1}
=
\delta\frac{\pi m}{2},
\qquad
\ell_2\cdots\ell_{2m}
=
\delta(-1)^m,
\]
where \(\delta=\pm1\) is the sign in~\eqref{eq:leaf}. In these coordinates,
\[
\{\ell_{2k},\phi_{2k-1}\}
=
\{\ell_{2k},\phi_{2k+1}\}
=
\ell_{2k},
\]
and all other brackets vanish.
\end{remark}


\subsection{The action of the real structure on the integrals}

We now turn to the behaviour of the integrals
$I_j$
under the involution \(\tau\). In this section we show that,
although the functions \(I_j\) are not generally fixed by \(\tau\),
suitable combinations of them are, and use these combinations to obtain
first integrals on the multikite space \(\mathcal K_{2m}/\Aff(\mathbb C)\).

Let
\[
\mathbf Y(t)=Y_1(t)\cdots Y_{2m}(t),
\]
where the matrices \(Y_i(t)\) are given by \eqref{eq:ymatrix}, with
\(
\varepsilon_i=(-1)^i
\)
corresponding to staircase initial data.
Recall that
\[
\operatorname{tr}(\mathbf Y(t))
=
I_0+I_1t+\cdots+I_mt^m,
\]
where \(I_0=2\), and \(I_1,\dots,I_m\) are commuting independent first
integrals. 
\begin{proposition}\label{prop:intrestr}
On the symplectic leaf
\(
C=\pm(\sqrt{-1})^m,
\)
the antilinear involution \(\hat \tau\) on regular functions induced by the
real structure \(\tau\) satisfies
\begin{equation}\label{eq:sigmaformula}
\hat \tau(I_k)=C^{-1}\,I_{m-k},
\qquad
k=0,\dots,m. 
\end{equation}
\end{proposition}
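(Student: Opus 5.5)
The approach is to turn \eqref{eq:sigmaformula} into a single identity of polynomials in \(t\). On the leaf, the identity is equivalent to
\[
\hat\tau\bigl(\tr\mathbf Y(t)\bigr)=C^{-1}\,t^m\,\tr\mathbf Y(t^{-1}),
\]
where \(\hat\tau\) acts coefficientwise and \(t\) is treated as a real formal variable. This works because the entries of each \(Y_i(t)\) have integer coefficients in \(y_i\) and \(t\), so \(\hat\tau\) acts only through the substitution \(y_i\mapsto y_i^{-1}\) for odd \(i\) and \(y_i\mapsto -y_i\) for even \(i\). We have \(\varepsilon_i=(-1)^i\), so \(\varepsilon_{i-1}=1\) for odd \(i\) and \(\varepsilon_{i-1}=-1\) for even \(i\). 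The substituted matrices are therefore
\[
\hat\tau(Y_i)=\begin{pmatrix}1&t/y_i\\ 1&1/y_i\end{pmatrix}=y_i^{-1}\begin{pmatrix}y_i&t\\ y_i&1\end{pmatrix}\ (i\text{ odd}),\qquad
\hat\tau(Y_i)=\begin{pmatrix}1&y_it\\ 1&-y_i\end{pmatrix}\ (i\text{ even}).
\]
The scalar factors \(y_i^{-1}\) over odd \(i\) multiply to exactly \(C^{-1}\). This already explains the prefactor, and it remains to show that the trace of the product of the rescaled matrices equals \(t^m\tr\mathbf Y(t^{-1})\).

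The first route I would try is a local gauge identity. I look for constant matrices \(G_i\) (built from the antidiagonal swap and \(\operatorname{diag}(1,t)\)-type factors) and exponents \(e_i\in\{0,1\}\) with \(\sum e_i=m\) (presumably \(e_i=1\) for even \(i\)) such that each rescaled \(\hat\tau(Y_i)\) equals \(t^{e_i}G_i^{-1}Y_i(t^{-1})^{\ast}G_{i+1}\), indices cyclic. Here \(\ast\) is either the identity or transposition. In the transposed case the order of the product reverses, and I use \(\tr(A_1\cdots A_n)=\tr(A_n^T\cdots A_1^T)\). A cyclic shift of indices, which is harmless for the trace, may also be needed. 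Telescoping the \(G_i\) then gives the claimed identity. Preliminary matching already suggests this shape: conjugating \(\begin{pmatrix}y&t\\ y&1\end{pmatrix}\) by the swap gives \(\begin{pmatrix}1&y\\ t&y\end{pmatrix}\), which is close to \(Y(t^{-1})^T\) up to a \(\operatorname{diag}(1,t)\) factor.

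If no clean single-factor gauge exists, I would pair the factors as \(Y_{2k-1}Y_{2k}\) and look for the gauge at the level of these \(2\)-blocks. The block degree pattern from Proposition~\ref{prop:Z-form} makes a reflection \(t\mapsto t^{-1}\) with the factor \(t\) per block natural.

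As a fallback I would argue combinatorially. I would expand \(\tr\mathbf Y(t)\) as a sum over closed length-\(2m\) paths in the two-state transfer graph. Each path picks an entry of each \(Y_i\), and the picked entries carry weights \(1\), \(\varepsilon_{i-1}y_it\), or \(y_i\). The coefficient of \(t^k\) uses exactly \(k\) upper-right entries. I would then exhibit the involution on states that exchanges "\(t\)-steps" with "non-\(t\)-steps", so that \(k\) becomes \(m-k\). I would track signs and powers of \(y\), using \(y_1\cdots y_{2m}=1\) and \(C^{-1}=y_2y_4\cdots y_{2m}\). As a sanity check, the extreme cases \(k=0,m\) follow from the formula for \(I_m\) in Proposition~\ref{prop:Im-C}: there \(I_m=C+(-1)^mC^{-1}\), and on the leaf \(C^2=(-1)^m\), so \(\hat\tau(I_0)=2=C^{-1}I_m\) holds.

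The main obstacle is finding the correct gauge, or equivalently the correct bijection, and getting the signs \(\varepsilon_{i-1}\) right. The odd and even factors transform differently, and the leaf condition \(C^2=(-1)^m\) will likely be needed to absorb a leftover global sign. So I would verify the identity for \(m=1,2\) by hand before committing to the general telescoping.
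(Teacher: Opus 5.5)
Your reduction is correct, and your block fallback is the paper's argument. You rightly reduce the statement to \(\hat\tau(\tr\mathbf Y(t))=C^{-1}t^m\tr\mathbf Y(t^{-1})\), compute \(\hat\tau(Y_i)\) correctly, and note that the odd scalars produce \(C^{-1}\). The paper pairs \(\widetilde Y_k=Y_{2k-1}Y_{2k}\) and checks directly that \(t\,y_{2k-1}\,\hat\tau\bigl(\widetilde Y_k(t^{-1})\bigr)=D\widetilde Y_k(t)D^{-1}\) with \(D=\operatorname{diag}(1,t)\). It then multiplies over \(k\) and takes traces.

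The gap is that you never perform the one computation that constitutes the proof. You announce a search for the gauge (``the main obstacle is finding the correct gauge''), so the identity \(\tr\bigl(M_1(t)\cdots M_{2m}(t)\bigr)=t^m\tr\mathbf Y(t^{-1})\) for your rescaled matrices \(M_i\) is left unproved. The combinatorial fallback is not specified at all.

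The gap closes at once, and your first, single-factor ansatz already works. It needs no transposition, no cyclic shift and no leftover sign. With \(S=\begin{pmatrix}0&1\\1&0\end{pmatrix}\) and \(D=\operatorname{diag}(1,t)\), one verifies
\[
M_{2k-1}=\begin{pmatrix}y_{2k-1}&t\\ y_{2k-1}&1\end{pmatrix}=t\,D^{-1}Y_{2k-1}(t^{-1})\,S,
\qquad
M_{2k}=\begin{pmatrix}1&y_{2k}t\\ 1&-y_{2k}\end{pmatrix}=S\,Y_{2k}(t^{-1})\,D .
\]
Hence \(M_{2k-1}M_{2k}=t\,D^{-1}\widetilde Y_k(t^{-1})D\), which is the paper's block identity with \(S\cdot S\) inserted in the middle. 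Therefore \(\hat\tau(\mathbf Y(t))=C^{-1}t^mD^{-1}\mathbf Y(t^{-1})D\), and taking traces and comparing coefficients gives the claim.

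This identity already holds for Laurent polynomials in the \(y_i\), with \(\hat\tau\) acting as the substitution. The leaf condition \(C^2=(-1)^m\) is needed only so that \(\tau\) preserves \(y_1\cdots y_{2m}=1\) and the identity descends to the leaf, not to absorb a sign. Also, the transpose you anticipated plays no role: the relevant matrix is \(Y(t^{-1})\) itself.
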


\begin{remark}
Since \(I_0=2\) and \(\hat \tau\) acts trivially on real constants, for $k=0$ this gives
\[
I_m = C\hat \tau(I_0) = 2C, 
\]
recovering the already familiar identity~\eqref{eq:leaf2}.
\end{remark}

\begin{proof}[Proof of the proposition]
Write
\[
\mathbf Y(t)=\widetilde Y_1(t)\cdots \widetilde Y_m(t),
\qquad
\widetilde Y_k(t):=Y_{2k-1}(t)Y_{2k}(t).
\]
A direct computation gives
\[
\widetilde Y_k(t)=
\begin{pmatrix}
1+y_{2k-1}t
&
y_{2k}t(y_{2k-1}-1)
\\[2mm]
y_{2k-1}+1
&
y_{2k}(y_{2k-1}-t)
\end{pmatrix}.
\]
Using
\(
\hat \tau(y_{2k-1})=y_{2k-1}^{-1}
\) and \(
\hat \tau(y_{2k})=-y_{2k},
\)
we obtain
\[
t\,y_{2k-1}\,\hat \tau\!\left(\widetilde Y_k(t^{-1})\right)
=
\,D\widetilde Y_k(t)D^{-1},
\qquad
D=\operatorname{diag}(1,t).
\]
Multiplying these identities for \(k=1,\dots,m\), we get
\[
t^m C\,\hat \tau\!\left(\mathbf Y(t^{-1})\right)
=
D\mathbf Y(t)D^{-1}, 
\]
where
\(
C:=y_1y_3\cdots y_{2m-1}.
\)
Taking traces gives
\[
t^m C\,\hat \tau\!\left(\tr\mathbf Y(t^{-1})\right)
=
\tr\mathbf Y(t). 
\]
Substituting
\[
\tr\mathbf Y(t)
=
I_0+\cdots+I_mt^m
\]
and comparing coefficients, we obtain
\[
C\,\hat \tau(I_k)= I_{m-k}. \qedhere
\]
\end{proof}

Fix one of the two irreducible components of the symplectic leaf
$C=\pm(\sqrt{-1})^m$, and let
\[
V:=\mathrm{span}_{\C}\{I_1,\dots,I_{m-1}\},
\]
where the functions \(I_j\) are regarded as functions on that component.
By Proposition~\ref{prop:intrestr}, the space \(V\) is invariant under
the antilinear involution \(\hat\tau\). Therefore, \(V\) admits a basis
\(
J_1,\dots,J_{m-1}
\)
consisting of real functions, i.e.
\[
\hat\tau(J_k)=J_k,
\qquad
k=1,\dots,m-1.
\]
We call the functions \(J_1,\dots,J_{m-1}\) the \emph{real integrals}.


\begin{proposition}\label{prop:Jkfuncs}
The restrictions of the real integrals $J_k$ to the multikite
space \(\mathcal K_{2m}/\Aff(\mathbb C)\) pairwise Poisson commute. 
\end{proposition}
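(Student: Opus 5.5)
The plan is to derive Poisson commutativity of the restricted real integrals from Poisson commutativity of the complex integrals $I_j$ (Proposition~\ref{prop:integrals-commute}) together with the reality of the bracket $\sqrt{-1}\{\cdot,\cdot\}$ (Proposition~\ref{prop:realPoisson}). The first step works on the chosen irreducible component of the leaf $C=\pm(\sqrt{-1})^m$. Each $J_k$ is a $\C$-linear combination of $I_1,\dots,I_{m-1}$. Since the functions $I_j$ pairwise Poisson commute on $\mathcal P_{2m}/\Aff(\C)$, and the leaf is a Poisson subvariety, their restrictions still commute there. By bilinearity of the bracket, $\{J_k,J_l\}=0$ on the component, and hence also $\sqrt{-1}\{J_k,J_l\}=0$.

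The second step passes to the real part. The induced Poisson structure on $\mathcal K_{2m}/\Aff(\C)$ is defined by identifying its coordinate ring with the ring of real functions, i.e.\ those fixed by $\hat\tau$, and using the bracket $\sqrt{-1}\{\cdot,\cdot\}$. This bracket preserves real functions by Proposition~\ref{prop:realPoisson}. The $J_k$ are real by construction. So the bracket of their restrictions to the real locus is, by definition, the restriction of $\sqrt{-1}\{J_k,J_l\}$, which vanishes identically.

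The main subtlety is the relation between the fixed component and the real locus. One has to check that the fixed-point set of $\tau$ on that component, or on the union of components, is compatible with this choice. Concretely, $\tau$ should preserve each component: it fixes $C$, and the components are distinguished by the sign of $C$, so this should hold. The restriction-of-functions identification should then also be compatible with taking real parts. I would also verify that $V$ is $\hat\tau$-invariant on that component. Proposition~\ref{prop:intrestr} gives $\hat\tau(I_k)=C^{-1}I_{m-k}$, and $C$ is a constant there, so this follows. Once these checks are made, the argument is purely formal.
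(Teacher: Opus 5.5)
Your proposal is correct and follows the paper's argument: the \(J_k\) are linear combinations of the \(I_j\), which Poisson commute on the leaf, so the \(J_k\) commute there; since they are real for the real bracket \(\sqrt{-1}\{\cdot,\cdot\}\), their restrictions to \(\mathcal K_{2m}/\Aff(\mathbb C)\) commute. Your additional checks — that \(\tau\) preserves each component because \(\tau^*C=\overline C^{-1}=C\), that \(V\) is \(\hat\tau\)-invariant, and the explicit appeal to Proposition~\ref{prop:realPoisson} — only spell out what the paper's two-line proof leaves implicit.
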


\begin{proof}

Since the functions \(I_1,\dots,I_{m-1}\) pairwise Poisson commute on the
complex symplectic leaf, any linear combinations of them also pairwise
Poisson commute. Hence the functions \(J_k\) pairwise commute. Restricting
to the real part \(\mathcal K_{2m} /\Aff(\mathbb C)\) gives commuting functions on the
multikite space.
\end{proof}
\begin{remark}
One possible choice of the real integrals is
\[
J_k=\alpha I_k+\overline{\alpha}\,C^{-1}I_{m-k},
\]
where \(\alpha\in\C^\times\) satisfies
\[
\left(\frac{\overline{\alpha}}{\alpha}\right)^2\neq(-1)^m.
\]
Indeed, \(J_k\) is a real function, and the resulting linear
transformation
\[
(I_1,\dots,I_{m-1})
\longmapsto
(J_1,\dots,J_{m-1})
\]
is invertible.
\end{remark}

\subsection{Independence of the restricted integrals}
In the previous section, we constructed \(m-1\) pairwise
Poisson-commuting functions
\(
J_1,\dots,J_{m-1}
\)
on the symplectic variety
\(
\mathcal K_{2m}/\Aff(\mathbb C),
\)
which has dimension \(2m-2\). Thus, to establish complete
integrability, it remains to prove that these functions are
algebraically independent.

Since the multikite space
\(
\mathcal K_{2m}/\Aff(\mathbb C)
\)
is the real locus of the symplectic leaf~\eqref{eq:leaf},
algebraic independence of the functions \(J_k\) on
the mutlikite space 
is equivalent to their independence on the leaf. Moreover, since the
linear transformation
\[
(I_1,\dots,I_{m-1})
\longmapsto
(J_1,\dots,J_{m-1})
\]
is invertible, it is enough to prove that the restrictions of
\(
I_1,\dots,I_{m-1}
\)
to the leaf~\eqref{eq:leaf} are algebraically independent.

Although the functions \(I_1,\dots,I_{m-1}\) are already known to be
independent on
\(
\mathcal P_{2m}/\Aff(\mathbb C),
\)
this does not immediately imply their independence on the symplectic
leaf. The remainder of this section is devoted to proving this fact,
thereby completing the proof of Theorem~\ref{thm2}.

It is enough to prove that the restrictions of the integrals
\(
I_1,\dots,I_{m-1}
\)
to each irreducible component of the symplectic leaf
~\eqref{eq:leaf}
are algebraically independent. Accordingly, we fix
\(
\nu\in\{\pm1\}
\)
and work on the component
\(
C=\nu(\sqrt{-1})^m.
\)
This component is equivalently described by the
equation
\[
I_m=2\nu(\sqrt{-1})^m, 
\]
see Remark \ref{rem:leaf2}. 
Following Section~\ref{sec:iqp}, we will prove independence on the
quotient by translations
\(
\mathcal P_{2m}/\C.
\)
In terms of the coordinates
\(
z_i=p_i-p_{i-1}
\)
on that space, the Casimir $C$ is given by
\[
C
=
\frac{z_1z_3\cdots z_{2m-1}}
     {z_2z_4\cdots z_{2m}}.
\]
Therefore, it suffices to prove the following.
\begin{proposition}\label{prop:I-dominant-leaf}
Let
\[
H:=
\left\{
(z_1,\dots,z_n)\in(\C^\times)^n
\;\middle|\;
\frac{z_1z_3\cdots z_{2m-1}}
     {z_2z_4\cdots z_{2m}}
=
\nu(\sqrt{-1})^m
\right\}.
\]
Then the map
\[
H\longrightarrow \C^{m-1},
\qquad
(z_1,\dots,z_n)
\longmapsto
(I_1,\dots,I_{m-1}),
\]
where the functions \(I_j\) are defined as in
Proposition~\ref{prop:I-dominant}, with
\(
\varepsilon_i=(-1)^i,
\)
is dominant.
\end{proposition}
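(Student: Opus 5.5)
We follow the strategy of Section~\ref{sec:iqp}: factor the map $H\to\C^{m-1}$ as the product map $H\to\mathcal Z_n$ followed by the trace map. The difference is that the target of the product map must be replaced by the subvariety
\[
\mathcal Z_n^{\nu}:=\{\mathbf Z(t)\in\mathcal Z_n \mid \text{coefficient of } t^m \text{ in } \tr\mathbf Z(t) \text{ equals } 2\nu(\sqrt{-1})^m\}.
\]
By Proposition~\ref{prop:Im-C} (equivalently Remark~\ref{rem:leaf2}) the product map sends $H$ into $\mathcal Z_n^\nu$, since $C=\nu(\sqrt{-1})^m$ forces $I_m=2\nu(\sqrt{-1})^m$. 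Here $n=2m$ and $\varepsilon_i=(-1)^i$, so $Q(t)=(1-t^2)^m$. The proof then splits into two steps.

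The first step is to show that the trace map $\mathcal Z_n^\nu\to\C^{m-1}$, $\mathbf Z\mapsto(I_1,\dots,I_{m-1})$, is surjective. We repeat the proof of Lemma~\ref{lem:trace-surjectivity} for $n=2m$ with $T(t)=2+I_1t+\dots+I_{m-1}t^{m-1}+I_mt^m$ and $I_m=2\nu(\sqrt{-1})^m$ fixed. We must solve $c(I_m-c)=\varepsilon=(-1)^m$. With $I_m=2\nu(\sqrt{-1})^m$ this becomes $(c-\nu(\sqrt{-1})^m)^2=0$, which has the double root $c=\nu(\sqrt{-1})^m$. The rest of the argument (factoring $AD-Q=BC$ with the stated degree bounds) goes through verbatim. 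So surjectivity holds for arbitrary $I_1,\dots,I_{m-1}$.

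The second step, which is the main one, is to show that the product map $H\to\mathcal Z_n^\nu$ is dominant. Lemma~\ref{lem:openness} already shows that the factorizable locus is Zariski open in $\mathcal Z_n^\circ$. Intersecting with the closed subvariety $\mathcal Z_n^{\nu}$ gives that the factorizable elements of $\mathcal Z_n^{\nu,\circ}$ form an open subset of it. Any factorization $\mathbf Z=Z_1\cdots Z_n$ with $\mathbf Z\in\mathcal Z_n^\nu$ has $I_m=2\nu(\sqrt{-1})^m$, hence $C+(-1)^mC^{-1}=2\nu(\sqrt{-1})^m$. Its only root is $C=\nu(\sqrt{-1})^m$, so $(z_i)\in H$ automatically. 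It therefore remains to show that the factorizable locus meets every irreducible component of $\mathcal Z_n^{\nu,\circ}$, or at least a component that dominates $\C^{m-1}$ under the trace map.

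This is where I expect the difficulty. $\mathcal Z_n^\nu$ may be reducible, since the double root above hints at non-reducedness or branching. The safest route is to avoid global irreducibility and instead compare dimensions. We would show that the differential of $(I_1,\dots,I_{m-1})$ restricted to $H$ has rank $m-1$ at a single explicit point. Candidates are a nearly degenerate configuration, or an inductive choice as in Lemma~\ref{lem:nonemptiness} where $z_{2m-1},z_{2m}$ are appended while preserving the constraint defining $H$. A cleaner alternative is induction on $m$. Multiplying a factorizable element of $\mathcal Z_{2m-2}$ on the right by $Z_{2m-1}Z_{2m}$, with the ratio $z_{2m-1}/z_{2m}$ chosen to fix $C$, one checks that $I_{m-1}$ depends nontrivially on the remaining free parameter $z_{2m}$ while the lower integrals pick up only triangular corrections. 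This yields generic rank $m-1$, hence dominance.
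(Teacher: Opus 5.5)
There is a genuine gap: the step that carries the proof, dominance of the product map $H\to\widetilde{\mathcal Z}_n$, is never established. The setup is right and matches the paper. You factor through $\widetilde{\mathcal Z}_n=\{\mathbf Z\in\mathcal Z_n\mid I_m=2\nu(\sqrt{-1})^m\}$ and show the trace map to $\C^{m-1}$ is surjective. The paper simply restricts the surjection of Lemma~\ref{lem:trace-surjectivity} to a fiber of $I_m$; your double-root computation is a correct explicit version of that. You then use Lemma~\ref{lem:openness} to see that the factorizable locus is open in $\widetilde{\mathcal Z}_n^\circ$.

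Openness is useless until you know the factorizable locus in $\widetilde{\mathcal Z}_n^\circ$ is nonempty, i.e.\ that some $z\in H$ has $\mathbf Z(1)\neq0$ and $\mathbf Z(-1)\neq0$. This does not follow from Lemma~\ref{lem:nonemptiness}: there the last edge $z_n$ is chosen freely, and a nonempty open subset of $(\C^\times)^n$ need not meet the hypersurface $H$. The paper supplies exactly this as Lemma~\ref{lem:nonemptiness-leaf}:
\begin{itemize}
\item $H\cong(\C^\times)^{n-1}$ via $(z_2,\dots,z_n)$.
\item Since $\varepsilon_1=-1$, $Z_1(1)$ is invertible, so $\mathbf Z(1)\neq0$ iff $Z_2(1)\cdots Z_n(1)\neq0$. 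This is a nonempty open condition by Lemma~\ref{lem:nonemptiness} for $n-1$ factors.
\item Symmetrically, $Z_n(-1)$ is invertible because $\varepsilon_n=1$, giving a second nonempty open condition for $t=-1$.
\item Two nonempty open subsets of the irreducible $H$ intersect.
\end{itemize}
Your idea of appending $Z_{2m-1}Z_{2m}$ with $z_{2m-1}=rz_{2m}$, $r=\pm\sqrt{-1}$, would also give nonemptiness, since each of $t=\pm1$ then excludes at most one value of $z_{2m}$. But you neither carry this out nor connect it to the openness argument.

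Instead you announce a Jacobian-rank induction, and its key claim is not right as stated. Write the old monodromy in $\mathcal Z_{2m-2}$ with coefficients $a_k,b_k,c_k,d_k$ as in the definition of $\mathcal Z_n$, so $a_0=d_0=1$. A direct computation gives, for every $k$,
\[
I_k^{\mathrm{new}}=I_k^{\mathrm{old}}+r\,d_{k-1}-r^{-1}a_{k-1}+(1+r)\,z_{2m}\,b_k+(1-r^{-1})\,z_{2m}^{-1}\,c_{k-1}.
\]
So in general all the new integrals, not only $I_{m-1}$, depend on $z_{2m}$. The corrections also involve individual entries of the old matrix, not just the old integrals. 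Hence the Jacobian is not block-triangular with the old Jacobian as a block, and the inductive hypothesis does not by itself give rank $m-1$.

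Your reducibility concern is not baseless. The paper's passage from ``nonempty open subset of $\widetilde{\mathcal Z}_n^\circ$'' to ``dominant'' tacitly treats $\widetilde{\mathcal Z}_n$ as irreducible. A completed rank computation at one point of $H$ would sidestep this and so be a stronger argument. As written, however, neither the nonemptiness-plus-openness route nor the rank alternative is completed, so dominance of $H\to\C^{m-1}$ is not established.
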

Keeping the notation of Section~\ref{sec:iqp}, let
\[
\widetilde{\mathcal Z}_n
:=
\left\{
\mathbf Z(t)\in\mathcal Z_n
\;\middle|\;
I_m=2\nu(\sqrt{-1})^m
\right\}, 
\]
where \(I_m\) denotes the leading coefficient of
\(\tr\mathbf Z(t)\), and let
\[
\widetilde{\mathcal Z}_n^\circ
:=
\widetilde{\mathcal Z}_n\cap\mathcal Z_n^\circ.
\]
The factorizable locus in \(\widetilde{\mathcal Z}_n^\circ\) is the set
of matrices \(\mathbf Z(t)\in\widetilde{\mathcal Z}_n^\circ\) admitting
a factorization
\[
\mathbf Z(t)=Z_1(t)\cdots Z_n(t),
\]
where \(Z_i(t)\) is given by \eqref{eq:zmatrix} with
\(
\varepsilon_i=(-1)^i,
\)
for some
\[
z = (z_1,\dots,z_n)\in(\C^\times)^n.
\]
By the definition of \(\widetilde{\mathcal Z}_n\), such a point \(z\)
necessarily belongs to \(H\).

\begin{lemma}\label{lem:nonemptiness-leaf}
The factorizable locus in
\(\widetilde{\mathcal Z}_n^\circ\)
is nonempty.
\end{lemma}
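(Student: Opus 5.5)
We need a point $z\in(\C^\times)^n$ with $n=2m$, lying in $H$, such that $\mathbf Z(t)=Z_1(t)\cdots Z_n(t)$ satisfies $\mathbf Z(\pm1)\neq0$. Such a $\mathbf Z$ is automatically factorizable, and by Proposition~\ref{prop:Im-C} it has $I_m=2\nu(\sqrt{-1})^m$ (the relation $I_m=C+(-1)^mC^{-1}$ holds equally in the $z$-coordinates). So the plan is to exhibit one point of $H$ and then check the nonvanishing at $t=\pm1$ there.

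The first step is to produce an explicit point of $H$: take $z_1=\nu(\sqrt{-1})^m$ and all other $z_i=1$. Then the defining ratio of $H$ equals $\nu(\sqrt{-1})^m$, so $z\in H$.

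To secure $\mathbf Z(\pm1)\neq0$, I would not rely on that particular point. I would instead use the rescaling freedom that preserves $H$, namely $z_{2k-1}\mapsto s_kz_{2k-1}$, $z_{2k}\mapsto s_kz_{2k}$ for arbitrary $s_k\in\C^\times$; this leaves the ratio unchanged. The argument then runs along the lines of Lemma~\ref{lem:nonemptiness}, treating the factors in consecutive pairs $(Z_{2k-1},Z_{2k})$. Suppose inductively that $\mathbf W(t):=Z_1\cdots Z_{2k-2}$ satisfies $\mathbf W(\pm1)\neq0$. Append $Z_{2k-1}Z_{2k}$ with $z_{2k-1}=s a$ and $z_{2k}=s b$ for a fixed ratio $a/b$, and let $s$ vary.

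- At $t=1$: since $\varepsilon_{2k-1}=-1$, the matrix $Z_{2k-1}(1)$ is invertible, while $Z_{2k}(1)$ has rank one with image spanned by $(1,z_{2k})^T$. Hence $\mathbf W(1)Z_{2k-1}(1)Z_{2k}(1)=0$ iff $Z_{2k-1}(1)(1,sb)^T\in\ker\mathbf W(1)$. This is a nontrivial condition on $s$ excluding finitely many values (in fact at most one), unless it holds identically, which I would rule out by checking that $Z_{2k-1}(1)(1,sb)^T$ is a non-constant line as $s$ varies.
- At $t=-1$: the situation is symmetric, with $Z_{2k-1}(-1)$ of rank one and $Z_{2k}(-1)$ invertible. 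Now $\mathbf W(-1)Z_{2k-1}(-1)\neq0$ excludes at most one value of $z_{2k-1}$, by exactly the argument of Lemma~\ref{lem:nonemptiness}. Since $Z_{2k}(-1)$ is invertible, nonvanishing persists after appending it.

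The base case $k=0$ is the identity. Choosing $s$ outside these finitely many exceptional values gives the inductive step, and after $m$ steps we obtain a factorizable $\mathbf Z\in\widetilde{\mathcal Z}_n^\circ$.

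The main obstacle is the $t=1$ case: showing the excluded condition on $s$ is genuinely nontrivial. One has
\[
Z_{2k-1}(1)\begin{pmatrix}1\\ sb\end{pmatrix}=\begin{pmatrix}1-b/a\\ sa+sb\end{pmatrix},
\]
which, provided $a\neq b$ and $a\neq-b$, traces distinct lines as $s$ varies. So $s$ is obstructed by at most one value, and choosing $a/b\notin\{\pm1\}$ suffices.
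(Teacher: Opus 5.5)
Your argument is correct, but it takes a different route from the paper. The paper never constructs a point by hand. It observes that $H$ projects isomorphically onto $(\C^\times)^{n-1}$ by forgetting $z_1$, or alternatively $z_n$. Since $Z_1(1)$ and $Z_n(-1)$ are invertible, each of the conditions $\mathbf Z(1)\neq0$ and $\mathbf Z(-1)\neq0$ separately reduces to the unconstrained condition for a product of $n-1$ factors. Each such condition defines a nonempty Zariski open subset of $H$ by Lemma~\ref{lem:nonemptiness}, and irreducibility of $H$ forces the two open sets to meet.

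You instead run an explicit $m$-step induction over the pairs $(Z_{2k-1},Z_{2k})$. You use the $(\C^\times)^m$-action $(z_{2k-1},z_{2k})\mapsto(s_kz_{2k-1},s_kz_{2k})$, which preserves $H$, and at each step you exclude at most two values of $s_k$, one from $t=1$ and one from $t=-1$. Your computations at $t=\pm1$ are all right: which factor of the pair is invertible, the images $(1,z)^T$ of the rank-one factors, and the family of lines $[1-b/a:s(a+b)]$.

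The paper's route is shorter and reuses the earlier lemma as a black box, but it only yields existence via irreducibility. Yours is self-contained and effectively algorithmic. The price is extra bookkeeping at $t=1$, where the invertible factor and the rank-one factor are rescaled together.

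One point needs fixing. The explicit point you write down first ($z_1=\nu(\sqrt{-1})^m$, all other $z_i=1$) has pair ratios $z_{2k-1}/z_{2k}=1$ for $k\ge2$, which violates your own requirement $a/b\notin\{\pm1\}$. With ratio $1$, the vector $Z_{2k-1}(1)(1,sb)^T$ always spans the fixed line $[0:1]$. So if $\ker\mathbf W(1)$ happens to be that line, no choice of $s$ helps.

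You should instead fix ratios $r_k=z_{2k-1}/z_{2k}\notin\{\pm1\}$ with $r_1\cdots r_m=\nu(\sqrt{-1})^m$. For example, take $r_1=\cdots=r_{m-1}=2$ and $r_m=\nu(\sqrt{-1})^m2^{1-m}$, or $r_1=\nu\sqrt{-1}$ when $m=1$. With that choice the induction goes through as you describe.
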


\begin{proof}
We need to show that there exists a point
\(
(z_1,\dots,z_n)\in H
\)
such that
\(
\mathbf Z(\pm1)\neq0.
\)
For such a point, we then have~\(\mathbf Z(t) \in \widetilde{\mathcal Z}_n^\circ\).

Since \(\varepsilon_1=-1\), the matrix \(Z_1(1)\) is invertible. Hence
\[
\mathbf Z(1)\neq0
\iff
Z_2(1)\cdots Z_n(1)\neq0.
\]
In other words, the subset of \(H\) on which
\(\mathbf Z(1)\neq0\) is the preimage of
\begin{equation}\label{eq:prod2n}
\{(z_2,\dots,z_n)\in(\C^\times)^{n-1}
\mid
Z_2(1)\cdots Z_n(1)\neq0\} \subset (\C^\times)^{n-1}
\end{equation}
under the isomorphism
\[
H\to(\C^\times)^{n-1},
\qquad
(z_1,\dots,z_n)\longmapsto(z_2,\dots,z_n).
\]
The set \eqref{eq:prod2n} is Zariski open by definition and nonempty by
Lemma~\ref{lem:nonemptiness}. Hence the subset of \(H\) on which
\(\mathbf Z(1)\neq0\) is also nonempty and Zariski open.

Similarly, since \(\varepsilon_n=1\) and hence the matrix \(Z_n(-1)\) is
invertible, the subset of \(H\) on which
\(\mathbf Z(-1)\neq0\) is also nonempty and Zariski open. Thus, the
subset of \(H\) on which
\(\mathbf Z(\pm1)\neq0\) is the intersection of two nonempty Zariski
open subsets. Since \(H\) is irreducible, this intersection must be
nonempty.
\end{proof}



We are now in a position to prove
Proposition~\ref{prop:I-dominant-leaf}.

\begin{proof}[Proof of Proposition~\ref{prop:I-dominant-leaf}]
The map
\[
H
\longrightarrow
\C^{m-1},
\qquad
(z_1,\dots,z_n)
\longmapsto
(I_1,\dots,I_{m-1})
\]
is the composition of the following two maps:
\begin{align}
\mathrm{product}\colon\quad
H &\longrightarrow \widetilde{\mathcal Z}_n,
&
(z_1,\dots,z_n)
&\longmapsto
Z_1(t)\cdots Z_n(t), \label{eq:product-map}\\
\mathrm{trace}\colon\quad
\widetilde{\mathcal Z}_n &\longrightarrow \C^{m-1},
&
\mathbf Z(t)
&\longmapsto
(I_1,\dots,I_{m-1}). \label{eq:trace-map}
\end{align}

By Lemma~\ref{lem:openness}, the factorizable locus in
\(\mathcal Z_n^\circ\) is Zariski open. Hence its intersection with
\(\widetilde{\mathcal Z}_n^\circ\), which is precisely the factorizable
locus in \(\widetilde{\mathcal Z}_n^\circ\), is Zariski open in
\(\widetilde{\mathcal Z}_n^\circ\). Furthermore, by
Lemma~\ref{lem:nonemptiness-leaf}, this locus is nonempty. Thus the
image of the product map~\eqref{eq:product-map} contains a nonempty
Zariski open subset of \(\widetilde{\mathcal Z}_n^\circ\), and hence of
\(\widetilde{\mathcal Z}_n\). Therefore the product map is dominant.

The trace map~\eqref{eq:trace-map} is the restriction of the
map
\[
\mathcal Z_n
\longrightarrow
\C^m,
\qquad
\mathbf Z(t)\longmapsto(I_1,\dots,I_m),
\]
which is surjective by Lemma~\ref{lem:trace-surjectivity}, to
\[
\widetilde{\mathcal Z}_n
=
\{\mathbf Z(t)\in\mathcal Z_n
\mid
I_m=2\nu(\sqrt{-1})^m\}.
\]
Hence the trace map~\eqref{eq:trace-map} is also surjective, and
therefore dominant.

Since the composition of dominant maps is dominant, the map
\[
H
\longrightarrow
\C^{m-1},
\qquad
(z_1,\dots,z_n)
\longmapsto
(I_1,\dots,I_{m-1})
\]
is dominant.
\end{proof}

\subsection{Complete integrability of the Cauchy problem (Proof of Theorem~\ref{thm2})}

\begin{proof}[Proof of Theorem~\ref{thm2}]
By Proposition~\ref{prop:realpart}, the multikite space
\(\mathcal K_{2m}/\Aff(\mathbb C)\) is the real part of the symplectic leaf
\[
\{\,C=\pm(\sqrt{-1})^m\,\}
\subset
\mathcal P_{2m} /\Aff(\mathbb C).
\]
By Proposition~\ref{prop:realPoisson}, the Poisson structure
\(
\sqrt{-1}\{\cdot,\cdot\}
\)
is real with respect to the corresponding real structure. Since the
underlying complex Poisson structure is nondegenerate on the symplectic
leaf, its restriction to the real part is also nondegenerate. Thus
\(\mathcal K_{2m}/\Aff(\mathbb C)\) is a real symplectic variety.

By Proposition~\ref{prop:Jkfuncs}, the  restrictions of the real integrals $J_1, \dots, J_{m-1}$ to \(\mathcal K_{2m} /\Aff(\mathbb C)\) pairwise
Poisson commute. By
Proposition~\ref{prop:I-dominant-leaf}, they are algebraically
independent. Since
\[
m-1=\frac12\dim\mathcal K_{2m}/\Aff(\mathbb C),
\]
they form a complete set of commuting first integrals.

Finally, the solution map \(\mathcal S\) preserves the functions
\(I_1,\dots,I_{m-1}\), and hence also the functions
\(J_1,\dots,J_{m-1}\). Therefore the restricted map
\begin{equation}
\mathcal S|_{\mathcal K_n/\Aff(\C)}\colon
\mathcal K_n/\Aff(\C)
\dashrightarrow
\mathcal K_n/\Aff(\C).
\end{equation}
is a completely integrable symplectic map.
\end{proof}

\section{Open problems}
A longstanding problem in the theory of discrete integrable systems is to
establish a precise relationship between integrability in the sense of
multidimensional consistency and Arnold--Liouville integrability. In the
present paper we resolve this problem for the cross-ratio equation, one of
the most fundamental examples of a multidimensionally consistent
quad-equation. Nevertheless, the cross-ratio equation represents only one
member of the Adler--Bobenko--Suris classification~\cite{adler2003classification}.

\begin{problem}
Establish Arnold--Liouville integrability of initial value problems for the
remaining multidimensionally consistent quad-equations in the Adler--Bobenko--Suris
classification.
\end{problem}

Of particular importance is the elliptic equation $Q4$, which occupies the
top level of the Adler--Bobenko--Suris hierarchy, with all other equations arising from it
via suitable degeneration procedures.

A particular instance of the previous problem is the
following.

\begin{problem}
Extend the results of this paper to the most general multidimensionally
consistent cross-ratio equation
\[
[f_{i,j},f_{i+1,j},f_{i+1,j+1},f_{i,j+1}]
=\frac{\alpha_i}{\beta_j},
\]
where $\alpha_i$ and $\beta_j$ are prescribed sequences in
$\mathbb C^\times$.
\end{problem}

The above equation in particular governs square grid circle patterns with
prescribed intersection angles, generalizing the orthogonal circle
patterns studied in the present paper; see
~\cite[Chapter~8]{BobenkoSuris2008}.

\begin{problem}
Extend the results of this paper from orthogonal square grid circle
patterns to square grid circle patterns with prescribed intersection
angles.
\end{problem}

The next problem concerns periodic versions of the systems considered in
this paper. Instead of the quasi-periodicity condition
\[
f_{x+\per}=f_x+\Delta,
\]
where $\per$ is a period vector and $\Delta\in\mathbb C$ is the monodromy, one
may consider the purely periodic case
\[
f_{x+\per}=f_x.
\]
In this setting, the corresponding spaces of initial data for discrete
conformal maps and orthogonal square grid circle patterns carry a natural
action of the full M\"obius group $\PGL_2(\mathbb C)$, and the
corresponding solution maps descend to the quotients by this action.
However, our approach does not extend directly to the periodic setting:
the embedding of the space of initial data for orthogonal square grid
circle patterns into the space of initial data for discrete conformal maps
is defined using circle centers and is therefore not M\"obius equivariant.

\begin{problem}
Study the initial value problems for periodic discrete conformal maps and
periodic orthogonal square grid circle patterns, and establish
Arnold--Liouville integrability of the corresponding solution maps on the
quotients of the spaces of initial data by the M\"obius group.
\end{problem}

Finally, it would also be interesting to compare the symplectic structure
on the space of initial data for orthogonal square grid circle patterns
constructed in this paper with previously known symplectic structures on
spaces of circle patterns.

\begin{problem}
Determine the relationship between the symplectic structure on the space
of initial data for orthogonal square grid circle patterns constructed in
this paper and Lam's symplectic structure on spaces of circle patterns
arising from Teichm\"uller theory~\cite{lam2024symplectic}.
\end{problem}

\bibliographystyle{plain}
\bibliography{main.bib}

@article{nijhoff1995discrete,
  title={The discrete {K}orteweg-de {V}ries equation},
  author={Nijhoff, F. and Capel, H.},
  journal={Acta Appl. Math.},
  volume={39},
  pages={133--158},
  year={1995},
  publisher={Springer}
}

@article{bobenko2005linear,
  title={Linear and nonlinear theories of discrete analytic functions. {I}ntegrable structure and isomonodromic {G}reen’s function},
  author={Bobenko, A. and Mercat, C. and Suris, Y.},
  year={2005},
  journal = {J. Reine Angew. Math.},
  volume = {583},
  pages = {117--161}
}

@article{Izmestiev2023,
  author = {Izmestiev, I.},
  title = {Deformation of quadrilaterals and addition on elliptic curves},
  journal = {Mosc. Math. J.},
  volume = {23},
  pages = {205--242},
  year = {2023}
}

@article{bobenko1996discrete,
  title={Discrete isothermic surfaces.},
  author={Bobenko, A. and Pinkall, U.},
  year={1996},
 journal = {J. Reine Angew. Math.},
 volume = {475}, 
 pages = {187--208}
}

@article{hetrich2001periodic,
  title={Periodic discrete conformal maps},
  author={Hetrich-Jeromin, U. and McIntosh, I. and  Norman, P. and Pedit, F.},
  year={2001},
  volume = {534},
journal = {J. Reine Angew. Math.},
  pages = {129--153}
}

@incollection{bobenko1999discrete,
  title={Discrete conformal maps and surfaces},
  author={Bobenko, A.},
  booktitle={Symmetries and Integrability of Difference equations (Canterbury, 1996)},
  editor = {Clarkson, P.A.  and Nijhoff,  F.W. },
  series = {London Mathematical Society Lecture Note Series},
  pages={97--108},
  year={1999},
  publisher={Cambridge University Press},
  volume = {255}
}

@article{schramm1997circle,
  title={Circle patterns with the combinatorics of the square grid},
  author={Schramm, O.},
  year={1997},
  journal = {Duke Math. J.},
  volume = {86},
  number = {2},
  pages = {347-389}
}

@article{bobenko2002integrable,
  title={Integrable systems on quad-graphs},
  author={Bobenko, A. and Suris, Y.},
  journal={Int. Mat. Res. Not.},
  volume={2002},
  number={11},
  pages={573--611},
  year={2002},
  publisher={Hindawi Publishing Corporation}
}

@article{arnold2022cross,
  title={Cross-ratio dynamics on ideal polygons},
  author={Arnold, M. and Fuchs, D. and Izmestiev, I. and Tabachnikov, S.},
  journal={Int. Math. Res. Not.},
  volume={2022},
  number={9},
  pages={6770--6853},
  year={2022},
  publisher={Oxford University Press}
}

@article{izosimov2022pentagram,
  author  = {Izosimov, A.},
  title   = {Pentagram maps and refactorization in Poisson-Lie groups},
  journal = {Adv. Math.},
  volume  = {404},
  year    = {2022},
  pages   = {108476},
  doi     = {10.1016/j.aim.2022.108476},
  eprint  = {arXiv:1803.00726}
}

@article{adler2003classification,
  author  = {V. Adler and A. Bobenko and Y. Suris},
  title   = {Classification of Integrable Equations on Quad-Graphs. The Consistency Approach},
  journal = {Comm. Math. Phys.},
  volume  = {233},
  number  = {3},
  pages    = {513--543},
  year     = {2003}
}

@misc{lam2024symplectic,
  author        = {Lam, W.~Y.},
  title         = {Space of circle patterns on tori and its symplectic form},
  year          = {2024},
  eprint        = {2406.06733},
  archivePrefix = {arXiv},
  primaryClass  = {math.GT},
  note          = {Preprint, arXiv:2406.06733}
}

@book{BobenkoSuris2008,
  author    = {A. Bobenko and Y. Suris},
  title     = {Discrete Differential Geometry: Integrable Structure},
  series    = {Graduate Studies in Mathematics},
  volume     = {98},
  publisher = {American Mathematical Society},
  address   = {Providence, RI},
  year      = {2008},
  isbn       = {978-0-8218-4700-8}
}

@misc{DragovicRadnovic2025,
  author        = {Dragovi\'c, V. and Radnovi\'c, M.},
  title         = {Finite groups of random walks in the quarter plane and periodic 4-bar links},
  year          = {2025},
  eprint        = {2512.21976},
  archivePrefix = {arXiv},
  primaryClass  = {math.DS},
  note          = {Preprint, arXiv:2512.21976}
}

@article{adler2004cauchy,
  title={Cauchy problem for integrable discrete equations on quad-graphs},
  author={Adler, V. and Veselov, A.},
  journal={Acta Appl. Math.},
  volume={84},
  pages={237--262},
  year={2004},
  publisher={Springer}
}

@book{mangolte2020real,
  title={Real algebraic varieties},
  author={Mangolte, F.},
  year={2020},
  publisher={Springer}
}

@incollection {Coxeterquiver_Tomas,
    AUTHOR = {Thomas, H.},
     TITLE = {Coxeter groups and quiver representations},
 BOOKTITLE = {Surveys in representation theory of algebras},
    SERIES = {Contemp. Math.},
    VOLUME = {716},
     PAGES = {173--186},
 PUBLISHER = {Amer. Math. Soc},
      YEAR = {2018},
      ISBN = {978-1-4704-3679-7},
   MRCLASS = {16G20 (20F55)},
  MRNUMBER = {3852402},
MRREVIEWER = {Mar\'{\i}a\ Julia\ Redondo},
       DOI = {10.1090/conm/716/14429},
       URL = {https://doi.org/10.1090/conm/716/14429},
}
\end{document}